\newcommand{\obar}[1]{\overline{#1}}
\newcommand{\ubar}[1]{\underline{#1}}
\numberwithin{equation}{section}
\definecolor{am}{RGB}{0,101,189}
\newtheorem{assumption}[theorem]{Assumption}
\definecolor{mygreen}{rgb}{0,.5,0}
\newcommand{\be}{\begin{equation}}
\newcommand{\ee}{\end{equation}}
\newcommand{\bee}{\begin{equation*}}
\newcommand{\eee}{\end{equation*}}
\newcommand{\bea}{\begin{eqnarray}}
\newcommand{\eea}{\end{eqnarray}}
\newcommand{\beaa}{\begin{eqnarray*}}
\newcommand{\eeaa}{\end{eqnarray*}}
\newcommand{\R}{\mathbb{R}}
\newcommand{\C}{\mathbb{C}}
\newcommand{\tr}{\mathrm{tr}}
\newcommand{\iprod}[2]{ \left\langle #1, #2 \right\rangle}
\newcommand{\half}{\frac{1}{2}}
\newcommand{\Tcal}{\mathcal{T}}
\newcommand{\sym}{{\mathrm{sym}}}
\newcommand{\diag}{\mathrm{diag}}
\newcommand{\M}{\mathcal{M}}
\newcommand{\Mcal}{\mathcal{M}}
\newcommand{\N}{{\mathbb{N}}}
\newcommand{\conv}{\mathrm{conv}}
\newcommand{\kg}{\kappa_g}
\newcommand{\kH}{\kappa_H}
\newcommand{\E}{\mathbf{E}}
\newcommand{\var}{\mathrm{Var}}
\newcommand{\BN}{\mathrm{BN}}
\newcommand{\bP}{\mathbf{P}_{\TM}}
\newcommand{\bPk}{\mathbf{P}_{T_{x_k} \Mcal}}
\newcommand{\St}{\mathrm{St}}
\newcommand{\Fr}{\mathrm{Fr}}
\newcommand{\st}{\mathrm{s.t.}}
\newcommand{\Diag}{\mathrm{Diag}}
\newcommand{\TM}{T_x \mathcal{M}}
\newcommand{\grad}{\mathrm{grad}\;\!}
\newcommand{\Hess}{\mathrm{Hess}\;\!}
\newcommand{\hess}{\mathrm{Hess}\;\!}
\newcommand{\Hc}{{\mathcal{H}^{\mathrm{c}}}}
\newcommand{\He}{{\mathcal{H}^{\mathrm{e}}}}
\newcommand{\hf}{{\mathrm{hf}}}
\newcommand{\ks}{{\mathrm{ks}}}
\newcommand{\f}{{\mathrm{f}}}
\newcommand{\ion}{{\mathrm{ion}}}
\newcommand{\xc}{{\mathrm{xc}}}
\newcommand{\Bcal}{{\mathcal{B}}}
\newcommand{\Ncal}{\mathcal{N}}
\newcommand{\rk}{\mbox{ rank }}
\newcommand{\ddt}{\frac{D}{d t} \frac{d}{d t}}
\newcommand{\Sbb}{{\mathbb{S}}}
\newcommand{\Vcal}{{\mathcal{V}}}
\newcommand{\argmin}{\mathop{\mathrm{arg\, min}}}
\begin{document}

\tableofcontents
\newpage 
	
\title{A brief introduction to manifold optimization}
\author{Jiang Hu\thanks{Beijing International Center for Mathematical
		Research, Peking University, China (\email{jianghu@pku.edu.cn})} 
	    \and Xin Liu \thanks{State Key Laboratory of Scientific and Engineering Computing, Academy of Mathematics and Systems Science, Chinese Academy of Sciences, and University of Chinese Academy of Sciences, China (email: liuxin@lsec.cc.ac.cn). Research supported in part by NSFC grants 11622112 and 11688101, the National Center for Mathematics and Interdisciplinary Sciences, CAS, and Key Research Program of Frontier Sciences QYZDJ-SSW-SYS010, CAS.}
        \and Zaiwen Wen\thanks{Beijing International Center for Mathematical
        Research, Peking University, China (\email{wenzw@pku.edu.cn}). Research supported in part by the NSFC grants 11421101 and 11831002, and by the National
		Basic Research Project under the grant 2015CB856002.}
	    \and Yaxiang Yuan\thanks{State Key Laboratory of Scientific and Engineering Computing, Academy of Mathematics and
	    Systems Science, Chinese Academy of Sciences, Beijing, China
    (\email{yyx@lsec.cc.ac.cn}). Research supported in part by NSFC grants 11331012 and 11461161005.}
       }
\maketitle

\begin{abstract}
     Manifold optimization is ubiquitous in computational and applied mathematics, statistics, engineering, machine learning, physics, chemistry and etc.  One of the main challenges usually is the non-convexity of the manifold constraints. By utilizing the geometry of manifold, a large class of constrained optimization problems can be viewed as unconstrained optimization problems on manifold. From this perspective, intrinsic structures, optimality conditions and numerical algorithms for manifold optimization are investigated.  Some recent progress on the theoretical results of manifold optimization are also presented.     
\end{abstract}

\section{Introduction}
Manifold optimization is concerned with the following optimization problem
\be  \label{prob} \begin{aligned}
	\min_{x \in \M}  \quad & f(x), \\
\end{aligned}
  \ee
where $\M$ is a Riemannian manifold and $f$ is a real-valued function on $\M$,
which can be non-smooth. If additional constraints other than the manifold
constraint are involved, we can add in $f$ an indicator function of the feasible
set of these additional constraints. Hence, \eqref{prob} covers a general formulation for manifold optimization.
In fact, manifold optimization has been widely used in computational and applied mathematics, statistics, machine learning, data science, material science and so on. The existence of the manifold constraint is one of the main difficulties in algorithmic design and theoretical analysis. 

\textbf{Notations.} Let $\R$ and $\C$ be the set of real and complex numbers. For a matrix $X \in \C^{n \times p}$, $\bar{X}, X^*, \Re X$ and $\Im X$ are its complex conjugate, complex conjugate transpose, real and imaginary parts, 
respectively. Let $\Sbb^n$ be the set of all $n$-by-$n$ real symmetric matrices.
For a matrix $M \in \C^{n\times n}$, $\diag(M)$ is a vector in $\C^{n}$
formulated by the diagonal elements of $M$. For a vector $c \in \C^n$,  $\Diag(c)$ is an $n$-by-$n$ diagonal matrix with the elements of $c$ on the diagonal. 
For a differentiable function $f$ on $\M$, let $\grad f(x)$ and $\hess f(x)$ be
its Riemannian gradient and Hessian at $x$, respectively. If $f$ can be extended
to the ambient Euclidean space, we denote its Euclidean gradient and Hessian by
$\nabla f(x)$ and $\nabla^2 f(x)$, respectively. 

This paper is organized as follows. In \cref{sec:application}, various kinds of
applications of manifold optimization are presented. We review geometry on
manifolds, optimality conditions as well as state-of-the-art algorithms for
manifold optimization in \cref{sec:algorithm}. For some selected practical
applications in \cref{sec:application}, a few  theoretical results based on
manifold optimization are introduced in \cref{sec:analysis}. 

\section{Applications of manifold optimization} \label{sec:application}
In this section, we introduce applications of manifold optimization in $p$-harmonic flow, max-cut problems, phase retrieval, eigenvalue problem, electronic structure calculations, Bose-Einstein condensates, cryo-electron microscopy (Cryo-EM), combinatorial optimization, deep learning and etc.

\subsection{$P$-harmonic flow} 
$P$-harmonic flow is used in the color image recovery and medical image analysis. For instance, in medical image analysis, the human brain is often mapped to a unit sphere via a conformal mapping, see \cref{fig:brain}.
\begin{figure}[ht] 
	\centering
	\includegraphics[width=0.7\textwidth,height=0.3\textwidth]{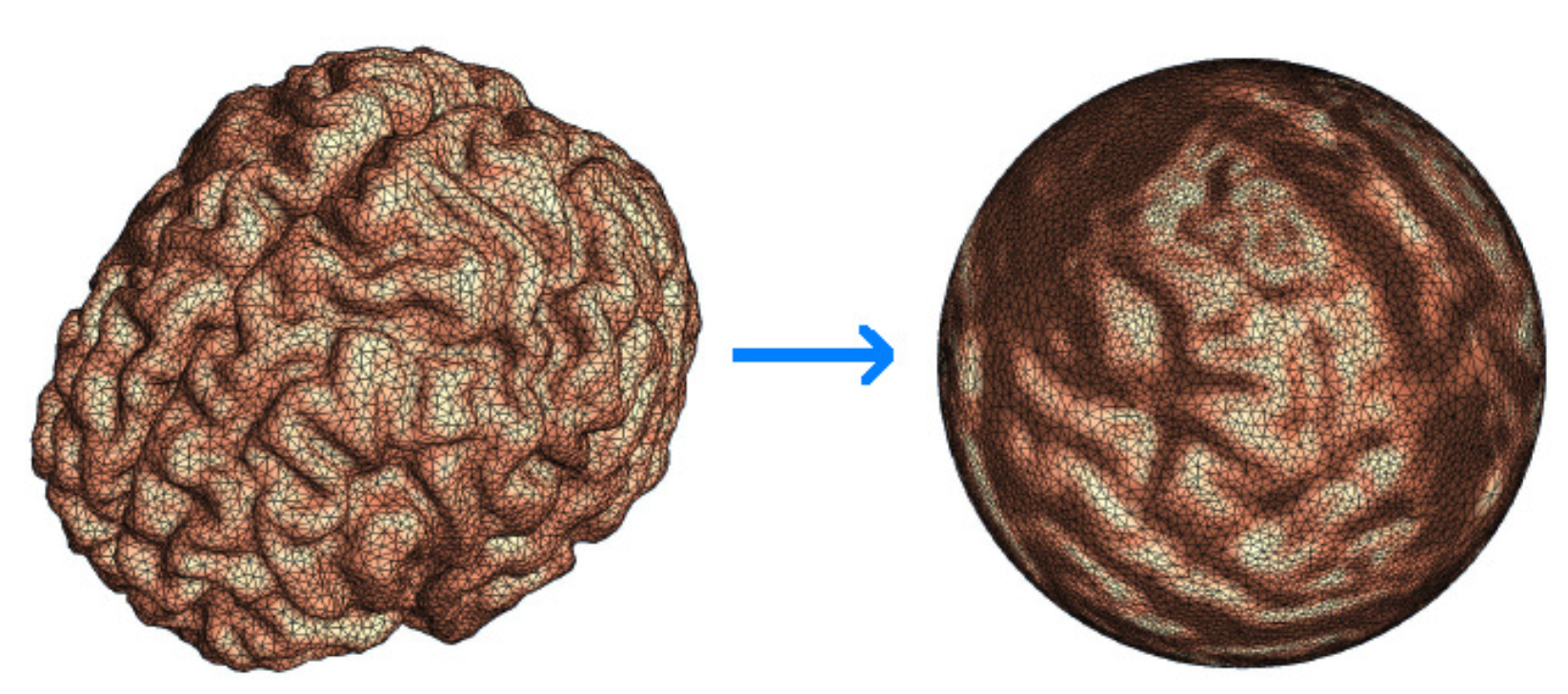} 
	\caption{conformal mapping between the human brain and the unit sphere \cite{lai2014folding}.}
	\label{fig:brain}
\end{figure}
By establishing a conformal mapping between an irregular surface and the unit
sphere, we can handle the complicated surface with the simple parameterizations
of the unit sphere. Here, we focus on the conformal mapping between genus-0
surfaces. From \cite{schoen1997lectures}, a diffeomorphic map between two
genus-0 surfaces $\Ncal_1$ and  $\Ncal_2$ is conformal if and only if it is a
local minimizer of the corresponding harmonic energy. Hence, one effective way
to compute the conformal mapping between two genus-0 surfaces is to minimize the
harmonic energy of the map. Before introducing the harmonic energy minimization
model and the diffeomorphic mapping, we review some related concepts on
manifold. Let $\phi_{\Ncal_1}(x^1,x^2):\R^2 \rightarrow \Ncal_1 \subset \R^3, \;
\phi_{\Ncal_2}(x^1, x^2):\R^2 \rightarrow \Ncal_2 \subset \R^3$ be the local
coordinates on $\Ncal_1$ and $\Ncal_2$, respectively. The first fundamental form
on $\Ncal_1$ is $g = \sum_{ij} g_{ij} dx^idx^j$, where $g_{ij} = \frac{\partial
\phi_{\Ncal_1}}{\partial x^i} \cdot \frac{\partial \phi_{\Ncal_1}}{\partial
x^j}, ~i,j=1,2 $. The first fundamental form on $\Ncal_2$ is $h = \sum_{ij}
h_{ij} dx^idx^j$, where $h_{ij} = \frac{\partial \phi_{\Ncal_2}}{\partial x^i}
\cdot \frac{\partial \phi_{\Ncal_2}}{\partial x^j}, ~i,j=1,2 $. Given a smooth map $f~:~\Ncal_1 \rightarrow \Ncal_2$, whose local coordinate representation is $f(x^1, x^2) = (f_1(x^1, x^2), f_2(x^1, x^2))$, the density of the harmonic energy of $f$ is 
\bee  e(f) = \|\mathrm{d}f\|^2 = \sum_{i,j=1,2}g^{ij}\langle f_*\partial_{x^i}, f_*\partial_{x^j}\rangle_{h}, \eee
where $(g^{ij})$ is the inverse of $(g_{ij})$ and the inner product between $f_*\partial_{x^i}$ and $f_*\partial_{x^j}$ is defined as:
\bee
\begin{split}
	\ \iprod{f_*\partial_{x^i}}{f_*\partial_{x^j}}_{h}=
	\iprod {\sum_{m=1}^2\frac{\partial f_m}{\partial x^i}\partial_{y_m}} {\sum_{n=1}^2\frac{\partial f_n}{\partial x^j}\partial_{y_n}}_{h}
	=  \sum_{m,n=1}^2h_{mn}\frac{\partial
		f_m}{\partial x^i}\frac{\partial f_n}{\partial x^j}.
\end{split}
\eee
This also defines a new Riemannian metric on $\Ncal_1$, $f^*(h)(\vec{v_1},\vec{v_2}) :=\langle f_*(\vec{v_1}), f_*(\vec{v_2})\rangle_{h}$, which is called the pullback metric induced by $f$ and $h$. Denote by $\mathbb{S}(\Ncal_1,\Ncal_2)$ the set of smooth maps between $\Ncal_1$ and $\Ncal_2$. Then the harmonic flow minimization problem solves
\bee
\min_{f\in\mathbb{S}(\Ncal_1,\Ncal_2)} 
\E(f) = \frac{1}{2}\int_{\Ncal_1}e(f)\mathrm{d}\Ncal_1,
\eee
 where $\E(f) $ is called the harmonic energy of $f$. Stationary points of $\E$
 are the harmonic maps from $\Ncal_1$ to $\Ncal_2$. In particular, If $\Ncal_2 =
 \mathbb{R}^2$, the conformal map $f = (f_1,f_2)$ is two harmonic functions
 defined on $\Ncal_1$. If we consider a $p$-harmonic map from  $n$ dimensional manifold $\M$ to $n$ dimensional sphere $S^n \subset \R^{n+1}$, the $p$-harmonic energy minimization problem can be written as
\bee
\label{eqn:pharmenergySn}
\begin{aligned}\min_{\vec{F}(x)=(f_1(x),\cdots,f_{n+1}(x)) } & \quad \E_p(\vec{F}) = \frac{1}{p}\int_{\M}\left(\sum_{k=1}^{n+1}\|\nabla_{\M}f_k\|^2\right)^{p/2}\mathrm{d}\M  \\
    \st   \quad\quad \quad\quad &  \vec{F}(x) \in S^n, ~\quad \forall x\in\M.
\end{aligned}
\eee

\subsection{Max cut}
Given a graph $G = (V,E)$ with a set of $n$ vertexes $V~(|V| = n)$ and a set of
edges $E$. Denote by the weight matrix $W=(w_{ij})$. The max-cut problem is to split $V$ into two nonempty sets $(S, V\backslash S)$ such that the total weights of edges in the cut is maximized. For each vertex $i=1,\ldots, n$, we define $x_i = 1$ if $i\in S$ and $-1$ otherwise. The maxcut problem can be written as
\be \label{prob:maxcut} \max_{x \in \R^n} \; \half \sum_{i<j} w_{ij}(1-x_i x_j), \; \st \; \; x_i^2 = 1, \;
i=1,\ldots, n.\ee
It is NP-hard. By relaxing the rank-1 constraint $xx^\top$ to a positive semidefinite matrix $X$ and further neglecting the rank-1 constraint on $X$, we obtain the following semidefinite program (SDP)
\be \label{prob:maxcut-sdp} \max_{ X \succeq 0} \quad   \tr(CX), \;
\st \; X_{ii} = 1, \quad i =1,\cdots,n,  \ee
where $C$ is the graph Laplacian matrix divided by $4$, i.e., $C=-\frac{1}{4}(\diag(We)-W )$. 
If we decompose $X=V^\top V$ with $V:=[V_1, \ldots,
V_n] \in \R^{p \times n}$, a nonconvex relaxation of \eqref{prob:maxcut} is
\be \label{prob:maxcut-noncvxrelax} \max_{V=[V_1, \ldots, V_n]} \; \tr(CV^\top V), \; \st \;
\|V_i\|_2 = 1, \;
i=1,\ldots, n.\ee
It is an optimization problem over multiple spheres.

\subsection{Low-rank nearest correlation estimation}

Given a symmetric matrix $C \in \Sbb^n$ and a non-negative symmetric weight
matrix $H \in \Sbb^n$, this problem is to find a correlation matrix $X$ of low
rank such that the distance weighted by $H$ between $X$ and $C$ is minimized:  
\be \label{prob:rNCM} \min_{ X \succeq 0} \; \half \| H \odot (X - C) \|_F^2, \;\; \st \;
X_{ii} = 1, \; i = 1, \ldots, n, \; \rk(X) \le p.\ee
Algorithms for solving \eqref{prob:rNCM} can be found in
\cite{SimonAbell2010,GaoSun2010}.  Similar to the maxcut problem, we decompose the low-rank matrix $X$ with $X = V^\top V$, in which $ V = [V_1, \ldots, V_n]  \in \R^{p \times n}$. Therefore,
problem \eqref{prob:rNCM} is converted to a quartic polynomial optimization
problem over multiple spheres:
\bee \label{prob:rNCM-V} \min_{ V \in \R^{p \times n}} \; \half \| H \odot
(V^\top V - C) \|_F^2, \;
\st \; \|V_i\|_2 = 1, \; i = 1, \ldots, n. \eee

\subsection{Phase retrieval}
Given some modules of a complex signal $x\in \mathbb C^n$ under linear measurements, a classic model for phase retrieval is to solve
\begin{equation}
\label{eq:Orignal}
\begin{aligned}
\mathrm{find} & \quad x \in \mathbb{C}^n \\
\hbox{s.t.} & \quad  |Ax|=b,
\end{aligned}
\end{equation}
where $A\in \mathbb{C}^{m\times n} $ and $b\in \mathbb{R}^m$. This problem plays
an important role in X-ray, crystallography imaging, diffraction imaging and
microscopy. Problem \eqref{eq:Orignal} is equivalent to the following problem,
which minimizes the phase variable $y$ and signal variable $x$ simultaneously:
\bee
\label{eq:deformation}
\begin{aligned} \min_{x\in\mathbb{C}^n, y\in \mathbb{C}^m} &\quad  \|Ax-y\|_2^2 \\
	\hbox{ s.t.} \quad &\quad  |y|=b.
\end{aligned}
\eee
In \cite{waldspurger2015phase}, the problem above is rewritten as 
\begin{equation}
\label{eq:phasecut1} \begin{aligned} \min_{x\in \mathbb{C}^n,u\in \mathbb{C}^m}
& \half \| Ax - \diag\{b\}u \|_2^2 \\
\hbox{s.t.} \quad &  |u_i|=1,i=1,\dots,m.
\end{aligned}
\end{equation}
For a fixed phase $u$, the signal $x$ can be represented by $x=A^{\dag}\diag\{b\}u$.
Hence, problem \eqref{eq:phasecut1} is converted to
\begin{equation}
\label{eq:phasecut2}
\begin{aligned}
\min_{u \in \mathbb{C}^m}   \quad& u^\ast M u\\
\hbox{s.t.}  \quad & |u_i|=1,i=1,\dots,m,
\end{aligned}
\end{equation}
where $M=\diag\{b\}(I-AA^{\dag})\diag\{b\}$ is positive definite. 
It can be regarded as a generalization of the maxcut problem to complex spheres. 

If we denote $X = uu^*$, \eqref{eq:phasecut2} can also be modelled as the following SDP problem \cite{cai2019fast}
\[ \min \quad \tr(MX) \quad \st \;\; X \succeq 0, \; \rk(X) = 1, \]
which can be further relaxed as 
\[ \min \quad \tr(MX)\quad \st \;\; \rk(X) = 1, \] 
whose constraint is a manifold. 

\subsection{Bose-Einstein condensates}
In Bose-Einstein condensates (BEC), the total energy functional is defined as
\[ E(\psi) = \int_{\R^d} \left[ \half |\nabla \psi(w)|^2 + V(w)|\psi(w)|^2 + \frac{\beta}{2}|\psi(w)|^4 - \Omega \bar{\psi}(w)L_z(w)\right] dw,  \]
where $w\in \R^d$ is the spatial coordinate vector, $\bar{\psi}$ is the complex
conjugate of $\psi$, $L_z = -i(x\partial - y\partial x),\, V(w)$ is an external
trapping potential,  and $\beta, \Omega$ are given constants. The ground state of BEC is defined as the minimizer of the following optimization problem
\[ \min_{\phi \in S } \quad E(\phi), \]
where the spherical constraint $S$ is
\[ S = \left \{ \phi~:~E(\phi) \leq \infty, \; \int_{\R^d} |\phi(w)|^2 dw= 1 \right \}.  \]
The Euler-Lagrange equation of this problem is to find $(\mu \in \R, \, \phi(w))$ such that
\[ \mu \phi(w) = -\half \nabla^2 \phi(w) + V(w) \phi(w) + \beta |\phi(w)|^2 \phi(w) - \Omega L_z \phi(w), \; \xi \in \R^d, \] 
and
\[ \int_{\R^d} |\phi(w)|^2 dw = 1. \]
Utilizing some proper discretization, such as finite difference, sine pseudospectral and Fourier pseudospectral methods, we obtain a discretized BEC problem 
\[ \min_{x \in \mathbb{C}^M} ~ f(x) := \frac{1}{2} x^*Ax + {\frac{\beta}{2}}\sum_{j =1}^M |x_j|^4, \quad \st \quad \|x\|_2 = 1, \]
where $M \in \N$, $\beta$ are given constants and $A \in \mathbb{C}^{M\times M}$ is Hermitian. Consider the case that $x$ and $A$ are real. Since $x^\top x=1$, multiplying the quadratic term of the objective function by $x^\top x$, we obtain the following equivalent problem
\bee \label{eq:d1}
\,\,\left\{
\begin{array}{lll}
	\displaystyle{\min_{x \in \mathbb{R}^M}} & f(x) = \frac{1}{2}x^{*\top}Axx^{\top}x + \frac{\beta}{2}\sum_{i=1}^{M}|x_i|^4\\
	\mbox{s.t.} & \|x\|_2 = 1.
\end{array}
\right.
\eee
The problem above can be also regarded as the best rank-1 tensor approximation of a fourth-order tensor $\mathcal{F}$ \cite{hu2016note}, with 
\bee \label{BEC-tensor}
\mathcal{F}_{\pi(i,j,k,l)}= \left \{
\begin{aligned}  a_{kl}/4 ,     \quad & i= j= k \ne l ,\\
	a_{kl}/12 ,     \quad  & i= j, i \ne k, i \ne l, k\ne l ,\\
	(a_{ii}+a_{kk})/12,  \quad & i= j\ne k =l ,\\
	a_{ii}/2+\beta/4 , \quad & i= j= k= l ,\\
	0           ,  \quad & \mbox{otherwise}.
\end{aligned}
\right.
\eee
For the complex case, we can obtain a best rank-1 complex tensor approximation problem by a similar fashion.
Therefore, BEC is an polynomial optimization problem over single sphere.

\subsection{Cryo-EM}

The Cryo-EM problem is to reconstruct a three-dimensional object from a series
of two-dimensional projected images $\{P_i\}$ of the object.
A classic model formulates it into an optimization problem over multiple orthogonality constraints 
\cite{singer2011three} to compute the $N$ corresponding directions $\{\tilde{R}_i\}$ of $\{P_i\}$, see \cref{fig:cryo2}.
Each $\tilde{R}_i\in\mathbb{R}^{3\times 3}$ is a three-dimensional rotation, i.e., $\tilde{R}^\top_i\tilde{R}_i = I_3$ and $\det(\tilde{R}_i)=1$. 
\begin{figure}[htp] 
	\centering
	\includegraphics[width=0.7\textwidth,height=0.5\textwidth]{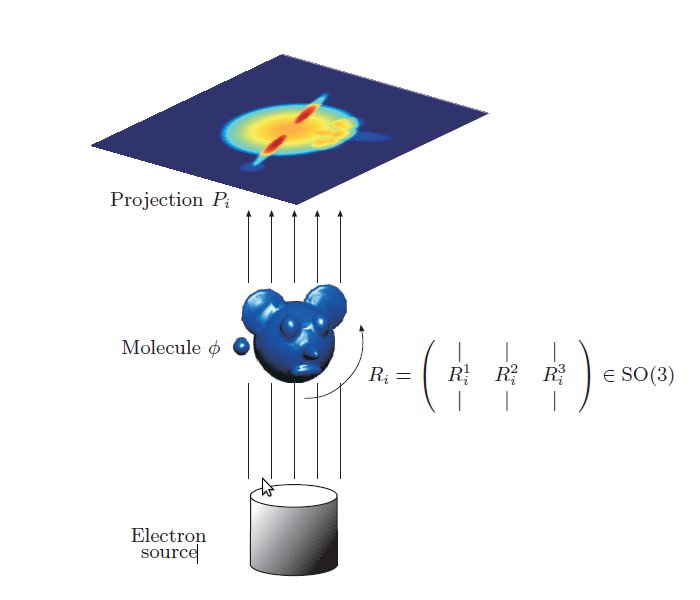} 
	\caption{Recover the 3-D structure from 2-D projections \cite{singer2011three}.}
	\label{fig:cryo2}
\end{figure}
Let $\tilde{c}_{ij} = (x_{ij},y_{ij},0)$ be the common line of $P_i$ and $P_j$ (viewed in $P_i$).
If the data are exact, it follows from the Fourier projection-slice theorem \cite{singer2011three}, the common lines coincide, i.e.,
\[\tilde{R}_i\tilde{c}_{ij}=\tilde{R}_j\tilde{c}_{ji}.\]
Since the third column of $\tilde{R}^3_i$ can be represented by the first two columns $\tilde{R}_i^1$ and $\tilde{R}_i^2$ as
$\tilde{R}^3_i=\pm\tilde{R}^1_i\times\tilde{R}^2_i$, the rotations $\{\tilde{R}_i\}$ can be compressed as a 3-by-2 matrix.  
Therefore, the corresponding optimization problem is
\begin{equation} \label{prob:cryo}
\begin{aligned}
\min_{R_i}  \quad \sum^N_{i=1}\rho(R_i c_{ij},R_j c_{ji}),\quad                     
\mathrm{s.t.}  \quad R^\top_iR_i=I_2,R_i\in\R^{3\times 2},
\end{aligned}
\end{equation}
where $ \rho$ is a function to measure the distance between two vectors,
$R_i$ are the first two columns of $\tilde{R}_i$ and $c_{ij}$ are the first two entries of $\tilde{c}_{ij}$. In \cite{singer2011three}, the distance function is set as $\rho(u,v)=\|u-v\|_2^2$. An eigenvector relaxation and SDP relaxation are also presented in \cite{singer2011three}.

\subsection{Linear eigenvalue problem}
Linear eigenvalue decomposition and singular value decomposition are the special cases of optimization with orthogonality constraints. Linear eigenvalue problem can be written as
\be \label{prob:eig} \min_{X \in \R^{n\times p}} \quad \tr(X^\top AX), \quad \st \; X^\top X = I, \ee
where $A \in \Sbb^{n}$ is given. Applications from low rank matrix optimization,
data mining, principal component analysis and high dimensionality reduction
techniques often need to deal with large-scale dense matrices or matrices with
some special structures. 
 Although modern computers are developing
 rapidly, most of the current eigenvalue and singular value decomposition
 softwares are limited by the traditional design and implementation. In
 particular, the efficiency may not be significantly improved when working with
 thousands of CPU cores. From the perspective of optimization,  a series of fast
 algorithms for solving \eqref{prob:eig} was proposed in \cite{liu2015efficient,
 liu2013limited, wen2016trace, wen2017accelerating}, whose essential parts can
 be divided into two steps, updating a subspace to  approximate the
 eigenvector space better and extracting eigenvectors by the Rayleigh-Ritz (RR)
 process. The main numerical algebraic technique for updating subspaces 
  is usually based on the Krylov subspace, which constructs a series of
 orthogonal bases sequentially. 
In \cite{wen2016trace}, the authors propose an equivalent unconstrained penalty function model
\bee
\min_{X \in \R^{n \times p}}\; 
f_\mu(X) := \frac{1}{2}\tr(X^\top AX) +
\frac{\mu}{4}\|X^\top X-I\|^2_F,
\eee
where $\mu$ is a parameter. By choosing an appropriate finite large $\mu$, the
authors established its equivalence with \eqref{prob:eig}. When $\mu$ is chosen
properly, the number of saddle points of this model is less than that of
\eqref{prob:eig}. More importantly, the
model allows one to design an algorithm that uses only matrix-matrix
multiplication. A Gauss-Newton algorithm for calculating low rank decomposition
is developed in \cite{liu2015efficient}. 
When the matrix to be decomposed is of low rank, this algorithm can be more
effective while its complexity is similar to the gradient method but with $Q$ linear
convergence. Because the bottleneck of many current iterative algorithms is the
RR procedure of the eigenvalue decomposition of smaller dense matrices,
the authors of \cite{wen2017accelerating} proposed a unified augmented subspace
algorithmic framework. Each step iteratively solves a linear eigenvalue problem: 
\[ Y = \argmin_{X \in \R^{n \times p}} ~\{ \tr(X^\top AX)~: X^\top X = I, \; X \in \mathcal{S} \}, 
\] 
where $\mathcal{S}:= \mathrm{span} \{ X, AX, A^2X, \ldots, A^k X \}$ with a
small $k$ (which can be far less than $p$). 
By combining with the polynomial acceleration technique and
deflation in classical eigenvalue calculations, it needs only  one RR
procedure theoretically to reach a high accuracy. 

When the problem dimension reaches the magnitude of $O(10^{42})$, the scale of data storage far exceeds the extent that traditional algorithms can handle. In \cite{zhang2017subspace}, the authors consider to use a low-rank tensor format to express data matrices and eigenvectors. 
Let $N= n_1n_2\ldots n_d$ with positive integer $n_1, \ldots, n_d$. A vector $u\in \R^N$ can be reshaped as a tensor
$\mathbf{u} \in\R^{n_1\times n_2\times\cdots\times n_d}$, whose entries $u_{i_1i_2\dots 
	i_d}$ are aligned in reverse lexicographical order, $1\leq
i_{\mu}\leq n_{\mu}, \mu=1,2,\dots,d$. A tensor $\mathbf{u}$ can be written as the TT format if its entries can be represented by
\bee
\label{TTdef}
u_{i_1i_2\dots i_d}=U_1(i_1)U_2(i_2)\cdots U_d(i_d),
\eee
where $U_{\mu}(i_{\mu})\in \R^{r_{\mu-1}\times
	r_{\mu}},i_{\mu}=1,2,\dots,n_{\mu}$ and fixed  dimensions $r_\mu, \; \mu=0,1,\ldots,d$ with $r_0 = r_d = 1$. In fact, the components  $r_\mu$, $\mu=1,\ldots,d-1$ are often equal to a value $r$ ($r$ is then called the TT-rank). 
Hence, a vector $u$ of dimension $\mathcal{O}(n^d)$ can be stored with $\mathcal{O}(dnr^2)$ entries if the corresponding tensor $\mathbf{u}$ has a TT format. A graphical representation of $\mathbf{u}$
can be seen in \cref{fig:TT}.
The eigenvalue problem can be solved based on the subspace algorithm. By
utilizing the alternating direction method with suitable truncations, the performance of the algorithm can be further improved. 
\begin{figure}[!h] 
	\centering
	\begin{tikzpicture}
	\draw (0.5,0) -- (0.7,0.3);    
	\draw [thick](0.5+0.08+0.04,0+0.12+0.06) -- (1.3+0.08+0.04,0+0.12+0.06);         
	\draw [thick](0.5+0.08+0.04,0+0.12+0.06-2.5) -- (1.3+0.08+0.04,0+0.12+0.06-2.5);   
	\node at (0.5+0.08+0.45,0+0.12+0.06-2.7) {\tiny $\mathbf{U_1(i_1)}$};
	\node at (0.5+0.08+0.45,0+0.12+0.06-2.4) {\tiny $\mathbf{r_1}$};

	\draw (0.7,0.3) -- (1.5,0.3);  
	\draw (0.5,0) -- (0.3,-0.3);
	\draw (0.3,-0.3) -- (1.1,-0.3);
	\draw (1.1,-0.3) -- (1.5,0.3);
	\node at (0.9,-0.05) {\small $\mathbf{U} _1$};
	\node at (0.3,-0.4) {\tiny $\mathbf{r_0}$};
	\node at (0.32,0) {\tiny $\mathbf{n_1}$};
	\node at (0.8,-0.4) {\tiny $\mathbf{r_1}$};
	\draw (1.3,0) -- (1.8,0);
	\node at (1.65,-2.35) {$\times$};
	\draw (1.8,-0.4) -- (1.8,0.4);
	\draw [thick] [fill=gray](1.88,0.52) to (3.38,0.52) to (3.38,-0.28) to (1.88,-0.28) to (1.88,0.52);
	
	\draw [thick] [fill=gray](1.88,0.52-2.5) to (3.38,0.52-2.5) to (3.38,-0.28-2.5) to (1.88,-0.28-2.5) to (1.88,0.52-2.5);
	\node at (2.65, -2.95) {\tiny$\mathbf{U_2(i_2)}$};
	\node at (2.65, -1.9) {\tiny$\mathbf{r_2}$};
	\node at (1.75, -2.15) {\tiny$\mathbf{r_1}$};
	\draw (1.8,0.4) -- (3.3,0.4);  
	\draw (1.8,-0.4) -- (3.3,-0.4);
	\draw (3.3,-0.4) -- (3.3,0.4);
	\draw (1.8,0.4) -- (2,0.7);
	\draw (3.3,0.4) -- (3.5,0.7);
	\draw (2,0.7) -- (3.5,0.7);
	\draw (3.3,-0.4) -- (3.5,-0.1);
	\draw (3.5,0.7) -- (3.5,-0.1);
	\draw [dashed] (1.8,-0.4) -- (2, -0.1);
	\draw [dashed] (2,-0.1) -- (2,0.7);
	\draw [dashed] (2,-0.1) -- (3.5,-0.1);
	\node at (2.65,0.05)  {\small $\mathbf{U} _{2}$};
	\node at (1.67,-0.18) {\tiny $\mathbf{r_{1}}$};
	\node at (2.6,-0.52) {\tiny $\mathbf{r_{2}}$};
	\node at (1.68,0.58) {\tiny $\mathbf{n_{2}}$};
	\draw (3.3+0.1,0 ) -- (4,0);
	\node at (3.75,-2.35) {$\times$};
	\draw [thick][fill = gray] (4+0.1,0.75+0.15) to (4+0.1+1.5,0.75+0.15) to (4+0.1+1.5,0.75+0.15-1.5) to (4+0.1,0.75+0.15-1.5) to (4+0.1,0.75+0.15);
	\draw [thick][fill = gray] (4+0.1,0.75+0.15-2.5) to (4+0.1+1.5,0.75+0.15-2.5) to (4+0.1+1.5,0.75+0.15-1.5-2.5) to (4+0.1,0.75+0.15-1.5-2.5) to (4+0.1,0.75+0.15-2.5);
	
	\node at (4.9,-3.28) {\tiny$\mathbf{U_3(i_3)}$};
	\node at (4.9,-1.52) {\tiny$\mathbf{r_3}$};
	\node at (3.95,-2.05) {\tiny$\mathbf{r_2}$};
	\draw (4.4-0.4,-0.55+1.3) -- (4.4-0.4,-2.05+1.3);
	\draw (4.4-0.4,-0.55+1.3) -- (5.9-0.4,-0.55+1.3);
	\draw (4.4-0.4,-2.05+1.3) -- (5.9-0.4,-2.05+1.3);
	\draw (5.9-0.4,-0.55+1.3) -- (5.9-0.4,-2.05+1.3);
	\draw (4.4-0.4,-0.55+1.3) -- (4.6-0.4,-0.25+1.3);
	\draw (5.9-0.4,-0.55+1.3) -- (6.1-0.4,-0.25+1.3);	
	\draw (4.6-0.4,-0.25+1.3) -- (6.1-0.4,-0.25+1.3);
	\draw (5.9-0.4,-2.05+1.3) -- (6.1-0.4,-1.75+1.3);
	\draw (6.1-0.4,-1.75+1.3) -- (6.1-0.4,-0.25+1.3);
	\draw [dashed] (4.4-0.4,-2.05+1.3) -- (4.6-0.4,-1.75+1.3);
	\draw [dashed] (4.6-0.4,-1.75+1.3) -- (4.6-0.4,-0.25+1.3);
	\draw [dashed] (4.6-0.4,-1.75+1.3) -- (6.1-0.4,-1.75+1.3);
	\node at (5.27-0.45,-1.28+1.3) {\small $\mathbf{U} _{3}$};
	\node at (3.85,-1.65+1.45) {\tiny $\mathbf{r_{2}}$};
	\node at (4.8,-2.22+1.3) {\tiny $\mathbf{r_{3}}$};
	\node at (4.33-0.4,-0.38+1.3) {\tiny $\mathbf{n_{3}}$};
	\node at (6.33,-0.05) {$\mathbf{\cdots\cdots}$};
	
	\draw [thick][fill = gray](6.9+0.12,0.75+0.18) to (6.9+0.12+0.9,0.75+0.18) to (6.9+0.12+0.9,0.75+0.18-1.5) to (6.9+0.12,0.75+0.18-1.5) to (6.9+0.12,0.75+0.18); 
	\draw [thick][fill = gray](6.9+0.12,0.75+0.18-2.5) to (6.9+0.12+0.9,0.75+0.18-2.5) to (6.9+0.12+0.9,0.75+0.18-1.5-2.5) to (6.9+0.12,0.75+0.18-1.5-2.5) to (6.9+0.12,0.75+0.18-2.5);
	\node at (5.8,-2.35) {$\times$};
	\node at (6.83,-2.35) {$\times$};
	\draw (6.9,0.75) -- (6.9,-0.75);
	\draw (6.9,0.75) -- (7.8,0.75);
	\draw (6.9,-0.75) -- (7.8,-0.75);  
	\draw (7.8,0.75) -- (7.8,-0.75);
	\draw (6.9,0.75) -- (7.1,1.05);
	\draw (7.1,1.05) -- (8,1.05);
	\draw (8,1.05) -- (7.8,0.75);
	\draw (7.8,-0.75) -- (8,-0.45);
	\draw (8,-0.45) -- (8,1.05);
	\draw [dashed](6.9,-0.75) -- (7.1,-0.45);
	\draw [dashed](7.1,-0.45) -- (7.1,1.05);
	\draw [dashed](7.1,-0.45) -- (8,-0.45);
	\node at (7.45,0) {\small $\mathbf{U_{d\textbf{-}1}}$};
	\node at (7.45,-3.25) {\tiny $\mathbf{U_{d\textbf{-}1}}(i_{d\textbf{-}1})$};
	\node at (7.45,-1.47) {\tiny $\mathbf{r_{d\textbf{-}1}}$};
	\node at (6.76,-2.0) {\tiny $\mathbf{r_{d\textbf{-}2}}$};
	\node at (8.87,-2.1) {\tiny $\mathbf{r_{d\textbf{-}1}}$};

	\node at (6.65,0.1) {\tiny $\mathbf{r_{d\textbf{-}2}}$};
	\node at (7.4,-0.88) {\tiny $\mathbf{r_{d\textbf{-}1}}$};
	\node at (6.75,0.93) {\tiny $\mathbf{n_{d\textbf{-}1}}$};
	\draw [thick] (8.5+0.08,0.45+0.12) to (8.5+0.08,0.45+0.12-0.9);
	\draw [thick] (8.5+0.08,0.45+0.12-2.5) to (8.5+0.08,0.45+0.12-0.9-2.5);
	\node at (6.33,-2.35){$\mathbf{\cdots\cdots}$};
	\node at (8.25,-2.35) {$\times$};
	\draw (7.9,0) -- (8.5,0);
	\draw (8.5,0.45) -- (8.5,-0.45);
	\draw (8.5,0.45) -- (8.7,0.75);
	\draw (8.5,-0.45) -- (8.7,-0.15);
	\draw (8.7,0.75) -- (8.7,-0.15);
	\node at (8.95,0.05) {\small $\mathbf{U} _d$};
	\node at (8.3,0.1) {\tiny $\mathbf{r_{d\textbf{-}1}}$};
	\node at (8.5,-0.5) {\tiny $\mathbf{r_{d}}$};
	\node at (8.45,0.65) {\tiny $\mathbf{n_{d}}$};
	\node at (9.05,-2.45) {\tiny $\mathbf{U_d(i_d)}$};
	\end{tikzpicture}
	\caption{Graphical representation of a TT tensor of order $d$ with cores $\mathbf{U} _{\mu},~\mu = 1,2,\ldots ,d$. The first row is $\mathbf{u}$, the second row are its entries $u_{i_1i_2\ldots i_d}$.
	} \label{fig:TT}
\end{figure}
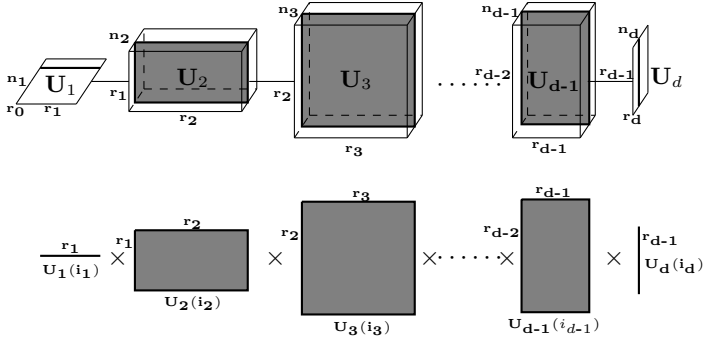

The online singular value/eigenvalue decomposition appears in principal
component analysis (PCA). The traditional PCA first reads the data and then
performs eigenvalue decompositions on the sample covariance matrices. If the
data is updated, the principal component vectors need be investigated again
based on the new data. Unlike traditional PCA, the online PCA reads the samples
one by one and updates the principal component vector in an iterative way, which
is essentially a random iterative algorithm  of the maximal trace optimization
problem. As the sample grows, the online PCA algorithm leads to more accurate
main components. An online PCA is proposed and analyzed in
\cite{oja1985stochastic}. It is proved that the convergence rate is $O(1/n)$
with high probability. A linear convergent VR-PCA algorithm is investigated in
\cite{shamir2015stochastic}. In \cite{li2018near}, the scheme in
\cite{oja1985stochastic} is  further proved that under the assumption of
Subgaussian's stochastic model, the convergence speed of the algorithm can reach
the minimal bound of the information, and the convergence speed is near-global.

\subsection{Nonlinear eigenvalue problem}
The nonlinear eigenvalue problems from electronic structure calculations are another important source of problems with orthogonality constraints,
such as Kohn-Sham (KS) and Hartree-Fock (HF) energy minimization problems. By properly discretizing, KS energy functional can be expressed as
\[ E_{\ks}(X) := \frac{1}{4}\tr(X^*LX) + \half \tr(X^*V_{\ion}X) + \half \sum_l \sum_i \zeta_l |x_i^* w_l|^2 + \frac{1}{4} \rho^\top L^\dag \rho + \half e^\top \epsilon_{\xc}(\rho), \]
where $X \in \mathbb{C}^{n \times p}$ satisfies $X^*X = I_p$, $\rho =
\diag(XX^*)$ is the charge density and $\mu_{\xc}(\rho) := \frac{\partial
\epsilon_{\xc}(\rho)}{\partial \rho}$ and $e$ are vectors in $\R^n$ with
elements all of ones. More specifically, $L$ is a finite dimensional representation of the Laplacian operator, $V_\ion$ is a constant example, $w_l$
represents a discrete reference projection function, $\zeta_l$ is a constant of $\pm 1$, $\epsilon_{\xc}$ is used to characterize exchange-correlation energy. With the KS energy functional, the KS energy minimization problem is defined as
\[ \min_{X \in \mathbb{C}^{n \times p}} \quad E_{\ks}(X) \quad \st \;\; X^*X = I_p. \]
Compared to the KS density functional theory, the HF theory can provide a more accurate model. Specifically, it introduces a Fock exchange operator, which is a fourth-order tensor by some discretization, $\Vcal(\cdot): \mathbb{ C}^{n \times n} \rightarrow \mathbb{C}^{n \times n}$. The corresponding Fock energy can be expressed as
\[ E_{\f} := \frac{1}{4} \iprod{\Vcal(XX^*)X}{X} = \frac{1}{4} \iprod{\Vcal(XX^* )}{XX^*}. \]
The HF energy minimization problem is then
\be \label{prob:hf} \min_{X \in \mathbb{C}^{n\times p}} \quad E_{\hf}(X) := E_{\ks}(X) + E_{\f}(X) \quad \st \;\; X^*X = I_p. \ee

The first-order optimality conditions of KS and HF energy minimization problems correspond to two different nonlinear eigenvalue problems. Taking KS energy minimization as an example, the first-order optimality condition is
\be \label{eq:ks} H_{\ks}(\rho) X = X \Lambda, \quad X^*X = I_p, \ee
where ${H_{\ks}(X)} := \half L + V_{\ion} + \sum_l \zeta_l w_lw_l^* + {\diag}((\Re L^\dag) \rho) + { \diag}(\mu_{\xc}(\rho)^* e)$ and $\Lambda$ are diagonal matrices. The equation \eqref{eq:ks} is also called the KS equation.
The nonlinear eigenvalue problem aims to find some orthogonal eigenvectors
satisfying \cref{eq:ks}, while the optimization problem with orthogonality
constraints minimizes the objective function under the same constraints. These
two problems are connected by the optimality condition and both describe the steady state of the physical system. 

The most widely used algorithm for solving the KS equation is the so-called self-consistent field iteration (SCF), which is to solve the following linear eigenvalue problems repeatedly
\be \label{alg:scf} H_{\ks}(\rho_k) X_{k+1} = X_{k+1} \Lambda_{k+1}, \quad X_{k+1}^*X_ {k+1} = I_p, \ee
where $\rho_k = \diag(X_k X_k^*)$.
In practice, to accelerate the convergence, we often replace the charge density
$\rho_k$ by a linear combination of the previously existing $m$ charge densities
\[ \rho_{mix} = \sum_{j=0}^{m-1} \alpha_j \rho_{k-j}. \]
In the above expression, $\alpha = (\alpha_0, \alpha_1, \ldots, \alpha_{m-1})$ is the solution to the following minimization problem:
\[ \min_{\alpha^\top e = 1} \quad \| R\alpha \|^2, \]
where $R= (\Delta \rho_k, \Delta \rho_{k-1}, \ldots, \Delta \rho_{k-m+1})$,
$\delta_j = \rho_j - \rho_{j-1}$ and $e $ is a $n$-dimensional vector of all
entries ones.
After obtaining $\rho_{mix}$, we replace $H_{\ks}(\rho_k)$ in \eqref{alg:scf} with
$H_{\ks}(\rho_{mix})$ and execute the iteration \eqref{alg:scf}.
This technique is called charge mixing. For more details, one can refer to \cite{pulay1980convergence,pulay1982improved,toth2017local}.

Since SCF may not converge, many
researchers have recently developed optimization algorithms for the electronic
structure calculation that can guarantee convergence. In
\cite{zhang2014gradient}, the manifold gradient method is directly extended to
solve the KS minimum problem. The algorithm complexity is mainly from the
calculation of the total energy and its gradient calculation, and the projection
on the Stiefel manifold. Its complexity at each step is much lower than the
linear eigenvalue problem and it is easy to be parallelized. Extensive numerical experiments based on the software packages Octopus and RealSPACES show that the algorithm is often more efficient than SCF.
In fact, the iteration \eqref{alg:scf} of SCF can be understood
as an approximate Newton algorithm in the sense that the
complicated part of the Hessian of the total energy is not considered:
\[ \min_{X \in \mathbb{C}^{n \times p}} \quad q(X) := \frac{1}{2} \tr(X^*H_{\ks}(\rho_k)X) \quad \st \;\; X^*X = I_p. \]
Since $q(X)$ is only a local approximation model of $E_{\ks}(X)$, there is no
guarantee that the above model ensures a sufficient decrease of $E_{\ks}(X)$. 

An explicit expression of the complicated part of the Hessian matrix is derived
in \cite{wen2013adaptive}. Although this part is not suitable for an explicit storage, its operation with a vector is simple and feasible.
Hence, the full Hessian matrix can be used to improve the reliability of
Newton's method. By adding regularization terms, the global convergence is also
guaranteed. 
A few other related works include \cite{dai2017conjugate, zhao2015riemannian, zhang2015maximization, gao2018new, lai2016localized}.

The ensemble-based density functional theory is especially important when the spectrum of the Hamiltonian matrix has no significant gaps. The KS energy minimization model is modified by allowing the charge density to contain more wave functions. Specifically,
$
\rho(r) = \sum_{i=1}^{p} f_i |\psi_i(r)|^2
$
where $p \geq p_e$ and the fraction occupation $0 \leq f_i \leq 1$ is to ensure that the total charge density of the total orbit is $p$, i.e.,
$
\sum_{i=1}^p f_i = p_e.
$
To calculate the fractional occupancy, the energy functional in the ensemble
model introduces a temperature $T$ associated with an entropy $\alpha R(f)$,
where $ \alpha := \kappa_B T$, $\kappa_B$ is the Boltzmann constant, $R(f)=\sum\limits_{i=1}^p s(f_i)$,
\bee \label{eq:def-s} s(t) = \begin{cases} t\ln t + (1-t)\ln(1-t), & 0 < t < 1,
	\\
	0, & \mbox{otherwise}.
\end{cases}
\eee
This method is often referred as the KS energy minimization model with temperature or the ensemble KS energy minimization model (EDFT).
Similar to the KS energy minimization model, by using the appropriate
discretization, the wavefunction can be represented with $X=[x_1, \ldots, x_p] \in \mathbb{C}^{n \times p}.$
The discretized charge density in EDFT can be written as
\bee \label{eq:rho}
\rho(X,f) := \diag(X \diag(f) X^*).
\eee
Obviously, $\rho(X,f)$ is real. The corresponding discretized energy functional is
\beaa \label{eq:KSDFT-eD} M(X,f)&=&\tr(\diag(f) X^* A X)
+ \frac{1}{2} \rho^\top
L^\dagger \rho + e^\top \epsilon_{\xc}(\rho) + \alpha R(f). \eeaa
The discretized EDFT model is
\be\label{prob:minKS} \begin{aligned} \min_{X \in \mathbb{C}^{n \times p}, f \in \R^p} & \quad
	M(X, f) \\
	\st \qquad &\quad X^* X = I_p, \\
	& \quad e^\top f = p_e \quad 0 \le f \le 1.
\end{aligned} \ee
Although SCF can be generalized to this model, its convergence is still not guaranteed.
 An equivalent simple model with only one-ball constraint is proposed  in
 \cite{ulbrich2015proximal}. It is solved by a proximal gradient method 
 such that the terms other than the entropy function term are linearized. An 
 explicit solution of the subproblem is then derived and the convergence of the
 algorithm is established. 

\subsection{Approximation models for integer programming}
Many optimization problems arising from data analysis are NP-hard integer programmings.
Spherical constraints and orthogonal constraints are often used to obtain
approximate solutions with high quality. Consider optimization problem over the
permutation matrices: 
\[ \min_{X \in \Pi_n} \quad f(X), \]
where $f(X): \R^{n \times n} \rightarrow \R^n$ is differentiable, and $\Pi_n$ is a collection of $n$-order permutation matrices
\[ \Pi_n := \{ X \in \R^{n \times n} ~:~ Xe = X^\top e = e, X_{ij} \in \{ 0,1 \} \}. \]
This constraint is equivalent to
\[ \Pi_n := \{ X \in \R^{n \times n} ~:~ X^\top X = I_n, X \ge 0 \}. \]
It is proved in \cite{jiang2016l_p} that it is equivalent to an
$L_p$-regularized optimization problem over the doubly stochastic matrices,
which is much simpler than the original problem. An estimation of the lower
bound of the non-zero elements at the stationary points are presented. Combining
with the cut plane method,  a novel gradient-type algorithm with negative
proximal terms is also proposed. 

Given $k$ communities $S_1, S_2, \ldots, S_k$ and the set of partition matrix $P_{n}^k$, where the partition matrix $X \in P_n^k$ means $X_{ij} = 1,~ i,j \in S_t, ~ t\in \{1, \ldots, k \}$, otherwise $X_{ij} = 0$.
Let $A$ be the adjacency matrix of the network, $d_i = \sum_j A_{ij}, i \in \{ 1, \ldots, n
\} $ and $\lambda = 1 / \|d\|_2$.
Define the matrix $C: = - (A - \lambda dd^\top)$. The community detection
problem in social networks is to find a partition matrix to maximize the
modularity function under the stochastic block model:
\be \label{prob:comm} \min_X \quad \iprod{C}{X} \quad \st \; X \in P_n^k. \ee
A SDP relaxation of \eqref{prob:comm} is
\beaa \label{prob:comm-sdp} \min_X & \quad & \iprod{C}{X} \\
\st & \quad & X_{ii} = 1, i = 1, \ldots, n, \\
& & 0 \leq X_{ij} \leq 1, \forall i,j, \\
& & X \succeq 0.
\eeaa
A sparse and low-rank completely positive relaxation technique is further
investigated in \cite{zhang2017sparse} to transform the model into an
optimization problem over multiple non-negative spheres:
\be \label{prob:comm-relax}
\begin{aligned}
	\min_{U \in \R^{n \times k}}\quad & \iprod{C}{UU^\top} \\
	\st \quad & \|u_{i}\|_2 = 1, i = 1, \ldots, n, \\
	& \|u_{i}\|_0 \leq p = 1, i = 1, \ldots, n, \\
	& U \geq 0,
\end{aligned}
\ee
where $1\leq p \leq r$ is usually taken as a small number so that $U$ can be
stored for large-scale data sets. The equivalence to the original problem is
proved theoretically and  an efficient row-by-row type block coordinate descent
method is proposed. 
In order to quickly solve  network problems whose dimension is more than 10
million, an asynchronous parallel algorithm is further developed.

\subsection{Deep learning}
Batch normalization is a very popular technique in deep neural networks. It
avoids internal covariance translation by normalizing the input of each neuron.
The space formed by its corresponding coefficient matrix can be regarded as a
Riemannian manifold. For a deep neural network, batch normalization usually
involves input processing before the nonlinear activation function. Define $x$
and $w$ as the outputs of the previous layer and the parameter vector for the
current neuron, the batch normalization of $z:= w^\top x$ can be written as
\[ \BN(z) = \frac{z - \E (z) }{\var(z)} = \frac{w^\top (x - \E(x))}{ \sqrt{w^ \top R_{xx} w } } = \frac{u^\top (x - \E(x))}{\sqrt{u^\top R_{xx} u }}, \]
where $u := w/ |w|$, $\E (z)$ is the expectation of random variable $z$ and $R_{xx}$ are the covariance matrices of $x$. From the definition, we have $\BN(w^\top x) = \BN(u^\top x )$ and
\[ \frac{\partial \BN(w^\top x) }{ \partial x} = \frac{\partial \BN(u^\top x)}{\partial x}, \quad \frac{\partial \BN(z) }{ \partial w} = \frac{1}{w} \frac{\partial \BN(z)}{\partial u}. \]
Therefore, the use of the batch standardization ensures that the model does not explode with large learning rates and that the gradient is invariant to linear scaling during propagation.

Since $\BN(c w^\top x) = \BN(w^\top x)$ holds for any constant $c$ , the optimization problem for deep neural networks using batch normalization can be written as
\[ \min_{X \in \Mcal} \quad L(X), \quad \Mcal = S^{n_1-1} \times \cdots S^{n_m-1} \times \ R^l, \]
where $L(X)$ is the loss function, $S^{n-1}$ is a sphere in $\R^n$ (can also be viewed as a Grassmann manifold), $n_1, \ldots , n_m$ are the dimensions of the weight vectors, $m$ is the number of weight vectors, and $l$ is the number of remaining parameters to be decided, including deviations and other weight parameters. For more information, we refer to \cite{cho2017riemannian}. 

\subsection{Sparse PCA} \label{subsec:nonsmooth-spca}
In the traditional PCA, the obtained  principle eigenvectors are usually not sparse, which leads to high computational cost for computing the principle components. Spare PCA \cite{jolliffe2003modified} wants to find principle eigenvectors with few non-zero elements. The mathematical formulation is
\be \label{prob:spca} \begin{aligned}
\min_{X \in \R^{n\times p}} \quad & -\tr(X^\top A^\top AX) + \rho \|X\|_1 \\
\st \quad & X^\top X = I_p,
\end{aligned} \ee
where $\|X\|_1 = \sum_{ij} |X_{ij}|$ and $\rho > 0$ is a trade-off parameter. When $\rho = 0$, this reduces to the traditional PCA problem. For $\rho >0$, the term $\|X\|_1$ plays a role to promote sparsity. Problem \eqref{prob:spca} is a non-smooth optimization problem on the Stiefel manifold. 

\subsection{Low-rank matrix completion} \label{subsec:nonsmooth-rlmc}
The low-rank matrix completion problem has important applications in computer vision, pattern recognitions, statistics, etc. It can be formulated as
\be \label{prob:lrmc} \begin{aligned}
	\min_X \quad & \rk(X) \\
	\st \quad & X_{ij} = A_{ij}, \; (i,j) \in \Omega,
\end{aligned} \ee
where $X$ is the matrix that we want to recover (some of its entries are known)
and $\Omega$ is the index set of observed entries. Due to the difficulty of the rank,
a popular approach is to relax it into a convex model using the nuclear norm.
The equivalence between this convex problem and the nonconvex problem
\eqref{prob:lrmc} is ensured under certain conditions.
Another way is to use a low rank decomposition on $X$ and then solve the
corresponding unconstrained optimization problem \cite{wen2012solving}. If the
rank of the ground-truth matrix $A$ is known, an alternative model for a fixed-rank matrix completion is  
\be \label{prob:fixrank-mc} \min_{X \in\R^{n\times p}} \;\; \|\mathbf{P}_{\Omega}(X - A)\|_F^2, \; \st \;\; \mathrm{rank}(X) = r,  \ee
 where $\mathbf{P}_{\Omega}$ is a projection with $\mathbf{P}_{\Omega}(X)_{ij} =
 X_{ij}, \; (i,j) \in \Omega$ and $0$ otherwise, and $r = \rk(A)$. The set
 $\Fr(m,n,r):= \{ X \in \R^{m \times n}~:~ \rk(X) = r \}$ is a matrix manifold,
 called fixed-rank manifold. The related geometry is analyzed in
 \cite{vandereycken2013low}. Consequently, problem \eqref{prob:fixrank-mc} can
 be solved by optimization algorithms on manifold. Problem
 \eqref{prob:fixrank-mc} can deal with Gaussian noise properly. For data sets
 with a few outliers,  the robust low-rank matrix completion problem (with the
 prior knowledge $r$) considers: 
\be \label{prob:r-fixrank-mc} \min_{X \in\R^{n\times p}} \;\; \|\mathbf{P}_{\Omega}(X - A)\|_1, \; \st \;\; \mathrm{rank}(X) = r,  \ee
where $\|X\|_1 =\sum_{i,j} |X_{ij}|$. Problem \eqref{prob:r-fixrank-mc} is a
non-smooth optimization problem on the fixed-rank matrix manifold. For some related algorithms for \eqref{prob:fixrank-mc} and \eqref{prob:r-fixrank-mc}, the readers can refer to \cite{wei2016guarantees,cambier2016robust}.

\subsection{Sparse blind deconvolution}\label{subsec:nonsmooth-scsbd}
Blind deconvolution is to recover a convolution kernel $a_0 \in \R^k$ and signal $x_0 \in \R^m$ from their convolution 
\[ y = a_0 \circledast x_0,  \]
where $y \in \R^m$. Since there are infinitely many pairs $(a_0, x_0)$
satisfying this condition, this problem is often ill-conditioned. To overcome
this issue, some regularization terms and extra constraints are necessary. The sphere-constrained sparse blind deconvolution reformulate the problem as
\[ \min_{a,x} \;\; \| y - a \circledast x\|_2^2 + \mu\|x\|_1, \; \; \st \; \; \|a\|_2 =1, \]
where $\mu$ is a parameter to control the sparsity of the signal $x$. This is a
non-smooth optimization problem on the product manifold of a sphere and $\R^m$.
Some related background and the corresponding algorithms can be found in \cite{zhang2017global}.

\subsection{Non-negative PCA}\label{subsec:cons-nnpca}
Since the principle eigenvectors obtained by the traditional PCA may not be
sparse, one can enforce the sparsity by adding non-negativity constraints. The problem is formulated as
\be \label{prob:npca} \min_{X \in \R^{n\times p}} \;\; \tr(X^\top A A^\top X) \; \; \st \;\; X^\top X = I_p,\; X \geq 0, \ee
where $A = [a_1, \ldots, a_k] \in \R^{n \times k}$ are given data points. Under
the constraints, the variable $X$ has at most one non-zero element in each row.
This actually helps to guarantee the sparsity of the principle eigenvectors. 
Problem \eqref{prob:npca} is an optimization problem with manifold and
non-negative constraints. Some related information can be found in \cite{zass2007nonnegative,montanari2016non}.
\subsection{$K$-means clustering}\label{subsec:cons-kmeans} 
 $K$-means clustering is a fundamental problem in data mining.
Given $n$ data points $(x_1, x_2, \ldots, x_n)$ where each data point is a
$d$-dimensional vector, $k$-means is to partition them into $k$ clusters $S:=\{
S_1, S_2, \ldots, S_k\}$ such that the within-cluster sum of squares is
minimized. Each data point belongs to the cluster with the nearest mean. The mathematical form is 
\be \label{prob:kmeans-1}  \min_S \; \sum_{i=1}^k\sum_{x \in S_i} \|x - c_i\|^2,  \ee
where $c_i = \frac{1}{\mathrm{card}(S_i)} \sum_{x \in S_i} x $ is the center of
$i$-th cluster and $\mathrm{card}(S_i)$ is the cardinality of $S_i$.
Equivalently, problem \eqref{prob:kmeans-1} can be written as
\cite{carson2017manifold,liu2017subspace,xie2018non}:
\be \label{prob:kmeans} \begin{aligned}
	\min_{Y \in \R^{n\times k}} \quad & \tr(Y^\top D Y) \\
	\st \quad & YY^\top \mathbf{1} = \mathbf{1}, \\
	\quad & Y^\top Y = I_k, \; Y\geq 0, 
\end{aligned}\ee
where $D_{ij}:=\|x_i - x_j\|^2$  is the squared Euclidean distance matrix.
Problem \eqref{prob:kmeans} is a minimization over the Stiefel manifold with linear constraints and non-negative constraints.

\section{Algorithms for manifold optimization} \label{sec:algorithm}
In this section, we introduce a few state-of-the-art algorithms for optimization
problems on Riemannian manifold. Let us start from the concepts of manifold optimization.

\subsection{Preliminaries on Riemannian manifold}
A $d$-dimensional manifold $\Mcal$ is a Hausdorff and second-countable topological
space, which is homeomorphic to the $d$-dimensional Euclidean space locally via a
family of charts. When the transition maps of intersecting charts are smooth,
the manifold $\Mcal$ is called a smooth manifold. Intuitively, the tangent space $T_x \Mcal $ at a point $x$ of a manifold $\Mcal$ is the set of the tangent vectors of all the curves at $x$. Mathematically, a tangent vector $\xi_x$ to $\M$ at $x$ is a mapping such that there exists a curve $\gamma$ on $\M$ with $\gamma(0) = x$, satisfying
\[ \xi_x u := \dot{\gamma}(0) u \triangleq \left. \frac{\mathrm{d}(u(\gamma(t)))}{\mathrm{d}t}\right|_{t=0}, \quad \forall~u \in \Im_x(\M), \]
where $\Im_x(\M)$ is the set of all
real-valued functions $f$ defined in a neighborhood of $x$ in $\M$.
Then, the tangent space $T_x\Mcal$ to $\Mcal$ is defined as the set of all
tangent vectors to $\M$ at $x$.  If $\Mcal$ is equipped with a smoothly varied
inner product $g(\cdot, \cdot):=\iprod{\cdot}{\cdot}_x$ on the tangent space,
then $(\Mcal,g)$ is a Riemannian manifold. In practice, different Riemannian
metrics may be investigated to design efficient algorithms. The Riemannian
gradient $\grad f(x)$ of a function $f$ at $x$ is an unique vector in $T_x \Mcal$ satisfying
\[ \iprod{\grad f(x)}{\xi}_x = Df(x)[\xi], \quad \forall \xi \in \TM, \]
where $Df(x)[\xi]$ is the derivative of $f(\gamma(t))$ at $t = 0$, $\gamma(t) $
is any curve on the manifold that satisfies $\gamma( 0) = x$ and
$\dot{\gamma}(0) = \xi$. The Riemannian Hessian $\hess f(x)$ is a mapping from
the tangent space $\TM$ to the tangent space $\TM$:
\[ \hess f(x)[\xi] := \tilde{\nabla}_\xi \grad f(x) \]
where $\tilde{\nabla}$ is the Riemannian connection \cite{opt-manifold-book}.
For a function $f$ defined on the manifold, if it can be extended to the ambient
Euclidean space $\R^{n \times p}$, we have its Riemannian gradient $\grad f$ and
Riemannian Hessian $\hess f$:
\be 
\begin{aligned}
	\grad f(X) & = \bP (\nabla f(X)), \\
	\hess f(X)[U] & = \bP(D \grad f(X)[U]) \\
\end{aligned} \ee
where $D$ is the Euclidean derivative. More detailed information on the related
backgrounds can be found in \cite{opt-manifold-book}.

We next briefly introduce some typical manifolds. 
\begin{itemize}
	\item Sphere $\mathrm{Sp}(n-1):= \{ x \in \R^{n} ~:~ \|x\|_2 = 1 \}$. Let $x(t)$ with $x(0) = x$ be a curve on sphere, i.e., $x(t)^\top x(t) = 1$ for all $t$. Taking the derivatives with respect to $t$, we have 
	\[ \dot{x}(t)^\top x(t) + x(t)^\top \dot{x}(t) = 0. \]
	At $t = 0$, we have $\dot{x}(0)x + x^\top \dot{x}(0) = 0$. Hence, the tangent space is 
	\[ T_x \mathrm{Sp}(n-1) = \{ z~:~z^\top x = 0 \}.  \]
	The projection operator is defined as
	\[ \mathbf{P}_{T_x \mathrm{Sp}(n-1)} (z) = (I - xx^\top )z.  \]
	For a function defined on $\mathrm{Sp}(n-1)$ with respect to the Euclidean metric $g_x(u,v) = u^\top v, \; u,v \in T_{x} \mathrm{Sp}(n-1)$,
	its Riemannian gradient and Hessian at $x$ can be represented by 
	\[ \begin{aligned}
            \grad f(x) & = \mathbf{P}_{T_x \mathrm{Sp}(n-1)}(\nabla f(x)), \\ 
	\hess f(x)[u] & = \mathbf{P}_{T_x \mathrm{Sp}(n-1)}(\nabla^2 f(x)[u] - ux^\top \nabla f(x)), \; u \in T_x \mathrm{Sp}(n-1).
	\end{aligned} \]
	\item Stiefel manifold $\St(n,p):= \{ X \in \R^{n \times p} \,:\, X^\top
	X = I_p \}$. By a similar calculation as the spherical case, we have its
    tangent space:
	\[ T_X \St(n,p) = \{ Z\,:\, Z^\top X + X^\top Z = 0 \}. \]
	The projection operator onto $T_X \St(n,p)$ is
	\[ \mathbf{P}_{T_X \mathrm{St}(n,p)} (Z) = Z - X\sym(X^\top Z), \]
	where $\sym(Z):=(Z + Z^\top)/2$. Given a function defined on $\St(n,p)$ with respect to the Euclidean metric $g_X(U,V) = \tr(U^\top V),\; U,V \in T_X \mathrm{St}(n,p)$, its Riemannian gradient and Hessian at $X$ can be represented by 
	\[ \begin{aligned}
            \grad f(X) & = \mathbf{P}_{T_X \St(n,p)}(\nabla f(X)), \\ 
	\hess f(X)[U] & = \mathbf{P}_{T_X \St(n,p)}(\nabla^2 f(X)[U] - U\sym(X^\top \nabla f(X))), \; U \in T_X \St(n,p).
	\end{aligned} \] 
	\item Oblique manifold $ \mathrm{Ob}(n,p) := \{ X \in \R^{n \times p} \mid \mbox{diag}(X^\top X) = e\}$. Its tangent space is 
	\[ T_X \mathrm{Ob}(n,p) = \{ Z\,:\,\diag(X^\top Z) = 0 \}. \]
	The projection operator onto $T_X \mathrm{Ob}(n,p)$ is
	\[ \mathbf{P}_{T_X \mathrm{Ob}(n,p)} = Z - X \Diag(\diag(X^\top Z)). \]
	Given a function defined on $\mathrm{Ob}(n,p)$ with respect to the Euclidean metric, its Riemannian gradient and Hessian at $X$ can be represented by 
	\[ \begin{aligned}
            \grad f(X) & = \mathbf{P}_{T_X \mathrm{Ob}(n,p)}(\nabla f(X)), \\ 
	\hess f(X)[U] & = \mathbf{P}_{T_X \mathrm{Ob}(n,p)}(\nabla^2 f(X)[U] - U\Diag(\diag(X^\top \nabla f(X)))), 
	\end{aligned} \] 
	with $U \in T_X \mathrm{Ob}(n,p)$.
	\item Grassmann manifold $\mathrm{Grass}(n,p) := \{ \mathrm{span}(X)\,:\, X
        \in \R^{n \times p}, X^\top X = I_p \}$. It denotes the set of all
        $p$-dimensional subspaces of $\R^n$. This manifold is different from
        other manifolds mentioned above. It is a quotient manifold since each
        element is an equivalent class of $n\times p$ matrices. From the definition of $\mathrm{Grass}(p,n)$, the equivalence relation $\sim$ is defined as
	\[ X \sim Y \Leftrightarrow \exists Q \in \R^{p\times p} \mathrm{~with~} Q^\top Q = QQ^\top = I, \; \st \; Y = XQ.  \]
	Its element is of the form 
	\[ [X]:= \{ Y \in \R^{n\times p}: Y^\top Y = I, Y \sim X \}. \]
	Then $\mathrm{Grass}(n,p)$ is a quotient manifold of $\St(n,p)$, i.e.,
    $\St(n,p)/ \sim$. Due to this equivalence, a tangent vector $\xi$ of $T_X
    \mathrm{Grass}(n,p)$ may have many different representations in its
    equivalence class. To find the unique representation, a horizontal space \cite[Section 3.5.8]{opt-manifold-book} is introduced. For a given $X \in \R^{n\times p}$ with $X^\top X = I_p$, the horizontal space is 
	\[ \mathcal{H}_X \mathrm{Grass}(n,p) = \{Z\,:\, Z^\top X = 0 \}. \]  
	Here, a function of the horizontal space is similar to the tangent space when computing the Riemannian gradient and Hessian.
	We have the projection onto the horizontal space
	\[ \mathbf{P}_{\mathcal{H}_X \mathrm{Grass}(n,p)}(Z) = Z -XX^\top Z. \]
	Given a function defined on $\mathrm{Grass}(n,p)$ with respect to the Euclidean metric $g_{X} = \tr(U^\top V), \, U,V \in \mathcal{H}_X \mathrm{Grass}(n,p)$, its Riemannian gradient and Hessian at $X$ can be represented by 
	\[ \begin{aligned}
            \grad f(X) & = \mathbf{P}_{\mathcal{H}_X \mathrm{Grass}(n,p)}(\nabla f(X)), \\ 
	\hess f(X)[U] & = \mathbf{P}_{\mathcal{H}_X \mathrm{Grass}(n,p)}(\nabla^2 f(X)[U] - UX^\top \nabla f(X)), \; U \in T_X \mathrm{Grass}(n,p).
	\end{aligned} \]
	\item Fixed-rank manifold $\Fr(n,p,r) :=\{X \in \R^{ n \times p}\,:\, \rk(X)
        = r \}$ is a set of all $n\times p$ matrices of rank $r$. Using the
        singular value decomposition (SVD), this manifold can be represented equivalently by 
	\[ \Fr(n,p,r) = \{ U\Sigma V^\top\,:\, U \in \St(n,r),\, V\in \St(p,r),\, \Sigma = \diag(\sigma_i) \},  \]
	where $\sigma_1 \geq \cdots \geq \sigma_k > 0$. Its tangent space at $X = U \Sigma V^\top$ is 
	\be \label{eq:tangent-fr} \begin{aligned}
	T_X \Fr(n,p,r) = & \left \{ [U,U_{\bot}] \begin{pmatrix}
	\R^{r\times r} & \R^{r \times (p-r)} \\ \R^{(n-r)\times r} & 0_{(n-r) \times (p-r)} 
	\end{pmatrix} [V, V_\bot]^\top \right \} \\
	= & \{ UMV^\top + U_pV^\top + UV_p^\top\,:\, M \in \R^{r \times r}, \\
	& U_p \in \R^{n\times r}, U_p U = 0, V_p \in \R^{p\times r},\, V_p^\top V = 0  \},
	\end{aligned} \ee
	where $U_\bot$ and $V_{\bot}$ are the orthogonal complements of $U$ and $V$, respectively. 
	The projection operator onto the tangent space is 
	\[ \mathbf{P}_{T_X \Fr(n,p,r)}(Z) = P_U ZP_V + P_U^\bot Z P_V + P_U Z P_V^\bot, \]
	where $P_U = UU^\top$ and $P_U^\bot = I - P_U$. Comparing the representation with \eqref{eq:tangent-fr}, we have 
	\[ M(Z;X) := U^\top Z X,\; U_p(Z;X) = P_U^\bot ZV,\; V_p(Z;X) = P_V^\bot Z^\top U. \]
	Given a function defined on $\Fr(n,p,r)$ with respect to the Euclidean metric $g_X(U,V) = \tr(U^\top V)$, its Riemannian gradient and Hessian at $X = U\Sigma V$ can be represented by 
	\[ \begin{aligned}
            \grad f(X) & = \mathbf{P}_{T_X \Fr(n,p,r)}(\nabla f(X)), \\ 
	\hess f(X)[H] & = U\hat{M}V^\top + \hat{U}_pV^\top + U\hat{V}_p^\top, \; H \in T_X \Fr(n,p,r),
	\end{aligned} \]
	where $\hat{M} = M(\nabla^2 f(X)[H], X), \; \hat{U}_p = U_p(\nabla^2 f(X)[H];X) + P_U^\bot \nabla f(X) V_p(H;X) $ $/\Sigma, \; \hat{V}_p = V_p(\nabla^2 f(X)[H];X) + P_V^\bot  \nabla f(X) U_p(H;X)/\Sigma$. 
	\item The set of symmetric positive definite matrices, i.e., $\mathrm{SPD}(n) =\{ X \in \R^{n\times n} \,:\, X^\top = X, \, X \succ 0 \} $ is a manifold. Its tangent space at $X$ is 
	\[ T_X \mathrm{SPD}(n) = \{ Z: Z^\top = Z \}. \]
	We have the projection onto $T_X \mathrm{SPD}(n)$: 
	\[ \mathbf{P}_{T_X \mathrm{SPD}(n)}(Z) = (Z^\top + Z)/2.  \]
	Given a function defined on $\mathrm{SPD}(n,p)$ with respect to the
    Euclidean metric $g_X(U,V) = \tr(U^\top V), \, U, V\in T_X \mathrm{SPD}(n)$, its Riemannian gradient and Hessian at $X$ can be represented by 
	\[ \begin{aligned}
            \grad f(X) & = \mathbf{P}_{T_X \mathrm{SPD}(n)}(\nabla f(X)), \\ 
	\hess f(X)[U] & = \mathbf{P}_{T_X \mathrm{SPD}(n)}(\nabla^2 f(X)[U]), \; U \in T_X \mathrm{SPD}(n).
	\end{aligned} \]
	\item The set of rank-$r$ symmetric positive semidefinite matrices, i.e., $\mathrm{FrPSD}(n,r)= \{X \in\R^{n\times n}\,:\, X = X^\top, \, X\succeq 0,\, \rk(X) = r \}$. This manifold can be reformulated as
	\[ \mathrm{FrPSD}(n,r) = \{ YY^\top\,:\, Y \in \R^{n\times r}, \rk(Y) = k  \}, \]
	which is a quotient manifold. The horizontal space at $Y$ is 
	\[ T_Y{\mathcal{H}_{\mathrm{FrPSD}(n,r)}} = \{ Z \in \R^{n\times r}\,:\, Z^\top Y = Y^\top Z  \}. \]
	We have the projection operator onto $T_Y{\mathcal{H}_{\mathrm{FrPSD}(n,r)}}$
	\[ \mathbf{P}_{T_Y{\mathcal{H}_{\mathrm{FrPSD}(n,r)}}}(Z) = Z - Y\Omega, \]
	where the skew-symmetric matrix $\Omega$ is the unique solution of the Sylvester equation $\Omega(Y^\top Y) + (Y^\top Y)\Omega = Y^\top Z - Z^\top Y$. Given a function $f$ with respect to the Euclidean metric $g_Y(U,V) = \tr(U^\top V),\, U,V \in T_Y{\mathcal{H}_{\mathrm{FrPSD}(n,r)}}$, its Riemannian gradient and Hessian can be represented by 
	\[ \begin{aligned}
            \grad f(Y) & = \nabla f(Y), \\ 
	\hess f(X)[U] & = \mathbf{P}_{T_Y{\mathcal{H}_{\mathrm{FrPSD}(n,r)}}}(\nabla^2 f(Y)[U]), \; U \in {T_Y{\mathcal{H}_{\mathrm{FrPSD}(n,r)}}}.
	\end{aligned} \]
 	
\end{itemize}

\subsection{Optimality conditions}
We next present the optimality conditions for manifold optimization problem 
in the following form
\be  \label{prob1} \begin{aligned}
	\min_{x \in \M}  \quad & f(x), \\
	\st \quad & c_i(x) = 0, \; i \in \mathcal{E} :=\{ 1, \ldots, \ell \} \\
 & c_i(x) \geq 0, \; i \in \mathcal{I}:=\{ \ell + 1, \ldots, m \}, 
\end{aligned}
\ee
where $\mathcal{E}$ and $\mathcal{I}$ denote the index sets of equality constraints and inequality constraints, respectively, and $ c_i : \M \rightarrow \R, \; i \in \mathcal{E} \cup \mathcal{I}$ are smooth functions on $\M$.  
We mainly adopt the notions in \cite{yang2014optimality}. By keeping the manifold constraint, the Lagrangian function of \eqref{prob1} is 
\[ \mathcal{L}(x,\lambda) = f(x) - \sum_{i\in \mathcal{E} \cup \mathcal{I}} \lambda_i c_i(x), \; x\in \M, \]
where $\lambda_i, \; i\in \mathcal{E} \cup \mathcal{I}$ are the Lagrangian multipliers. Here, we notice that the domain of $\mathcal{L}$ is on the manifold $\M$. Let $\mathcal{A}(x) := \mathcal{E} \cup \{ i \in \mathcal{I} ~:~ c_i(x) = 0 \}$. Then the linear independence constraint qualifications (LICQ) for problem \eqref{prob1} holds at $x$ if and only if 
\[ \grad c_i(x),\; i\in \mathcal{A}(x) \mathrm{~is~linear~independent~on~} T_x \M. \] 
Then the first-order necessary conditions can be described as follows.
\begin{theorem}[First-order necessary optimality conditions (KKT conditions)]
    
    Suppose that $x^*$ is a local minima of \eqref{prob1} and that the LICQ holds at $x^*$, then there exist Lagrangian multipliers $\lambda_i^*, i \in \mathcal{E} \cup \mathcal{I}$ such that the following KKT conditions hold: 
    \be \label{kkt}  \begin{aligned}
    \grad f(x^*) + \sum_{i \in \mathcal{E}\cup \mathcal{I}} \lambda_i^* \grad
    c_i(x^*) & = 0, \\
    c_i(x^*) & = 0, \; \forall i \in \mathcal{E}, \\
    c_i(x^*) \geq 0, \; \lambda_i^* \geq 0, \; \lambda_i^* c_i(x^*) & =0, \; \forall i \in \mathcal{I}. 
    \end{aligned} \ee
\end{theorem}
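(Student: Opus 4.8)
The plan is to transfer the Euclidean first-order theory to the tangent space $T_{x^*}\M$, using the Riemannian gradient as the surrogate for the Euclidean gradient. The one bridge I need is the defining identity $\iprod{\grad g(x^*)}{\xi}_{x^*}=Dg(x^*)[\xi]$, valid for every smooth $g$ and every $\xi\in T_{x^*}\M$, which rewrites each directional-derivative condition as an inner-product condition among the fixed vectors $\grad f(x^*)$ and $\grad c_i(x^*)$ in $T_{x^*}\M$. Let $\mathcal{C}=\{x\in\M:\ c_i(x)=0,\ i\in\mathcal{E};\ c_i(x)\ge 0,\ i\in\mathcal{I}\}$ be the feasible set and let $\mathcal{A}(x^*)$ be the active set. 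For any smooth curve $\gamma$ on $\M$ with $\gamma(0)=x^*$ remaining in $\mathcal{C}$ for small $t\ge 0$, local minimality of $f$ forces $\frac{d}{dt}f(\gamma(t))\big|_{t=0^+}\ge 0$, that is $\iprod{\grad f(x^*)}{\xi}_{x^*}\ge 0$ for $\xi=\dot\gamma(0)$. The set of such $\xi$ is the tangent cone $T_{\mathcal{C}}(x^*)$, and it always satisfies $T_{\mathcal{C}}(x^*)\subseteq\mathcal{T}_{\mathrm{lin}}$, where $\mathcal{T}_{\mathrm{lin}}:=\{\xi\in T_{x^*}\M:\ \iprod{\grad c_i(x^*)}{\xi}_{x^*}=0,\ i\in\mathcal{E};\ \iprod{\grad c_i(x^*)}{\xi}_{x^*}\ge 0,\ i\in\mathcal{A}(x^*)\cap\mathcal{I}\}$ is the linearized cone.

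The crux is the reverse inclusion $\mathcal{T}_{\mathrm{lin}}\subseteq T_{\mathcal{C}}(x^*)$, i.e. that LICQ is a genuine constraint qualification; this is the step I expect to be the main obstacle. I would prove it by working in a smooth chart $\varphi$ around $x^*$ (equivalently, fixing a retraction and an orthonormal basis of $T_{x^*}\M$), so that $f\circ\varphi^{-1}$ and the $c_i\circ\varphi^{-1}$ become ordinary smooth functions on an open subset of $\R^d$, $d=\dim\M$. Since $\varphi$ is a diffeomorphism it identifies $T_{x^*}\M$ with $\R^d$ and carries the family $\{\grad c_i(x^*)\}_{i\in\mathcal{A}(x^*)}$ to a family of the same rank, so LICQ passes to the chart as full rank of the active constraint Jacobian. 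Given $\xi\in\mathcal{T}_{\mathrm{lin}}$, the classical implicit-function-theorem construction then solves the active equality constraints for a complementary block of coordinates to build a smooth feasible curve realizing $\xi$ to first order; pushing this curve back through $\varphi^{-1}$ produces a curve in $\mathcal{C}$ with velocity $\xi$, giving $\xi\in T_{\mathcal{C}}(x^*)$. This is exactly where the linear independence of the active gradients is consumed.

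With both inclusions in hand, local minimality yields the variational inequality $\iprod{\grad f(x^*)}{\xi}_{x^*}\ge 0$ for every $\xi\in\mathcal{T}_{\mathrm{lin}}$. I would finish by applying Farkas' lemma (a theorem of the alternative) in the finite-dimensional inner product space $(T_{x^*}\M,\iprod{\cdot}{\cdot}_{x^*})$ to the vectors $\{\grad c_i(x^*)\}_{i\in\mathcal{A}(x^*)}$: this variational inequality is precisely the polarity condition that places $\grad f(x^*)$ in the subspace spanned by the equality gradients together with the cone of nonnegative combinations of the active inequality gradients. Reading off the coefficients, with the sign convention fixed by the Lagrangian $\cL(x,\lambda)=f(x)-\sum_i\lambda_i c_i(x)$, yields multipliers $\lambda_i^*$ with $\lambda_i^*\ge 0$ for active $i\in\mathcal{I}$ and the stationarity relation of \eqref{kkt}; setting $\lambda_i^*=0$ for inactive inequalities gives complementary slackness $\lambda_i^* c_i(x^*)=0$, while LICQ guarantees that the representation exists and that the multipliers are unique. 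The primal feasibility relations in \eqref{kkt} hold simply because $x^*\in\mathcal{C}$.
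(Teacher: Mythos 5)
Your proof is correct and follows essentially the same classical route as the source to which the paper defers for this theorem (\cite{yang2014optimality}): the paper itself states the result without proof, and the standard argument is exactly your three-step plan --- reduce to the Euclidean setting in a chart (where LICQ is preserved because the chart differential is a linear isomorphism and the metric identifies gradients with differentials), use the implicit-function-theorem curve construction to show the linearized cone is contained in the set of velocities of feasible curves, and then apply Farkas' lemma in the inner-product space $\left(T_{x^*}\M, \iprod{\cdot}{\cdot}_{x^*}\right)$. One caveat worth recording: your Farkas step correctly yields $\grad f(x^*) - \sum_{i\in\mathcal{E}\cup\mathcal{I}} \lambda_i^* \grad c_i(x^*) = 0$ with $\lambda_i^* \geq 0$ for $i \in \mathcal{I}$, which is the form consistent with the paper's Lagrangian $\mathcal{L}(x,\lambda) = f(x) - \sum_i \lambda_i c_i(x)$, whereas the plus sign printed in the stationarity line of \eqref{kkt} is a sign typo in the paper (with $c_i \geq 0$ and $\lambda_i^* \geq 0$ it would be inconsistent), so your sign convention is the right one.
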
  

Let $x^*$ and $\lambda_i^*, i \in \mathcal{E} \cup \mathcal{I}$ be one of the
solution of the KKT conditions \eqref{kkt}. Similar to the case without the
manifold constraint, we define a critical cone $\mathcal{C}(x^*, \lambda^*)$ as
\[ w \in \mathcal{C}(x^*,\lambda^*) \Leftrightarrow \left\{  \begin{aligned}
w \in T_{x^*}\M,&\\
\iprod{\grad c_i(x^*)}{w} & = 0, \; \forall i \in \mathcal{E}, \\
\iprod{\grad c_i(x^*)}{w} & = 0,\; \forall i \in \mathcal{A}(x^*)\cap
\mathcal{I} \mathrm{~with~} \lambda_i^* >0,\\
\iprod{\grad c_i(x^*)}{w} & \geq 0, \; \forall i \in \mathcal{A}(x^*)\cap \mathcal{I} \mathrm{~with~} \lambda_i^* =0.
\end{aligned} \right. \]
Then we have the following second-order necessary and sufficient conditions. 
\begin{theorem}[Second-order optimality conditions]
	
	\begin{itemize}
		\item Second-order necessary conditions: \\
		Suppose that $x^*$ is a local minima of \eqref{prob1} and the LICQ holds at $x^*$. Let $\lambda^*$ be the multipliers such that the KKT conditions \eqref{kkt} hold. Then we have
		\[ \iprod{\mathrm{Hess}_x \mathcal{L}(x^*,\lambda^*)[w]}{w} \geq 0, \; \forall w \in \mathcal{C}(x^*, \lambda^*), \]
		where $\mathrm{Hess}_x \mathcal{L}(x^*,\lambda^*)$ is the Riemannian Hessian of $\mathcal{L}$ with respect to $x$ at $(x^*,\lambda^*)$.
		\item Second-order sufficient conditions:\\
		Suppose that $x^*$ and $\lambda^*$ satisfy the KKT conditions \eqref{kkt}. If we further have
		\[ \iprod{\mathrm{Hess}_x \mathcal{L}(x^*,\lambda^*)[w]}{w} > 0, \; \forall w \in \mathcal{C}(x^*, \lambda^*), \; w \ne 0, \]
		then $x^*$ is a strict local minima of \eqref{kkt}. 
	\end{itemize}
\end{theorem}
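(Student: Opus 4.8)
The plan is to reduce both statements to the classical second-order optimality theory for nonlinear programs in Euclidean space by pulling the problem back to the tangent space $T_{x^*}\M$, in the spirit of \cite{yang2014optimality}. I would fix a retraction $R_{x^*} : T_{x^*}\M \to \M$ with $R_{x^*}(0) = x^*$ and $DR_{x^*}(0) = \mathrm{id}$ (for concreteness the exponential map of the metric), choose an orthonormal basis of $(T_{x^*}\M, \iprod{\cdot}{\cdot}_{x^*})$ so as to identify $T_{x^*}\M$ with $\R^d$, $d = \dim \M$, and set $\hat f := f \circ R_{x^*}$, $\hat c_i := c_i \circ R_{x^*}$ on a neighborhood of $0$. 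Since $R_{x^*}$ is a local diffeomorphism fixing $x^*$, feasibility and local minimality transfer verbatim, so $0$ is a local minimizer of the Euclidean program $\min \hat f$ subject to $\hat c_i = 0$ for $i \in \mathcal{E}$ and $\hat c_i \geq 0$ for $i \in \mathcal{I}$.

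First I would verify that all first-order data match. Because $DR_{x^*}(0) = \mathrm{id}$ and the basis is orthonormal for $\iprod{\cdot}{\cdot}_{x^*}$, the Euclidean gradients $\nabla \hat f(0)$ and $\nabla \hat c_i(0)$ are exactly the coordinate representations of $\grad f(x^*)$ and $\grad c_i(x^*)$, so that $\nabla \hat c_i(0)^\top w = \iprod{\grad c_i(x^*)}{w}$ for every $w$. Consequently LICQ for \eqref{prob1} at $x^*$ coincides with LICQ for the pullback at $0$, the Riemannian KKT system \eqref{kkt} coincides with the Euclidean KKT system with the same multipliers $\lambda^*$, and the critical cone $\mathcal{C}(x^*,\lambda^*)$ is carried isometrically onto the classical critical cone of the pullback.

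The crux, and the step I expect to be the main obstacle, is the second-order correspondence: I must show that $\iprod{\mathrm{Hess}_x \mathcal{L}(x^*,\lambda^*)[w]}{w} = \nabla^2 \hat{\mathcal{L}}(0)[w,w]$ for all $w \in T_{x^*}\M$, where $\hat{\mathcal{L}}(\cdot) := \mathcal{L}(R_{x^*}(\cdot),\lambda^*)$. Differentiating $t \mapsto \mathcal{L}(R_{x^*}(tw),\lambda^*)$ twice at $t=0$ gives $\nabla^2 \hat{\mathcal{L}}(0)[w,w] = \iprod{\mathrm{Hess}_x \mathcal{L}(x^*,\lambda^*)[w]}{w} + \iprod{\mathrm{grad}_x \mathcal{L}(x^*,\lambda^*)}{\frac{d^2}{dt^2}R_{x^*}(tw)|_{t=0}}$, the extra term being the covariant acceleration of the retraction curve (equivalently, the Christoffel correction in coordinates). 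This is resolved precisely by the stationarity equation in \eqref{kkt}, which gives $\mathrm{grad}_x \mathcal{L}(x^*,\lambda^*) = \grad f(x^*) + \sum_i \lambda_i^* \grad c_i(x^*) = 0$, so the correction vanishes identically; with the exponential map the point is even clearer, since geodesics have zero covariant acceleration. Hence at the KKT point the two quadratic forms agree on all of $T_{x^*}\M$.

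With the first- and second-order data matched, both claims follow from the classical Euclidean theorem. For the necessary direction, the Euclidean second-order necessary condition yields $\nabla^2\hat{\mathcal{L}}(0)[w,w] \geq 0$ on the pullback critical cone, which translates to $\iprod{\mathrm{Hess}_x\mathcal{L}(x^*,\lambda^*)[w]}{w}\geq 0$ on $\mathcal{C}(x^*,\lambda^*)$. For the sufficient direction I would argue by contradiction as in the finite-dimensional case: if $0$ were not a strict local minimizer of $\hat f$, there would exist feasible $y_k \to 0$ with $\hat f(y_k) \le \hat f(0)$; passing to a subsequence, $y_k/\|y_k\|$ converges to a unit $\bar w$, which one shows lies in the critical cone and satisfies $\nabla^2\hat{\mathcal{L}}(0)[\bar w,\bar w]\le 0$, contradicting the assumed positive definiteness. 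Mapping $\bar w$ back through $R_{x^*}$ yields the conclusion on $\M$. The only genuinely manifold-specific work is the Hessian identification of the previous paragraph; everything else is a faithful transcription of the Euclidean theory through the chart.
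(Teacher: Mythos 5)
Your proposal is correct, but note that there is nothing in the paper to compare it against line by line: this is a survey, and the theorem is stated without proof, with the reader deferred to \cite{yang2014optimality} (``We mainly adopt the notions in \cite{yang2014optimality}'', ``For more details, we refer the reader to \cite{yang2014optimality}'').

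Relative to that cited treatment, which works intrinsically (feasible curves constructed under LICQ, Taylor expansion of $\mathcal{L}$ along them, and a normalized-sequence compactness argument for sufficiency), your chart-based transcription is a genuinely different but equally valid route: you buy the Euclidean second-order theorems wholesale, at the price of one manifold-specific lemma, and you identify that lemma correctly. The crux is exactly where you put it: for the pullback $\hat{\mathcal{L}} = \mathcal{L}(R_{x^*}(\cdot),\lambda^*)$ one has
\[
\nabla^2 \hat{\mathcal{L}}(0)[w,w] \;=\; \iprod{\mathrm{Hess}_x \mathcal{L}(x^*,\lambda^*)[w]}{w} \;+\; \iprod{\mathrm{grad}_x \mathcal{L}(x^*,\lambda^*)}{\tfrac{D}{dt}\dot{\gamma}_w(0)},
\]
with $\gamma_w(t) = R_{x^*}(tw)$, and the correction is annihilated by the stationarity equation in \eqref{kkt} — so \emph{any} retraction works, not only a second-order one, and with the exponential map the correction is zero curve by curve. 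Your first-order matching (gradients, LICQ, KKT, critical cone) under the orthonormal identification of $T_{x^*}\M$ with $\R^d$ is also right, since $DR_{x^*}(0)=\mathrm{id}$ makes $\nabla \hat{c}_i(0)$ the coordinate vector of $\grad c_i(x^*)$. Three small points to tidy. First, write the correction term with the covariant acceleration $\frac{D}{dt}\dot{\gamma}_w(0)$ (or Christoffel symbols in the chart), as your parenthetical indicates, rather than the raw expression $\frac{d^2}{dt^2}R_{x^*}(tw)\vert_{t=0}$, which has no chart-free meaning on an abstract manifold. Second, in the sufficiency contradiction make explicit the inequality $\hat{\mathcal{L}}(y_k) \le \hat f(y_k) \le \hat f(0) = \hat{\mathcal{L}}(0)$ (from $\lambda_i^* \ge 0$ and feasibility of $y_k$), which is what feeds the Taylor expansion of $\hat{\mathcal{L}}$ and gives $\nabla^2\hat{\mathcal{L}}(0)[\bar w,\bar w]\le 0$; and note that strict local minimality transfers back through the local diffeomorphism $R_{x^*}$. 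Third, the conclusion you prove is strict local minimality for \eqref{prob1}, which is the intended claim — the statement's final reference to \eqref{kkt} is a typo in the paper, not a defect in your argument.
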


Suppose that we have only the manifold constraint, i.e., $\mathcal{E} \cup
\mathcal{I}$ is empty. For a smooth function $f$ on the manifold $\Mcal$, the
optimality conditions take a similar form to the Euclidean unconstrained case. Specifically, 
if $x^*$ is a first-order stationary point, then it holds that
\[ \grad f(x^*) = 0.\]
If $x^*$ is a second-order stationary point, then
\[ \grad f(x^*) = 0, \quad \hess f(x^*) \succeq 0. \]
If $x^*$ satisfies 
\[ \grad f(x^*) = 0, \quad \hess f(x^*) \succ 0, \]
then $x^*$ is a strict local minimum. For more details, we refer the reader to \cite{yang2014optimality}.

\subsection{First-order type algorithms} \label{subsec:alg-1st}
From the perspective of Euclidean constrained optimization problems, there are
many standard algorithms which can solve this optimization problem on manifold.
However, since the intrinsic structure of manifolds is not considered, these
algorithms may not be effective in practice. By doing curvilinear search along
the geodesic, a globally convergent gradient descent method is proposed in
\cite{gabay1982minimizing}. For Riemannian conjugate gradient (CG) methods
\cite{smith1994optimization}, the parallel translation is used to construct the conjugate directions. Due to the difficulty of calculating geodesics (exponential maps) and parallel translations,  computable retraction and vector transport operators are proposed to approximate the exponential map and the parallel translation \cite{opt-manifold-book}. Therefore, more general Riemannian gradient descent methods and CG methods together with convergence analysis are obtained in \cite{opt-manifold-book}. These algorithms have been successfully applied to various applications \cite{vandereycken2013low, kressner2014low}. Numerical experiments exhibit the advantage of using geometry of the manifold. A proximal Riemannian gradient method is proposed in \cite{hu2018adaptive}. Specifically, the objective function is linearized using the first-order Taylor expansion on manifold and a proximal term is added. The original problem is then transformed into a series of projection problems on the manifold. For general manifolds, the existence and uniqueness of the projection operator can not be guaranteed. But when the given manifold satisfies certain differentiable properties, the projection operator is always locally well-defined and is also a specific retraction operator \cite{absil2012projection}.
 Therefore, in this case, the proximal Riemannian gradient method coincides with
 the Riemannian gradient method. By generalizing the adaptive gradient method in
 \cite{duchi2011adaptive}, an adaptive gradient method on manifold is also
 presented in \cite{hu2018adaptive}. 
 In particular, optimization over Stiefel manifold is an important special case
 of Riemannian optimization. Various efficient retraction operators, vector
 transport operators and Riemannian metric have been investigated to construct more practical gradient descent and CG methods \cite{wen2013feasible,jiang2015framework,zhu2017riemannian}. Non-retraction based first-order methods are also developed in \cite{gao2018new}. 

We next present a brief introduction of first-order algorithms for manifold optimization.
Let us start with the retraction operator $R$. It is a smooth mapping from the tangent bundle $TM := \cup_{x \in \Mcal} \TM $ to $\Mcal$, and satisfies
\begin{itemize}
    \item $R_x(0_x) = x$, $0_x$ is the zero element in the cut space $\TM$,
	\item $ DR_x(0_x)[\xi] = \xi, \; \forall \xi \in \TM$,
\end{itemize}
where $R_x$ is the retraction operator $R$ at $x$. The well-posedness of the retraction operator is shown in Section 4.1.3 of \cite{opt-manifold-book}.
The retraction operator provides an efficient way to pull the points from the
tangent space back onto the manifold. Let $\xi_k \in \TM$ be a descent
direction, i.e., $\iprod{\grad f(x_k)}{\xi_k} < 0$. Another important concept on
manifold is the vector transport operator $\Tcal$. It is a smooth mapping from
the product of tangent bundles $T\M \bigoplus T\M$ to the tangent bundle $T\M$,
and satisfies the following properties.
\begin{itemize}
	\item There exists a retraction $R$ associated with $\Tcal$, i.e., 
	\[ \Tcal_{\eta_x} \xi_x = \frac{d}{dt} R_x(\eta_x + t \xi_x) \mid_{t = 0}.  \]
	\item $\Tcal_{0_x} \xi_x = \xi_x$ for all $x \in \M$ and $\xi_x \in T_x \M$. 
	\item $\Tcal_{\eta_x}(a \xi_x + b \zeta_x) = a \Tcal_{\eta_x} \xi_x + b \Tcal_{\eta_x} \zeta_x$. 
\end{itemize}
The vector transport is a generalization of the parallel translation \cite[Section 5.4]{opt-manifold-book}. 
The general feasible algorithm framework on the manifold can be expressed as
\be \label{eq:scheme} x_{k+1} = R_{x_k} (t_k \xi_k), \ee
where $t_k$ is a well-chosen step size. Similar to the line search method in
Euclidean space, the step size $t_k$ can be obtained by the curvilinear search
on the manifold. Here, we take the Armijo search as an example. Given $\rho, \delta \in (0,1 )$, the monotone and nonmonotone search try to find the smallest integer $h$ to such that
 \begin{align} \label{eq:MLS-Armijo}
f(R_{x_k}( t_k \xi_k) ) &\le f(x_k) + \rho  t_k \iprod {\grad f(x_k)}
{\xi_k}_{x_k},\\
\label{eq:NMLS-Armijo}
f(R_{x_k}( t_k \xi_k) ) &\le C_k + \rho  t_k \iprod {\grad f(x_k)} {\xi_k}_{x_k},
\end{align}
respectively, where $\iprod{\grad f(x_k)}{\xi_k}_{x_k}:= g_{x_k}(\grad f(x_k), \xi_k)$, $t_k = \gamma_k \delta^h$ and $\gamma_k$ is an initial step size.
 The reference
value $C_{k+1}$ is a convex combination of  $C_k$ and
$f( x_{k+1})$ and is calculated via $C_{k+1} = (\varrho Q_k C_k +
f( x_{k+1} ))/Q_{k+1}$,
where $C_0=f(x_0)$, $ Q_{k+1} = \varrho Q_k +1$ and $Q_0=1$.
From the Euclidean optimization, we know that the Barzilai-Borwein (BB) step
size often accelerates the convergence. The BB step size can be generalized to
Riemannian manifold \cite{hu2018adaptive} as 
\be  \label{eq:bb} \gamma_k^{(1)} = \frac{\iprod{s_{k-1}}{s_{k-1}}_{x_k}}{|\iprod{s_{k-1}}{v_{k-1}}_{x_k}|} \quad \mbox{ or } \quad
\gamma_k^{(2)} = \frac{|\iprod{s_{k-1}}{v_{k-1}}_{x_k}|}{
	\iprod{v_{k-1}}{v_{k-1}}_{x_k}}, \ee
where
\bee \label{eq:rbb}  s_{k-1} = - t_{k-1} \cdot {\mathcal T}_{x_{k-1} \rightarrow x_k} ( \grad
f(x_{k-1})), \quad v_{k-1} = \grad f(x_k) + t_{k-1}^{-1} \cdot s_{k-1}, \eee
and ${\mathcal T}_{x_{k-1} \rightarrow x_k}: T_{x_{k-1}} \M \mapsto T_{x_k} \M$ denotes an appropriate vector transport mapping connecting $x_{k-1}$ and $x_k$; see \cite{opt-manifold-book,IanPor17}.	When $\M$ is a submanifold of an
Euclidean space, the Euclidean differences $s_{k-1} = x_{k} - x_{k-1}$ and
$v_{k-1} = \grad f(x_k) - \grad f(x_{k-1})$ are an alternative choice if the
Euclidean inner product is used in \cref{eq:bb}. This choice is often  
attractive since the vector transport is not needed \cite{wen2013feasible,hu2018adaptive}. We note that the differences between first- and second-order algorithms are mainly due to their specific ways of acquiring $\xi_k$. 

In practice, the computational cost and convergence behavior of different retraction operators differ a lot. Similarly, the vector transport plays an important role in CG methods and quasi-Newton methods (we will introduce them later).   
There are many studies on the retraction operators and vector transports. Here,
we take the Stiefel manifold $\St(n,p)$ as an example to introduce several
different retraction operators at the current point $X$ for a given  step size
$\tau$ and descent direction $-D$.
\begin{itemize}
	\item Exponential map \cite{EdelmanAriasSmith1999}
	\bee \label{equ:update:geodesic}
	R_X^{{\mathrm{geo}}}(-\tau D ) =\big[ X, \ Q \big]
	\exp \left(\tau\left[ \begin{array}{cc}-X^{\top} D & \ -R^{\top}\\R & 0\end{array} \right] \right ) \left[\begin{array}{c}I_p\\0\end{array}\right],
	\eee
	where $QR = - (I_n-XX^{\top})D$ is the QR decomposition of $-(I_n -
    XX^{\top})D$. This scheme needs to calculate an exponent of a $2p$-by-$2p$
    matrix	 and an QR decomposition of an $n$-by-$p$ matrix. From
    \cite{EdelmanAriasSmith1999}, an explicit form of parallel translation is unknown. 
	\item Cayley transform \cite{wen2013adaptive}
	\be \label{eq:wenyin}
	R_{X}^{{\mathrm{wy}}}(-\tau D)=X-\tau U \Big(I_{2p}+\frac{\tau}{2}V^{\top}U \Big)^{-1}V^{\top}X,
	\ee
	where $U=[P_XD, \,X]$, $V=[X,\, -P_X D] \in \mathbb{R}^{n \times (2p)}$ with $P_X :=(I-\half XX^\top)$. When $p < n/2$, this scheme is much cheaper than the exponential map. The associated vector transport is \cite{zhu2017riemannian} 
	\[ \Tcal_{\eta_X}^{\mathrm{wy}}(\xi_X) = \left( I - \half W_{\eta_X} \right)^{-1} \left( I + \half W_{\eta_X} \right) \xi_X, \,   W_Z = P_X \eta_X X - X \eta_X P_X, \]	
	\item Polar decomposition \cite{opt-manifold-book}
	\[ R_X^{\mathrm{pd}}(-\tau D) = (X -\tau D)(I_p + \tau^2 D^{\top}D)^{-1/2}. \]
	The computational cost is lower than the Cayley transform, but the Cayley transform gives a better approximation to the exponential map. The associated vector transport is then defined as  \cite{huang2013optimization}
	\[ \Tcal_{\eta_X}^{\mathrm{pd}} \xi_X = Y\Omega + (I - YY^\top) \xi_X (Y^\top(X+\eta_X))^{-1}, \]
	where $Y = R_{X} \eta_X $ and $\mathrm{vec}(\Omega) = (Y^\top(X+\eta_X)) \oplus (Y^\top(X+\eta_X))^{-1} \mathrm{vec}(Y^\top \xi_X - \xi_X^\top Y) $ and $\oplus$ is the Kronecker sum, i.e., $A \oplus B = A \otimes I + I \otimes B$ with Kronecker product $\otimes$. 
	 Numerical experiments show that more iterations may be required compared to the Cayley transform.
	\item QR decomposition
	\[ R_X^{\mathrm{qr}}(-\tau D) = \mbox{qr}(X - \tau D). \]
	It can be seen as an approximation of the polar decomposition. The main cost
    is the QR decomposition of a $n$-by-$p$ matrix. The associated vector transport is defined as \cite[Example 8.1.5]{opt-manifold-book}
	\[ \Tcal_{\eta_X}^{\mathrm{qr}} \xi_X = Y \rho_{skew} (Y^\top \xi_X (Y^\top(X + \eta_X))^{-1}) + (I -YY^\top) \xi_X(Y^\top (X + \eta_X))^{-1}, \]
	where $Y = R_X (\eta_X)$. 
\end{itemize}

The vector transport above requires an associated retraction. Removing the
dependence of the retraction, a new class of
vector transports is introduced in \cite{huang2015broyden}. Specifically, a jointly smooth operator
$\mathcal{L}(x,y) : T_x \M \rightarrow T_y \M$ is defined. In addition, $\mathcal{L}(x,x)$ is required to be an identity for all $x$. For a
$d$-dimensional submanifold $\M$ of $n$-dimensional Euclidean space, two popular
vector transports are defined by the projection \cite[Section 8.1.3]{opt-manifold-book}
\[ \mathcal{L}^{\mathrm{pj}} (x,y) \xi_x = \mathbf{P}_{T_y \M} (\xi_x), \]
and by parallelization \cite{huang2015broyden}
\[ \mathcal{L}^{\mathrm{pl}} (x,y)\xi_x = B_yB_x^\dagger \xi_x, \]
where $B: \mathcal{V} \rightarrow \R^{n \times d}: z \rightarrow B_z$ is a smooth tangent basis field defined on an open neighborhood $\mathcal{V}$ of $\M$ and $B_z^\dagger$ is the pseudo-inverse of $B_z$. With the tangent basis $B_z$, we can also represent the vector transport mentioned above intrinsically, which sometimes reduces computational cost significantly \cite{huang2017intrinsic}. 

To better understand Riemannian first-order algorithms, we present a Riemannian gradient method \cite{hu2018adaptive} in Algorithm \ref{alg:GBB}. One can easily see that the difference to the Euclidean case is an extra retraction step.
\begin{algorithm2e}[t] \label{alg:GBB} \caption{Riemannian gradient method} \label{alg:RLSBB}
	Input $x_0 \in \M$. Set $k=0$, $\gamma_{\min}\in [0,1], 
	\gamma_{\max}\ge 1$, $C_0 = f(x_0),\, Q_0 = 1$.\\
	\While{$\|\grad f(x_k)\| \neq 0$ }
	{
		Compute $ \eta_k = -\grad f(x_k)$. \\
		Calculate $\gamma_k$ according to \eqref{eq:bb} and set $\gamma_k=\max(\gamma_{\min}, \min(\gamma_k,\gamma_{\max} ) )$. Then, compute $C_k, \,Q_k$ and find a step size $t_k$ satisfying
		\eqref{eq:NMLS-Armijo}. \\
		Set $x_{k+1}\gets  R_{x_{k}}( t_k \eta_k)$. \\
		Set $k\gets k+1$.
	}
\end{algorithm2e}

The convergence of Algorithm \ref{alg:GBB} \cite[Theorem 1]{hu2017adaptive} is given as follows.
\begin{theorem} \label{thm:first-order-am}
	Let  $\{x_k\}$ be a sequence generated by Algorithm \ref{alg:RLSBB} using the nonmonotone line search \cref{eq:NMLS-Armijo}. Suppose that $f$ is continuously differentiable on the manifold $\mathcal{M}$. Then, every accumulation point $x_*$ of the sequence $\{x_k\}$ is a stationary point of problem \cref{prob}, i.e., it holds $\grad f(x_*) = 0$.
\end{theorem}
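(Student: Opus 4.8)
The plan is to transplant the classical convergence analysis of the Zhang--Hager nonmonotone line search into the Riemannian setting; the only genuinely new ingredient is the retraction, whose defining properties $R_{x}(0_x)=x$ and $DR_{x}(0_x)[\xi]=\xi$ let the Euclidean arguments go through verbatim along the pulled-back curve $t\mapsto f(R_{x_k}(t\eta_k))$. First I would record the key structural fact about the reference values: since $C_0=f(x_0)$ and $C_{k+1}=(\varrho Q_kC_k+f(x_{k+1}))/Q_{k+1}$ with $Q_{k+1}=\varrho Q_k+1$, an easy induction shows $f(x_k)\le C_k$ for every $k$. Because $\eta_k=-\grad f(x_k)$ gives $\iprod{\grad f(x_k)}{\eta_k}_{x_k}=-\|\grad f(x_k)\|^2<0$ whenever $x_k$ is not stationary (otherwise the loop has already terminated), the chain rule together with $DR_{x_k}(0_{x_k})[\eta_k]=\eta_k$ shows that $t\mapsto f(R_{x_k}(t\eta_k))$ has derivative $\iprod{\grad f(x_k)}{\eta_k}_{x_k}<0$ at $t=0$; combined with $f(x_k)\le C_k$ this guarantees that the nonmonotone Armijo test \cref{eq:NMLS-Armijo} holds for all sufficiently small $t$, so the backtracking terminates and $t_k$ is well defined.

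Next I would show that the reference sequence $\{C_k\}$ is monotonically nonincreasing and extract summability. Substituting the accepted step into the update and using $f(x_{k+1})\le C_k+\rho t_k\iprod{\grad f(x_k)}{\eta_k}_{x_k}$ yields
\[ C_{k+1}\le C_k+\frac{\rho t_k\iprod{\grad f(x_k)}{\eta_k}_{x_k}}{Q_{k+1}}, \]
so $\{C_k\}$ decreases. An accumulation point $x_*$ with $x_{k_j}\to x_*$ gives, by continuity, $f(x_{k_j})\to f(x_*)$; combined with $f(x_k)\le C_k$ and the monotonicity this bounds $\{C_k\}$ below, so $C_k\to C_*>-\infty$. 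Since $Q_{k+1}=\sum_{j=0}^{k}\varrho^{j}\le(1-\varrho)^{-1}$ is bounded, telescoping the above bound gives $\sum_k t_k\|\grad f(x_k)\|^2<\infty$, and in particular $t_k\|\grad f(x_k)\|^2\to0$.

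The endgame is a dichotomy along the subsequence $x_{k_j}\to x_*$. If $\liminf_j t_{k_j}>0$, then $t_k\|\grad f(x_k)\|^2\to0$ immediately forces $\grad f(x_{k_j})\to0$, and continuity of $\grad f$ gives $\grad f(x_*)=0$. The only subtle case is $t_{k_j}\to0$: here the trial step just before acceptance, $t_{k_j}/\delta$, failed \cref{eq:NMLS-Armijo}, so using $f(x_{k_j})\le C_{k_j}$,
\[ f\!\left(R_{x_{k_j}}\!\left(\tfrac{t_{k_j}}{\delta}\eta_{k_j}\right)\right) > f(x_{k_j}) + \rho\,\tfrac{t_{k_j}}{\delta}\,\iprod{\grad f(x_{k_j})}{\eta_{k_j}}_{x_{k_j}}. \]
Dividing by $t_{k_j}/\delta>0$ and applying the mean value theorem to the smooth scalar curve $t\mapsto f(R_{x_{k_j}}(t\eta_{k_j}))$ at some $\theta_j\in(0,t_{k_j}/\delta)$, then letting $j\to\infty$ (so $\theta_j\to0$) while $\eta_{k_j}\to\eta_*=-\grad f(x_*)$, the smoothness of the retraction and the continuity of $\grad f$ yield $\iprod{\grad f(x_*)}{\eta_*}_{x_*}\ge\rho\,\iprod{\grad f(x_*)}{\eta_*}_{x_*}$. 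Since $\rho\in(0,1)$ this forces $\iprod{\grad f(x_*)}{\eta_*}_{x_*}\ge0$, i.e.\ $-\|\grad f(x_*)\|^2\ge0$, whence $\grad f(x_*)=0$.

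I expect the main obstacle to be the rigorous passage to the limit in this last case: on a manifold one must make sense of $\eta_{k_j}\to\eta_*$ across distinct tangent spaces and control the derivative of $t\mapsto f(R_{x_{k_j}}(t\eta_{k_j}))$ uniformly for small $t$. Working in a local chart around $x_*$, or invoking the joint smoothness of $(x,\xi)\mapsto R_x(\xi)$ on the tangent bundle together with the continuity of $\grad f$, one shows that these curves and their first derivatives converge to those at $x_*$, which legitimizes the mean value estimate and the limit. This is precisely the step where the first-order retraction property $DR_{x}(0_x)[\xi]=\xi$ is indispensable, since it identifies the initial slope of the pulled-back curve with the Riemannian directional derivative.
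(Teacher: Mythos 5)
Your proposal is correct and follows essentially the same route as the paper's proof: establishing $f(x_k)\le C_k$ and well-posedness of the line search via the retraction properties, showing $\{C_k\}$ is decreasing and bounded along the accumulating subsequence, telescoping with $Q_{k+1}<(1-\varrho)^{-1}$ to get $t_k\|\grad f(x_k)\|_{x_k}^2\to 0$, and handling the vanishing-step case through the failed Armijo trial at $\delta^{-1}t_k$ plus a mean value theorem limit. The only difference is presentational: where you spell out the induction for $f(x_k)\le C_k$ and the chart-level passage to the limit in the mean value argument, the paper outsources these two steps to \cite[Lemma 1.1]{ZhaHag04} and to the proof of \cite[Theorem 4.3.1]{opt-manifold-book}, and it phrases your dichotomy as a contradiction under the assumption $\grad f(x_*)\neq 0$.
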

\begin{proof} At first, by using $\iprod{\grad f(x_k)}{\eta_k}_{x_k} = - \|\grad
    f(x_k)\|_{x_k}^2 < 0$ and applying \cite[Lemma 1.1]{ZhaHag04}, we have $f(x_k) \leq C_k$ and $x_k \in \mathcal L$ for all $k \in \N$. Next, due to
	\begin{align*} \lim_{t \downarrow 0} \frac{(f \circ R_{x_k})(t \eta_k) - f(x_k)}{t} - \rho \iprod{\grad f(x_k)}{\eta_k}_{x_k} & \\ &\hspace{-45ex}= \nabla f(R_{x_k}(0))^\top DR_{x_k}(0) \eta_k + \rho \|\grad f(x_k)\|_{x_k}^2 = -(1-\rho) \|\grad f(x_k)\|_{x_k}^2 < 0, \end{align*}
	there always exists a positive step size $t_k \in (0,\gamma_k]$ satisfying the monotone and nonmonotone Armijo conditions \cref{eq:MLS-Armijo} and  \cref{eq:NMLS-Armijo}, respectively. Now, let $x_* \in \mathcal M$ be an arbitrary acccumulation point of $\{x_k\}$ and let $\{x_k\}_K$ be a corresponding subsequence that converges to $x_*$. By the definition of $C_{k+1}$ and \cref{eq:MLS-Armijo}, we have
	\[ C_{k+1} = \frac{\varrho Q_k C_k + f(x_{k+1})}{Q_{k+1}} < \frac{(\varrho Q_k+1) C_k}{Q_{k+1}} = C_k. \]
	Hence, $\{C_k\}$ is monotonically decreasing and converges to some limit $\bar C \in \R \cup \{-\infty\}$. Using $f(x_k) \to f(x_*)$ for $K \ni k \to \infty$, we can infer $\bar C \in \R$ and thus, we obtain
	\[ \infty > C_0 - \bar C = \sum_{k=0}^\infty C_k - C_{k+1} \geq \sum_{k=0}^\infty \frac{\rho t_k \|\grad f(x_k)\|_{x_k}^2}{Q_{k+1}}. \]
	Due to $Q_{k+1} = 1 + \varrho Q_k = 1 + \varrho  + \varrho^2 Q_{k-1} = ... = \sum_{i=0}^{k} \varrho^i < (1-\varrho)^{-1}$, this implies $\{t_k  \|\grad f(x_k)\|_{x_k}^2\} \to 0$. Let us now assume $\|\grad f(x_*)\| \neq 0$. In this case, we have $\{t_k\}_K \to 0$ and consequently, by the construction of Algorithm \ref{alg:RLSBB}, the step size $\delta^{-1} t_k$ does not satisfy \cref{eq:NMLS-Armijo}, i.e., it holds
	\be \label{eq:fst-armijo} - \rho (\delta^{-1}t_k) \|\grad f(x_k)\|_{x_k}^2 < f(R_{x_k}(\delta^{-1}t_k \eta_k)) - C_k \leq f(R_{x_k}(\delta^{-1}t_k \eta_k)) - f(x_k) \ee
	for all $k \in K$ sufficiently large. Since the sequence $\{\eta_k\}_K$ is bounded, the rest of the proof is now identical to the proof of \cite[Theorem 4.3.1]{opt-manifold-book}. In particular, applying the mean value theorem in \cref{eq:fst-armijo} and using the continuity of the Riemannian metric, we can easily derive a contradiction. We refer to \cite{opt-manifold-book} for more details.
\end{proof}

\subsection{Second-order type algorithms} \label{subsec:alg-2nd}
A gradient-type algorithm usually is fast in the early iterations, but it often
slows down or even stagnates when the generated iterations are close to an
optimal solution. When a high accuracy is required, second-order type algorithms
may have its advantage.  

By utilizing the exact Riemannian Hessian and different retraction operators,
Riemannian Newton methods, trust-region methods, adaptive regularized Newton
method have been proposed in
\cite{udriste1994convex,AbsilBakerGallivan2007,opt-manifold-book,hu2018adaptive}.
When the second-order information is not available, the quasi-Newton
type method becomes necessary. As in the Riemannian CG method, we need the vector transport operator to compare different tangent vectors from different tangent spaces. In addition, extra restrictions on the vector transport and the retraction are required for better convergence property or even convergence \cite{qi2011numerical,Ring2012Optimization,seibert2013properties,huang2013optimization,huang2015riemannian,huang2015broyden,huang2018riemannian}. Non-vector-transport based quasi-Newton method is also explored in \cite{hu2018structured}.

\subsubsection{Riemannian trust-region method}
One of the popular second-order algorithms is a Riemannian trust-region (RTR)
algorithm \cite{AbsilBakerGallivan2007,opt-manifold-book}. At the $k$-th
iteration $x_k$, by utilizing the Taylor expansion on manifold, RTR constructs
the following subproblem on the Tangent space:
\be \label{prob:tr-sub}
\min_{\xi \in T_{x_k} \Mcal } \quad m_k(\xi) := \iprod{\grad f(x_k)}{\xi}_{x_k} +\frac{1}{2} \iprod{\hess f(x_k) [\xi] }{\xi}_{x_k}, \quad \st \;\; \|\xi\|_{x_k} \leq \Delta_k,
\ee
where $\Delta_k$ is the trust-region radius. In \cite{NocedalWright06},
extensive methods for solving \eqref{prob:tr-sub} are summarized. Among them,
the Steihaug CG method, also named as truncated CG method, is most popular due
to its good properties and relatively cheap computational cost.  
By solving this trust-region subproblem, we obtain a direction $\xi_k \in
T_{x_k} \M$ satisfying the so-called Cauchy decrease. Then a trial point is computed as $z_k = R_{x_k}(\xi_k)$, where the step size is chosen as $1$. To determine the acceptance of $z_k$, we compute the ratio between the actual reduction and the predicted reduction
\be \label{eq:tr-ratio} \rho_k := \frac{f(x_k) - f(R_{x_k}(\xi_k))}{m_k(0) - m_k(\xi_k)}. \ee 
When $\rho_k$ is greater than some given parameter $0< \eta_1 < 1$, $z_k$ is
accepted.  Otherwise, $z_k$ is rejected. To avoid the algorithm stagnating at some feasible point and promote the efficiency as well, the trust-region radius is also updated based on $\rho_k$. The full algorithm is presented in Algorithm \ref{alg:RTR}. 
\begin{algorithm2e}[t]  \label{alg:RTR}
	\caption{Riemannian trust-region method}	
 	\textbf{Input:} Initial guess $x_0 \in \M$ and parameters $\bar{\Delta}>0, \, \Delta_0 \in (0, \bar{\Delta}), \rho' \in [0, \frac{1}{4})$. \\
	\textbf{Output:} Sequences of iterates $\{x_k\}$ and related information. \\
    \LinesNumbered 
	\For{$k = 0,1,2,\ldots$}{
    Use the truncated CG method to obtain $\xi_k$ by solving
    \eqref{prob:tr-sub}. \\
    Compute the ratio $\rho_k$ in \eqref{eq:tr-ratio}. \\
    \lIf{$\rho_k < \frac{1}{4}$}{ $\Delta_{k+1} = \frac{1}{4} \Delta_k$}  
    \lElseIf{$\rho_k > \frac{3}{4}$ and $\|\xi_k\| = \Delta_k$}{$\Delta_{k+1} = \min(2\Delta_k, \bar{\Delta})$}
    \lElse{$\Delta_{k+1} = \Delta_k$}	
    \lIf{$\rho_k > \rho'$}{$x_{k+1} = R_{x_k}(\xi_k)$}
    \lElse{$x_{k+1} = x_k$}   
    }	
\end{algorithm2e} 

For the global convergence, the following assumptions are necessary for second-order type algorithms on manifold.   
\begin{assumption} \label{assum:2nd-alg}
	(a). The function $f$ is continuous differentiable and bounded from below on the level set $\{x\in\M \,:\, f(x) \leq f(x_0) \}$. \\
    (b). There exists a constant $\beta_{Hess} > 0$ such that 
		\[ \| \Hess f(x_k) \| \leq \beta_{Hess}, \; \forall k = 0,1,2, \ldots \]
\end{assumption}
Algorithm \ref{alg:RTR} also requires a Lipschitz type continuous property on the objective function $f$ \cite[Definition 7.4.1]{opt-manifold-book}.    
\begin{assumption} \label{assum:global-rtr}
	There exists two constants $\beta_{RL} > 0$ and $\delta_{RL} > 0$ such that for all $x\in \M$ and $\xi \in T_x \M \mathrm{~with~} \|\xi\| = 1$, 
	\[ \left| \frac{d}{dt} f\circ R_x(t\xi) \mid_{t = \tau} - \frac{d}{dt} f \circ R_x(t\xi) \mid_{t = 0} \right| \leq \tau\beta_{RL} , \, \forall \tau \leq \delta_{RL}.  \]
%
\end{assumption}
Then the global convergence to a stationary point \cite[Theorem 7.4.2]{opt-manifold-book} is presented as follows.
\begin{theorem}
	Let $\{x_k\}$ be a sequence generated by Algorithm \ref{alg:RTR}. Suppose that Assumptions \ref{assum:2nd-alg} and \ref{assum:global-rtr} holds, then 
	\[ \liminf_{k \rightarrow \infty} \| \grad f(x_k) \| = 0. \]
\end{theorem}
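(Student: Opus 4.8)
The plan is to run the classical trust-region global convergence argument by contradiction, transplanted to the manifold via the retraction $R$ and Assumption \ref{assum:global-rtr}. First I would record the Cauchy decrease that the truncated CG solver inherits from its initialization: because the first truncated CG iterate minimizes the model along the steepest-descent direction within the trust region, the returned step $\xi_k$ satisfies
\[ m_k(0) - m_k(\xi_k) \geq \half \|\grad f(x_k)\|_{x_k} \min\left( \Delta_k, \frac{\|\grad f(x_k)\|_{x_k}}{\beta_{Hess}} \right), \]
where $m_k(0) = 0$ and $\beta_{Hess}$ is the uniform Hessian bound from Assumption \ref{assum:2nd-alg}(b). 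This lower bound on the predicted reduction is what drives the whole argument.

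Next I would estimate the gap between the true reduction $f(x_k) - f(R_{x_k}(\xi_k))$ and the model reduction. Parametrizing by $t \mapsto f \circ R_{x_k}(t\, \xi_k/\|\xi_k\|_{x_k})$ and applying the mean value theorem together with the radial Lipschitz estimate of Assumption \ref{assum:global-rtr}, one obtains a second-order remainder bound of the form
\[ \left| f(R_{x_k}(\xi_k)) - f(x_k) - m_k(\xi_k) \right| \leq c\, \|\xi_k\|_{x_k}^2 \]
for a constant $c > 0$ depending only on $\beta_{Hess}$ and $\beta_{RL}$, valid once $\|\xi_k\|_{x_k} \leq \delta_{RL}$. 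Dividing by the Cauchy decrease then shows $|\rho_k - 1| \to 0$ whenever $\Delta_k \to 0$ while $\|\grad f(x_k)\|_{x_k}$ stays bounded away from zero, so such a step is eventually very successful.

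For the contradiction, suppose $\liminf_{k} \|\grad f(x_k)\| > 0$, so there is $\epsilon > 0$ with $\|\grad f(x_k)\|_{x_k} \geq \epsilon$ for all large $k$. The remainder estimate shows that whenever $\Delta_k$ falls below some threshold $\bar\delta(\epsilon) > 0$ one has $\rho_k \geq \tfrac14 > \rho'$, so the step is accepted and the radius is not reduced; since the radius is only shrunk on rejected iterations, and those require $\Delta_k \geq \bar\delta(\epsilon)$, we conclude $\Delta_k \geq \min(\Delta_{k_0}, \tfrac14 \bar\delta(\epsilon)) =: \gamma > 0$ for all large $k$. Each accepted step then lowers the objective by at least $\rho' \cdot \half \epsilon \min(\gamma, \epsilon/\beta_{Hess}) > 0$; as accepted steps must occur infinitely often---otherwise all late iterations are rejected and $\Delta_k \to 0$, contradicting $\Delta_k \geq \gamma$---this forces $f(x_k) \to -\infty$, contradicting the lower boundedness in Assumption \ref{assum:2nd-alg}(a). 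Hence $\liminf_k \|\grad f(x_k)\| = 0$.

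I expect the main obstacle to be the second step: unlike the Euclidean case, the model $m_k$ is built from the Riemannian Hessian whereas the actual decrease is measured through the retraction, so the two cannot be reconciled merely by differentiating $f \circ R$ twice. Assumption \ref{assum:global-rtr} is precisely engineered to supply the missing radial Lipschitz control, and I would import the detailed remainder estimate together with the radius bookkeeping from \cite[Theorem 7.4.2]{opt-manifold-book}.
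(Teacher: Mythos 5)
Your route---Cauchy decrease inherited by truncated CG, a quadratic remainder bound for the pullback $f \circ R_{x_k}$ obtained from Assumption \ref{assum:global-rtr} (valid once $\|\xi_k\|_{x_k} \leq \delta_{RL}$), the consequent estimate $|\rho_k - 1| = O(\Delta_k)$ while the gradient stays bounded away from zero, a trust-region-radius lower bound, and a summability contradiction with the lower boundedness in Assumption \ref{assum:2nd-alg}---is exactly the standard argument behind this theorem: the paper itself gives no in-text proof and simply cites \cite[Theorem 7.4.2]{opt-manifold-book}, whose proof follows the classical Euclidean scheme just as you outline. One slip in your radius bookkeeping is harmless but worth correcting: the claim that ``the radius is only shrunk on rejected iterations'' misreads Algorithm \ref{alg:RTR}. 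Acceptance requires $\rho_k > \rho'$ while shrinkage occurs whenever $\rho_k < \frac{1}{4}$, and since $\rho' < \frac{1}{4}$ an iteration with $\rho' < \rho_k < \frac{1}{4}$ is accepted yet still shrinks the radius. This does not damage your conclusion, because the property you actually use---a shrink can only occur when $\Delta_k > \bar\delta(\epsilon)$, whence $\Delta_k \geq \gamma := \min\left(\Delta_{k_0}, \tfrac{1}{4}\bar\delta(\epsilon)\right)$---follows from your threshold estimate alone, provided you also take $\bar\delta(\epsilon) \leq \delta_{RL}$ so the remainder bound applies.

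The genuine gap is in your terminal contradiction: the assertion that each accepted step lowers the objective by at least $\rho' \cdot \frac{1}{2}\epsilon \min\left(\gamma, \epsilon/\beta_{Hess}\right) > 0$ collapses at the boundary value $\rho' = 0$, which the algorithm explicitly permits ($\rho' \in [0, \frac{1}{4})$ in the input of Algorithm \ref{alg:RTR}). With $\rho' = 0$, acceptance only guarantees $\rho_k > 0$, so the per-step decrease has no uniform positive lower bound and no contradiction with Assumption \ref{assum:2nd-alg}(a) results; your argument as written proves the theorem only for $\rho' > 0$. The standard repair, used in the cited proof, is to work with the ``very successful'' subsequence instead of all accepted steps: having established $\Delta_k \geq \gamma > 0$, there must be infinitely many iterations with $\rho_k \geq \frac{1}{4}$, since otherwise every late iteration shrinks the radius by the factor $\frac{1}{4}$ and $\Delta_k \to 0$, contradicting $\Delta_k \geq \gamma$. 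Each such iteration is accepted because $\frac{1}{4} > \rho'$ and satisfies
\[ f(x_k) - f(x_{k+1}) \;\geq\; \frac{1}{4}\left(m_k(0) - m_k(\xi_k)\right) \;\geq\; \frac{1}{8}\,\epsilon \min\left(\gamma, \frac{\epsilon}{\beta_{Hess}}\right) > 0, \]
and summing over this infinite subsequence contradicts the boundedness of $f$ from below. With this substitution your proof is complete and coincides with the argument the paper invokes.
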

By further assuming the Lipschitz continuous property of the Riemannian gradient
\cite[Definition 7.4.3]{opt-manifold-book} and some isometric property of the
Retraction operator $R$ \cite[Equation (7.25)]{opt-manifold-book}, the
convergence of the whole sequence is proved \cite[Theorem
7.4.4]{opt-manifold-book}. The locally superlinear convergence rate of RTR and
its related assumptions can be found in \cite[Section 7.4.2]{opt-manifold-book}. 

\subsubsection{Adaptive regularized Newton method}
From the perspective of Euclidean approximation, an adaptive regularized Newton
algorithm (ARNT) is proposed for specific and general manifold optimization
problems \cite{wen2013adaptive,wu2015regularized,hu2018adaptive}. In the
subproblem, the objective function is constructed by the second-order Taylor expansion in the Euclidean space and an extra regularization term, while the manifold constraint is kept. Specifically, the mathematical formulation is 
\be \label{prob:arnt} \min_{x \in \Mcal} \quad \hat{m}_k(x): = \iprod{\nabla f(x)}{x - x_k} + \frac{ 1}{2} \iprod{H_k[x - x_k]}{x - x_k} + \frac{\sigma_k}{2} {\|x - x_k\|^2}, \ee
where $H_k$ is the Euclidean Hessian or its approximation. From the definition of Riemannian gradient and Hessian, we have
\be \label{eq:hess-sub}
\begin{aligned}
	\grad \hat{m}_k(x_k) & = \grad f(x_k) \\
	\hess \hat{m}_k(x_k)[U] & =  \bPk(H_k[U]) + \mathfrak{W}_{x_k}(U,\bPk^{\bot}(\nabla f(x_k))) + \sigma_k U,
\end{aligned}  \ee
where $U \in T_{x_k}\Mcal$, $\bPk^{\bot} := I - \bPk$ is the projection onto the normal space and the Weingarten map ${\mathfrak{W}}_x(\cdot,v)$ with $v \in T_{x_k}^{\bot} \M$ is a symmetric linear operator which is related to the second fundamental form of $\M$. To solve \cref{prob:arnt}, a modified CG method is proposed in \cite{hu2018adaptive} to solve the Riemannian Newton equation at $x_k$,
\[ \grad \hat{m}_(x_k) + \hess \hat{m}_k(x_k)[\xi_k] = 0. \]
Since $\hess \hat{m}_k(x_k)$ may not be positive definite, CG may be terminated
if a direction with negative curvature, says $d_k$, is encountered. Different
from the truncated CG method used in RTR, a linear combination of $s_k$ (the output of the truncated CG method) and the negative curvature direction $d_k$ is used to construct a descent direction 
\be \label{eq:subline} \xi_k = \begin{cases} s_k + \tau_k d_k & \text{if } d_k \neq 0, \\ s_k & \text{if } d_k = 0, \end{cases} \quad \text{with} \quad \tau_k := \frac{\iprod{d_k}{\grad m_k(x_k)}_{x_k}}{\iprod{d_k}{\Hess m_k(x_k)[d_k]}_{x_k}}. \ee
A detailed description on the modified CG method is presented in Algorithm \ref{alg:tN}. 
\begin{algorithm2e}[t] \label{alg:tN}
	\caption{A modified CG method for solving subproblem \eqref{prob:arnt} }
    Set $T > 0$, $\theta > 1$, $\epsilon \geq 0$, $\eta_0 = 0$,  $r_0 = \grad
	m_k(x_k)$,  $p_0 = -r_0 $, and  $i = 0$.
	\\
	\While{ $i \leq  n-1$ }
	{	 	
	 Compute $\pi_i = \iprod{p_i}{\Hess \hat{m}_k(x_k)[p_i]}_{x_k}$.\\
		\If {$\pi_i \, /  \iprod{p_i}{p_i}_{x_k} \leq \epsilon$}			
		{
			\lIf {$i = 0$}{set $s_k = -p_0, \, d_k = 0$}
			\lElse{
				set $s_k = \eta_i, $ 
				
				\lIf{$\pi_i \, /  \iprod{p_i}{p_i}_{x_k} \leq -\epsilon$}{$d_k = p_i$, set $\sigma_{est} = |\pi_i| \, /  \iprod{p_i}{p_i}_{x_k} $}
				\lElse{ $d_k = 0$}
				break}}
		
		Set $\alpha_i = \iprod{r_i}{r_i}_{x_k}  / \, \pi_i, \, \eta_{i+1} = \eta_i
		+ \alpha_i p_i$, and $r_{i+1} = r_i + \alpha_i\Hess \hat{m}_k(x_k)[p_i]$.\\
		\If {$\|r_{i+1}\|_{x_k} \leq \min\{\|r_0\|_{x_k}^\theta, T\}$}
		{choose $ s_k = \eta_{i+1}, d_k =0$;
			break;}
		Set $\beta_{i+1} = \iprod{r_{i+1}}{r_{i+1}}_{x_k} /  \iprod{r_i}{r_i}_{x_k}$ and $p_{i+1} = - r_{i+1} + \beta_{i+1} p_i$.\\
		$i\gets i+1$.
	}
	Update $\xi_k$ according to \cref{eq:subline}. 
\end{algorithm2e} 
Then, Armijo search along $\xi_k$ is adopted to obtain a trial point $z_k$. After obtaining $z_k$, we compute the following ratio between the actual reduction and the predicted reduction,
 \be \label{alg:TR-ratio} \rho_k = \frac{ f(z_k) - f(x_k)  } {
	m_k(z_k) }.  \ee
If $\rho_k \ge \eta_1 > 0$, then the iteration is successful and we set
$x_{k+1}= z_k$; otherwise, the iteration is not successful and we set $x_{k+1}=
x_k$, i.e., 
\be \label{eq:update-x}
x_{k+1} = \begin{cases} z_k, & \mbox{ if } \rho_k \ge \eta_1, \\
	x_k, & \mbox{ otherwise}.
\end{cases}
\ee
The regularization parameter $\sigma_{k+1}$ is updated as follows
\be \label{alg:tau-up} \sigma_{k+1} \in \begin{cases} (0,\gamma_0\sigma_k] & \mbox{  if }
	\rho_k \ge \eta_2, \\ [\gamma_0\sigma_k,\gamma_1 \sigma_k] & \mbox{  if } \eta_1 \leq \rho_k
	< \eta_2, \\ [\gamma_1 \sigma_k, \gamma_2 \sigma_k] & \mbox{  otherwise}, \end{cases} \ee
where $0<\eta _1 \le \eta _2 <1 $ and $0 < \gamma_0 < 1<  \gamma _1 \le \gamma _2 $. These parameters determine how aggressively the
regularization parameter is adjusted when an iteration is successful or
unsuccessful. Putting these features together, we obtain Algorithm
\ref{alg:TR-ineact-newton}, which is dubbed as ARNT.
\begin{algorithm2e}[t] 
	\caption{An Adaptive Regularized Newton Method}
	\label{alg:TR-ineact-newton}
	Choose a feasible initial point
	$x_0 \in \M$ and an initial regularization parameter $\sigma_0>0$.
	Choose $0<\eta _1 \le \eta _2 <1 $, $0 < \gamma_0 < 1<  \gamma _1 \le \gamma _2 $.  Set  $k:=0$.
	
	\While{stopping conditions not met}
	{
		
		Compute a new trial point $z_k$ by doing Armijo search along $\xi_k$ obatined by Algorithm \ref{alg:tN}. \\
		Compute the ratio $\rho_k$ via \cref{alg:TR-ratio}. \\
		Update $x_{k+1}$ from the trial point $z_k$ based on \cref{eq:update-x}. \\
		Update $\sigma_{k}$ according to \cref{alg:tau-up}. \\ 
		$k\gets k+1$.
	}	
\end{algorithm2e}

We next present the convergence property of Algorithm \ref{alg:TR-ineact-newton} with the exact Euclidean Hessian (i.e., $H_k = \nabla^2 f(x_k)$)
starting from a few assumptions. 
 \begin{assumption} \label{assum:f} Let $\{x_k\}$ be generated by Algorithm
     \ref{alg:TR-ineact-newton} the exact Euclidean Hessian. 
	\begin{itemize}
		\item[{\rm(A.1)}] The gradient $\nabla f $ is Lipschitz continuous on the convex hull of the manifold $\M$ -- denoted by $\conv(\M)$, i.e., there exists $L_f > 0$ such that
	\end{itemize}
	\[ \|\nabla f(x) - \nabla f(y)\| \leq L_f\| x -y\|, \quad \forall~x,y \in \conv(\M).\]
	\begin{itemize}
		\item[\rm{(A.2)}] There exists $\kg > 0$ such that $\|\nabla f(x_k)\|\leq  \kg$ for all $k \in \N.$
		\item[{\rm(A.3)}] There exists $\kH > 0$ such that $\|\nabla^2 f(x_k)\| \leq \kH$ for all $k \in \N$.
	\end{itemize}
    \begin{itemize}
	\item[{\rm(A.4)}] Suppose there exists $\ubar{\varpi} > 0$, $\obar{\varpi} \geq 1$ such that $\ubar{\varpi}$ and $\obar{\varpi}$ 
		\[ \ubar{\varpi} \| \xi \|_2 \leq g_{x_k}(\xi, \xi) \leq \obar{\varpi} \|\xi \|^2, \; \xi \in T_{x_k} \M,  \]
		 for all $k \in \N$.
	\end{itemize}
\end{assumption}
We note that the assumptions (A.2) and (A.4) hold if $f$ is continuous differentiable and the level set $\{ x\in \Mcal \, :\, f(x) \leq f(x_0)  \}$ is compact. 

The global convergence to an stationary point can be obtained. 
\begin{theorem} \label{thm:convergence} Suppose that Assumptions \ref{assum:2nd-alg} and \ref{assum:f} hold. Then, either
	\[ \grad f(x_\ell) = 0 \ \ \text{for some} \ \ \ell \geq 0  
	\quad \text{or} \quad \liminf_{k \to \infty} \|\grad f(x_k)\|_{x_k} = 0. \] 
\end{theorem}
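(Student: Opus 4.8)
The plan is to mirror the classical Riemannian trust-region convergence argument of \cite[Theorem 7.4.2]{opt-manifold-book}, while accounting for the two features that distinguish ARNT: the search direction $\xi_k$ returned by the modified CG scheme of Algorithm \ref{alg:tN}, and the adaptively updated regularization parameter $\sigma_k$. First I would establish a Cauchy-type lower bound on the predicted reduction. Because the modified CG iteration is initialized with $p_0 = -r_0 = -\grad m_k(x_k)$ and $\grad \hat{m}_k(x_k) = \grad f(x_k)$ by \eqref{eq:hess-sub}, the output $s_k$ (and hence $\xi_k$ after the negative-curvature correction \eqref{eq:subline}) inherits at least the decrease of a single steepest-descent step on the regularized model $\hat{m}_k$. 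Using Assumptions \ref{assum:f}(A.3)--(A.4) to bound $\|\hess \hat m_k(x_k)\|$ together with the boundedness of the Weingarten term in \eqref{eq:hess-sub}, and the explicit choice of $\tau_k$ in \eqref{eq:subline} when a negative-curvature direction $d_k \neq 0$ appears, I would derive a uniform bound
\[ -m_k(z_k) \;\geq\; \kappa\,\|\grad f(x_k)\|_{x_k}\,\min\!\left( \|\grad f(x_k)\|_{x_k},\ \frac{\|\grad f(x_k)\|_{x_k}}{1 + \beta_{Hess} + \sigma_k} \right), \]
for some $\kappa > 0$ independent of $k$, along with the descent property $\iprod{\grad f(x_k)}{\xi_k}_{x_k} < 0$ that makes the Armijo search in Algorithm \ref{alg:TR-ineact-newton} well defined.

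Second I would show that $\sigma_k$ stays bounded above, say by some $\sigma_{\max}$. Combining the Armijo acceptance along $\xi_k$ with a second-order Taylor expansion of the pullback $f \circ R_{x_k}$ and the Lipschitz bound (A.1), I would control the discrepancy between the Euclidean model $\hat m_k$ and the true objective, and conclude that $|\rho_k - 1|$ in \eqref{alg:TR-ratio} becomes small once $\sigma_k$ is large relative to the curvature, so that any sufficiently regularized step is very successful and triggers the decreasing branch of \eqref{alg:tau-up}. This is the step I expect to be the main obstacle: one must bound the model-versus-truth gap \emph{uniformly} over the iterates, which requires the retraction $R$ to be suitably smooth and the quantities $\|\nabla^2 f(x_k)\|$, $\bPk^{\bot}(\nabla f(x_k))$, and the metric ratio $\obar\varpi/\ubar\varpi$ to remain controlled—properties that follow from Assumption \ref{assum:2nd-alg} and the compactness consequences of (A.2) and (A.4) noted after Assumption \ref{assum:f}.

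Finally I would close the argument by contradiction. Suppose $\grad f(x_k) \neq 0$ for every $k$ and $\liminf_{k\to\infty}\|\grad f(x_k)\|_{x_k} = \varepsilon_0 > 0$. Since $f$ is bounded from below by Assumption \ref{assum:2nd-alg}(a), summing the guaranteed decreases $f(x_k) - f(x_{k+1}) \geq \eta_1\bigl(-m_k(z_k)\bigr) > 0$ over the set $\mathcal{S}$ of successful iterations forces $-m_k(z_k) \to 0$ along $\mathcal{S}$; inserting the lower bound from the first step and using $\sigma_k \leq \sigma_{\max}$ then contradicts $\|\grad f(x_k)\|_{x_k} \geq \varepsilon_0$, provided $\mathcal{S}$ is infinite. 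It remains to exclude the case that only finitely many iterations succeed: after the last successful step the iterate is frozen while \eqref{alg:tau-up} keeps increasing $\sigma_k$ without bound, contradicting the bound $\sigma_{\max}$ from the second step. Hence either $\grad f(x_\ell) = 0$ for some finite $\ell$, or $\liminf_{k\to\infty}\|\grad f(x_k)\|_{x_k} = 0$, as claimed.
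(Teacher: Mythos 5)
Your outline is correct and follows essentially the same route as the paper, which states this theorem without proof and defers the detailed analysis to \cite{hu2018adaptive}: a Cauchy-type model-decrease bound for the modified CG direction, boundedness of the regularization parameter $\sigma_k$ via the estimate $|\rho_k - 1| \to 0$ for large $\sigma_k$, a summability contradiction over successful iterations, and a separate treatment of the finitely-many-successes case via unbounded growth of $\sigma_k$. One small repair to the ordering: the bound $\sigma_k \leq \sigma_{\max}$ cannot be established unconditionally, only under the contradiction hypothesis $\liminf_{k\to\infty}\|\grad f(x_k)\|_{x_k} \geq \varepsilon_0 > 0$ (or, in the frozen-iterate case, $\grad f(\bar{x}) \neq 0$), since the comparison $|\rho_k - 1| = O(\sigma_k^{-1})$ relies on the model decrease being of order $\|\grad f(x_k)\|_{x_k}^2/\sigma_k$ and degrades as the gradient vanishes—this is exactly how the cited analysis structures the argument, and your final contradiction already supplies the needed hypothesis.
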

For the local convergence rate, we make the following assumptions. 

\begin{assumption} \label{assum:local} Let $\{x_k\}$ be generated by Algorithm \ref{alg:TR-ineact-newton}.
	\begin{itemize}
		\item[{\rm(B.1)}] There exists $\beta_R, \delta_R > 0$ such that
		\bee  \left\| \ddt R_x(t \xi) \right \|_{x} \leq \beta_R \eee
		for all $x \in \M$, all $\xi \in T_x \M$ with $\|\xi\|_{x} = 1$ and all $t < \delta_R$.
		\item[{\rm(B.2)}] The sequence $\{x_k\}$ converges to $x_*$. \vspace{0.5ex}
		\item[{\rm(B.3)}] The Euclidean Hessian $\nabla^2 f$ is continuous on $\conv(\M)$. \vspace{0.5ex}
		\item[{\rm(B.4)}] The Riemannian Hessian $\Hess f$ is positive definite at $x_*$ and the constant $\epsilon$ in Algorithm \ref{alg:tN} is set to zero. \vspace{0.5ex}
		\item[{\rm(B.5)}] $H_k$ is a good approximation of the Euclidean Hessian $\nabla^2 f$, i.e., it holds
	\end{itemize}
	%
	\[ \label{eq:hessian} \| H_k - \nabla^2 f(x_k) \| \to 0, \quad \text{whenever} \quad \| \grad f(x_k)\|_{x_k} \to 0. \]
\end{assumption}
Then we have the following results on the local convergence rate.
\begin{theorem} \label{thm:mainthm}
	Suppose that the conditions (B.1)--(B.5) in Assumption \ref{assum:local} are satisfied. Then, the sequence $\{x_k\}$ converges q-superlinearly to $x_*$. 
\end{theorem}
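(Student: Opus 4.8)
The plan is to show that, close to $x_*$, ARNT eventually behaves like an inexact Riemannian Newton method with unit step size, and then to deduce the superlinear rate from a Taylor comparison along the retraction. I would proceed in four stages. \emph{First}, I establish that the subproblem's Riemannian Hessian is eventually uniformly positive definite. From \eqref{eq:hess-sub} one has $\hess \hat{m}_k(x_k) = \hess f(x_k) + \bPk\big((H_k - \nabla^2 f(x_k))[\cdot]\big) + \sigma_k\,\mathrm{Id}$. Since $x_k \to x_*$ and $\grad f(x_k) \to 0$ by (B.2), condition (B.5) forces $\|H_k - \nabla^2 f(x_k)\| \to 0$, while (B.3) gives $\hess f(x_k) \to \hess f(x_*)$. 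As $\hess f(x_*) \succ 0$ by (B.4), for all large $k$ the operator $\hess \hat{m}_k(x_k)$ is positive definite with uniformly bounded inverse, provided $\sigma_k$ stays bounded -- which I confirm next.

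\emph{Second}, I control the regularization parameter and show $\sigma_k \to 0$. A second-order Taylor expansion of $f \circ R_{x_k}$ along $\xi_k$, using the Lipschitz property of $\nabla f$ (A.1), the continuity of $\nabla^2 f$ (B.3) and the bounded retraction acceleration (B.1), shows that the actual decrease $f(z_k) - f(x_k)$ matches the model value $\hat{m}_k(z_k)$ up to $o(\|\xi_k\|^2)$. Hence the ratio $\rho_k$ in \eqref{alg:TR-ratio} tends to $1$, so that $\rho_k \geq \eta_2$ for $k$ large and the update rule \eqref{alg:tau-up} enforces $\sigma_{k+1} \leq \gamma_0 \sigma_k$ with $\gamma_0 < 1$. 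Therefore $\sigma_k$ is bounded (closing the gap in stage one) and in fact $\sigma_k \to 0$; moreover every iteration is eventually successful, so $x_{k+1} = z_k$.

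\emph{Third}, I reduce the CG solve and the line search to an exact Newton step of unit length. With $\hess \hat{m}_k(x_k) \succ 0$ and $\epsilon = 0$ by (B.4), Algorithm \ref{alg:tN} never meets a negative-curvature direction, so $d_k = 0$ and $\xi_k = s_k$ solves the regularized Newton equation up to the residual tolerance $\|r\| \leq \|r_0\|^\theta$ with $\theta > 1$, i.e.\ an inexact Newton condition with residual $O(\|\grad f(x_k)\|^\theta)$. A standard estimate then shows the unit step satisfies the Armijo condition \eqref{eq:MLS-Armijo} for $k$ large, so $z_k = R_{x_k}(\xi_k)$. \emph{Finally}, writing $\zeta_k := R_{x_k}^{-1}(x_*)$, I compare the Newton direction $\xi_k$ with $\zeta_k$. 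Combining the uniformly bounded inverse Hessian, the vanishing errors $\sigma_k \to 0$ and $\|H_k - \nabla^2 f(x_k)\| \to 0$, the inexact-Newton residual $O(\|\grad f(x_k)\|^\theta)$, and the retraction regularity (B.1), a Taylor expansion of $\grad f$ along the retraction yields $\|\xi_k - \zeta_k\| = o(\|\zeta_k\|)$. Since $x_{k+1} = R_{x_k}(\xi_k)$, this gives $\dist(x_{k+1}, x_*) = o(\dist(x_k, x_*))$, the claimed q-superlinear convergence.

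The hard part will be stage two, rigorously proving $\sigma_k \to 0$. The difficulty is that $\rho_k$ compares the true decrease of $f$ against the \emph{regularized} quadratic model, and one must quantify the mismatch arising both from the manifold's curvature -- entering through the Weingarten term and the nonzero normal component $\bPk^{\bot}(\nabla f(x_k))$ in \eqref{eq:hess-sub} -- and from the deviation of the retraction from a geodesic. Showing these second-order error terms are of strictly smaller order than $\|\xi_k\|^2$, so that $\rho_k \to 1$ triggers the very-successful branch of \eqref{alg:tau-up}, is where (A.1), (B.1) and (B.3) must be combined with care.
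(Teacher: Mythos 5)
Your proposal is essentially correct and follows the same route as the analysis this survey actually relies on: the paper itself gives no proof of Theorem \ref{thm:mainthm}, deferring entirely to \cite{hu2018adaptive}, and the argument there proceeds exactly through your four stages (eventual uniform positive definiteness of $\hess \hat{m}_k(x_k)$ via (B.3)--(B.5), the ratio $\rho_k$ in \eqref{alg:TR-ratio} tending to $1$ so that \eqref{alg:tau-up} drives $\sigma_k \to 0$ and all steps become successful with unit step size, and then a Dennis--Mor\'e-type comparison of the inexact regularized Newton direction with $R_{x_k}^{-1}(x_*)$ along the retraction). Two small repairs: first, $\grad f(x_k) \to 0$ does not follow from (B.2) alone --- you need to combine $x_k \to x_*$ with the global result (Theorem \ref{thm:convergence}) and continuity of $\grad f$ to conclude $\grad f(x_*) = 0$; second, the circularity you fear in stage two is benign, since the term $\frac{\sigma_k}{2}\|d\|^2$ makes the model in \eqref{prob:arnt} more conservative than $f$, i.e.\ it pushes $\rho_k$ \emph{above} $1$ rather than below, so the very-successful branch is triggered as soon as the remaining mismatch $\langle (H_k - \nabla^2 f(\tilde{x}_k))d_k, d_k\rangle = o(\|d_k\|^2)$ is beaten by a model-decrease lower bound of order $\|d_k\|^2$, which your stage-one positive definiteness supplies.
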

The detailed convergence analysis can be found in \cite{hu2018adaptive}.

\subsubsection{Quasi-Newton type methods}
When the Riemannian Hessian $\hess f(x)$ is computationally expensive or even
not available, quasi-Newton-type methods turn out to be an attractive approach. In literatures \cite{qi2011numerical,Ring2012Optimization,seibert2013properties,huang2013optimization,huang2015riemannian,huang2015broyden,huang2018riemannian}, extensive variants of quasi-Newton methods are proposed. Here, we take the Riemannian Broyden-Fletcher-Goldfarb-Shanno (BFGS) as an example to show the general idea of quasi-Newton methods on Riemannian manifold. Similar to the quasi-Newton method in the Euclidean space, an approximation $\Bcal_{k+1}$ should satisfy the following secant equation 
\[ \Bcal_{k+1} s_{k} = y_{k}, \]
where $s_k = \Tcal_{S_{\alpha_k\xi_k}} {\alpha_k\xi_k}$ and $y_k = \beta_k^{-1} \grad f(x_{k+1}) - \Tcal_{S_{\alpha_k\xi_k}} \grad f(x_k)$ with parameter $\beta_k$. Here, $\alpha_k$ and $\xi_k$ is the step size and the direction used in the $k$-th iteration. $\Tcal_{S_{\alpha_k\xi_k}}$ is an isometric vector transport operator associated with the retraction $R$, i.e., 
\[ \iprod{\Tcal_{S_{\xi_x}} u_x }{\Tcal_{S_{\xi_x}} v_x }_{R_x(\xi_x)} = \iprod{u_x}{v_x}_x. \]
Additionally, $\Tcal_S$ should satisfy the following locking condition,
\[ \Tcal_{S_{\xi_k}} \xi_k = \beta_k \Tcal_{R_{\xi_k}} \xi_k, \; \beta_k =\frac{\|\xi_k\|_{x_k}}{\|\Tcal_{R_{\xi_k}} \xi_k\|_{R_{x_k}(\xi_k)}},  \]
where $\Tcal_{R_{\xi_k}} \xi_k= \frac{d}{dt} R_{x_k}(t\xi_k) \mid_{t=1}$.
Then, the scheme of the Riemannian BFGS is 
\be \label{eq:rbfgs} \Bcal_{k+1} = \hat{\Bcal}_k - \frac{\hat{\Bcal}_ks_k (\hat{\Bcal}_ks_k)^{\flat}}{(\hat{\Bcal}_ks_k)^{\flat} s_k} + \frac{y_k y_k^\flat}{y_k^\flat s_k}, \ee
where $\hat{\Bcal}_k = \Tcal_{S_{\alpha_k\xi_k}} {\alpha_k\xi_k} \circ \Bcal_k \circ \left( \Tcal_{S_{\alpha_k\xi_k}} {\alpha_k\xi_k} \right)^{-1}$ is from $T_{x_{k+1}} \Mcal$ to $T_{x_{k+1}} \Mcal$. With this choice of $\beta_k$ and the isometric property of $\Tcal_S$, we can guarantee the positive definiteness of $\Bcal_{k+1}$. 
After obtaining the new approximation $\Bcal_{k+1}$, the Riemannian BFGS method solves the following linear system 
\[ \Bcal_{k+1} \xi_{k+1} = -\grad f(x_{k+1}) \]
to get $\xi_k$. The detailed algorithm is presented in Algorithm \ref{alg:RBFGS}.
\begin{algorithm2e}[t]  \label{alg:RBFGS}
	\caption{Riemannian BFGS method}	
	\textbf{Input:} Initial guess $x_0 \in \M$, isometric vector transport $\Tcal_S$ associated with the retraction $R$, initial Riemannian Hessian approximation $\Bcal_0:T_{x_0} \M \rightarrow T_{x_0} \M$, which is symmetric positive definite, Wolfe condition parameters $0< c_1 < \half < c_2 < 1$. \\
	\LinesNumbered 
	\For{$k = 0,1,2,\ldots$}{
        Solve $\Bcal_{k} \xi_{k} = -\grad f(x_{k})$ to get $\xi_k$.\\
		Obtain $x_{k+1}$ by doing a Wolfe search along $\xi_k$, i.e., finding
        $\alpha_k >0$ such that the following two conidtions are satisfied
		\[ \begin{aligned}
		f(R_{x_k}(\alpha_k \xi_k )) & \leq f(x_k) + c_1 \alpha_k \iprod{\grad
        f(x_k)}{\xi_k}_{x_k}, \\
		\frac{d}{dt} f(R_{x_k}(t\xi_k)) \mid_{t = \alpha_k}  & \geq c_2
        \frac{d}{dt} f(R_{x_k}(t\xi_k)) \mid_{t=0}.
		\end{aligned}  \] 
		
        Set $x_k = R_{x_k}(\alpha_k \xi_k)$.\\
		Update $\Bcal_{k+1}$ by \eqref{eq:rbfgs}.
	}	
\end{algorithm2e} 
The choice of $\beta_k = 1$ can also guarantee the convergence but with more strict assumptions. One can refer to \cite{huang2015broyden} for the convergence analysis. Since the computation of differentiated retraction may be costly, 
authors in \cite{huang2018riemannian} investigate another way to preserve the positive definiteness of the BFGS scheme. Meanwhile, the Wolfe search is replaced by the Armijo search. As a result, the differentiated retraction can be avoided and the convergence analysis is presented as well.

The aforementioned quasi-Newton methods rely on the vector transport operator. When the vector transport operation is computationally costly, these methods may be less competitive. Noticing the structure of the Riemannian Hessian $\Hess f(x_k)$, i.e.,
\[ \hess f(x_k)[U]  =  \bPk(\nabla^2 f(x_k) [U]) + \mathfrak{W}_{x_k}(U,\bPk^{\bot}(\nabla f(x_k))),\; U \in T_{x_k}\M,  \] 
where the second term $\mathfrak{W}_{x_k}(U,\bPk^{\bot}(\nabla f(x_k)))$ is
often much cheaper than the first term $\bPk(\nabla^2 f(x_k) [U])$. Similar to
the quasi-Newton methods in unconstrained nonlinear least square problems
\cite{kass1990nonlinear} \cite[Chapter 7]{sun2006optimization}, we can focus on the construction of an approximation of the Euclidean Hessian $\nabla^2 f(x_k)$ and use exact formulations of remaining parts. Furthermore, if the Euclidean Hessian itself consists of cheap and expensive parts, i.e., 
\be \label{eq:struct-obj} \nabla^2 f(x_k) = \Hc(x_k) + \He(x_k), \ee
where the computational cost of $\He(x_k)$ is much more expensive than $\Hc(x_k)$, an approximation of $\nabla^2 f(x_k)$ is constructed as
\be \label{eq:struct-app-general} H_k = \Hc(x_k) + C_k, \ee
where $C_k$ is an approximation of $\He(x_k)$ obtained by a quasi-Newton method
in the ambient Euclidean space. If an objective function $f$ is not equipped
with the structure \eqref{eq:struct-obj}, $H_k$ is a quasi-Newton approximation
of $\nabla^2 f(x_k)$. In the construction of the quasi-Newton approximation, a
Nystr\"om approximation technique \cite[Section 2.3]{hu2018structured} is
explored, which turns to be a better choice than the BB type initialization \cite[Chapter 6]{NocedalWright06}. Since the quasi-Newton approximation is constructed in the ambient Euclidean space, the vector transport is not necessary. Then, subproblem \eqref{prob:arnt} is constructed with $H_k$.     
From the expression of the Riemannian Hessian $\hess \hat{m}_k$ in
\eqref{eq:hess-sub}, we see that subproblem \eqref{prob:arnt} gives us a way to
approximate the Riemannian Hessian when an approximation $H_k$ to the Euclidean
Hessian is available. The same procedures of ARNT can be utilized for
\eqref{prob:arnt} with the approximate Euclidean Hessian $H_k$. An adaptive
structured quasi-Newton method given in \cite{hu2018structured} is presented in Algorithm \ref{alg:struct-QN}.
\begin{algorithm2e}[!htbp] 
	\caption{A structured quasi-Newton method} 
	\label{alg:struct-QN}
	Input an initial guess $X^0 \in \Mcal$. 
	Choose $\tau_0>0$, $0<\eta _1 \le \eta _2 <1 $, $1<  \gamma _1 \le \gamma _2 $.  Set $k = 0$.
	\\
	\While{stopping conditions not met}
	{
		Check the structure of $\nabla^2 f(x_k)$ to see if it can be written in a form as \eqref{eq:struct-obj}.\\
		Construct an approximation $H_k$ by utilizing a quasi-Newton method.\\
		Construct and solve the subproblem \eqref{prob:arnt} (by using the modified CG method or the
		Riemannian gradient type method) to
		obtain a new trial point $z_k$.\\
		Compute the ratio $\rho_k$ via \eqref{eq:tr-ratio}. \\
		Update $x_{k+1}$ from the trial point $z_k$ based on \eqref{eq:update-x}. \\
		Update $\tau_{k}$ according to \eqref{alg:tau-up}. \\
		$k\gets k+1$.\\
	}
\end{algorithm2e}

To explain the differences between the two quasi-Newton algorithms more straightforwardly, we take the HF total energy minimization problem \eqref{prob:hf} as an example. From the calculation in \cite{hu2018structured}, we have the Euclidean gradients
\[ \nabla E_{\ks}(X) = {H_{\ks}(X)}X, \quad \nabla E_{\hf}(X) = H_{\hf}(X)X,  \]
where $H_{\ks}(X) := \half L + V_{\ion} + \sum_l \zeta_l w_lw_l^* + {\Diag}((\Re L^\dag) \rho) + {\Diag}(\mu_{\xc}(\rho)^* e)$ and $H_{\hf}(X) = H_{\ks}(X) + \Vcal(XX^*)$. The Euclidean Hessian of $E_{\ks}$ and $E_{\f}$ along a matrix $U \in \C^{n\times p}$ are 
\[ \begin{aligned}
\nabla^2 E_{\ks}(X)[U] & = {H_{\ks} (X)} U + \Diag\left( \big(\Re L^\dag +
\frac{\partial^2 \epsilon_{\xc}}{\partial \rho^2}e\big)(\bar{X} \odot U + X
\odot \bar{U})e\right) X, \\
\nabla^2 E_{\f}(X)[U] & = {\Vcal(XX^*)}U + \Vcal(XU^* + UX^*) X.
\end{aligned}
\]
%
Since $\nabla^2
E_{\f}(X)$ is significantly more expensive than $\nabla^2 E_{\ks}(X)$, we only
need to approximate $\nabla^2 E_{\f}(X)$. The differences $X_{k}-X_{k-1}$, $\nabla E_{\f}(X_k) - \nabla E_{\f} (X_{k-1}) $ are computed. Then, a quasi-Newton approximation $C_k$ of $\nabla^2 E_{\f}$ is obtained without requiring vector transport. By adding the exact formulation of $\nabla^2 E_{\ks}(X_k)$, we have an approximation $H_k$, i.e.,
\[ H_k = \nabla^2 E_{\ks} +C_k.  \]
A Nystr\"om approximation for $C_k$ is also investigated. 
Note that the spectrum of $\nabla^2 E_{\ks}(X)$ dominates the spectrum of
$\nabla^2 E_{\f}(X)$. The structured approximation $H_k$ is more reliable than a
direct quasi-Newton approximate to $\nabla^2 E_{\hf}(X)$ because the spectrum of
$\nabla^2 E_{\ks}$ is inherited from the exact form. 
The Remaining procedure is to  solve subproblem \eqref{prob:arnt} to update $X_k$.   
\subsection{Stochastic algorithms}
For problems arising from machine learning, the objective function $f$ is often a summation of a finite number of functions $f_i, i = 1, \ldots, m$
\[ f(x) = \sum_{i=1}^m f_i(x). \]
For unconstrained situations, there are many efficient algorithms, such as Adam, Adagrad, RMSProp, Adelta, SVRG, etc. One can refer to \cite{lecun2015deep}. For the case with manifold constraints, combining with retraction operators and vector transport operator, these algorithms can be well generalized. However, in the implementation, due to the considerations of the computational costs of different parts, they may have different versions. Riemannian stochastic gradient method is first developed in \cite{bonnabel2013stochastic}. Later, a class of first-order methods and their accelerations are investigated for geodesically convex optimization in \cite{zhang2016first,liu2017accelerated}. With the help of parallel translation or vector transport, Riemannian SVRG methods are generalized in \cite{zhang2016riemannian,sato2017riemannian}. In consideration of the computational cost of the vector transport, non-vector transport based Riemannian SVRG is proposed in \cite{jiang2017vector}.
Since an intrinsic coordinate system is absent, the coordinate-wise update on
manifold should be further investigated. A compromised approach for Riemannian
adaptive optimization methods on product manifolds are presented in
\cite{becigneul2018riemannian}. 

Here, the SVRG algorithm \cite{jiang2017vector} is taken as an example. 
At the current point $X^{s,k}$, we first calculate the full gradient $\mathcal{G}(X^{s,k})$, then randomly sample a subscript from $1$ to $m$ and use this to construct a stochastic gradient with reduced variance as
$\mathcal{G}(X^{s,k}, \xi_{s,k}) = \nabla f(X^{s,0}) + \big( \nabla f_{i_{s,k} }(X^{s,k}) - \nabla f_{i_{s,k}}(X^{s,0}) \big)$, finally move along this direction  with a given step size to next iteration point
\[ X^{s, k+1} = X^{s,k} -\tau_{s} \xi_{s,k} .\]
For Riemannian SVRG \cite{jiang2017vector}, after obtaining the stochastic gradient with reduced Euclidean variance, it first projects this gradient to the tangent space 
\[ \xi_{s,k} = \mathbf{P}_{T_{X^{s,k}} \Mcal}(\mathcal{G}(X^{s,k} )). \]
Then, the following retraction step 
\[ X^{s, k+1} = R_{X^{s,k}}( -\tau_{s} \xi_{s,k} ),\]
is executed to get the next feasible point. The detailed version is outlined in Algorithm \ref{alg:usvrg}. 
 \begin{algorithm2e}[t] \label{alg:usvrg}
 \caption{Riemannian SVRG \cite{jiang2017vector}}
 \For{$s = 0, \ldots, S-1$}{
 calculates the full gradient $\nabla f(X^{s,0})$ and sets the step size $\tau_s > 0$. \\
 \For{$k = 0, \ldots, K-1$}{
  Randomly substitute samples get the subscript $i_{s,k} \subseteq \{1, \ldots, m\}$.
  Calculate a random Euclidean gradient $\mathcal{G}(X^{s,k})$
 \bee \label{equ:sum:procedure:SFO}
 \mathcal{G}(X^{s,k}, \xi_{s,k}) = \nabla f(X^{s,0}) + \big( \nabla f_{i_{s,k} }(X^{s,k}) - \nabla f_{i_{s,k}}(X^{s,0})\big).
 \eee
 Calculate a random Riemann gradient
 \[ \xi_{s,k} = \mathbf{P}_{T_{X^{s,k}} \Mcal}(\mathcal{G}(X^{s,k} )). \]
 Update $X^{s,k+1}$ in the following format
 \bee \label{equ:Xk:update:usvrg}
 X^{s, k+1} = R_{X^{s,k}}( -\tau_{s} \xi_{s,k} ).
 \eee
}
 {Take $X^{s+1,0} \gets X^{s,K}$.}}
 \end{algorithm2e}

\subsection{Algorithms for Riemannian non-smooth optimization}
As shown in \cref{subsec:nonsmooth-spca,subsec:nonsmooth-rlmc,subsec:nonsmooth-scsbd,subsec:cons-nnpca,subsec:cons-kmeans}, many practical problems are with non-smooth objective function and manifold constraints, i.e.,
\[ \min_{x \in \M} \quad f(x):= g(x) + h(x), \]
where $g$ is smooth and $h$ is non-smooth. 
Riemannian subgradient methods \cite{dirr2007nonsmooth,borckmans2014riemannian} are firstly investigated to solve this kind of problems and their convergence analysis is exhibited in \cite{hosseini2015convergence} with the help of Kurdyka-\L ojasiewicz (K\L) inequalities. For locally Lipschitz functions on Riemannian manifolds, a gradient sampling method and a non-smooth Riemannian trust-region method are proposed in \cite{grohs2015nonsmooth,hosseini2017riemannian}. 
Proximal gradient methods on manifold are presented in
\cite{bacak2016second,de2016new}, where the inner subproblem is solved inexactly
by subgradient type methods. The corresponding complexity analysis is given in
\cite{bento2011convergence,bento2017iteration}. Different from the constructions
of the subproblem in \cite{bacak2016second,de2016new}, a more tractable
subproblem without manifold constraints is investigated in
\cite{chen2018proximal} for convex $h(x)$. By utilizing the semi-smooth Newton
method \cite{xiao2018regularized}, the proposed proximal gradient method on
manifold enjoys a faster convergence.   Another class of methods is based on
operator-splitting techniques. Some variants of the alternating direction method
of multipliers (ADMM) are studied in
\cite{lai2014splitting,kovnatsky2016madmm,wang2019global,zhang2017primal,birgin2018augmented,liu2019simple}.

We briefly introduce the proximal gradient method on manifold \cite{chen2018proximal} here. Assume that the convex function $h$ is Lipschitz continuous. At each iteration $x_k$, the following subproblem is constructed
\be \label{prob:manpg} \begin{aligned}
	\min_d \quad & \iprod{\grad g(x_k)}{d} + \frac{1}{2t} \|d\|_F^2 + h(x_k + d) \\
	\st \quad & d \in T_{x_k} \M,
\end{aligned} \ee  
where $t > 0$ is a step size. Given a retraction $R$, problem \eqref{prob:manpg} can be seen as a first-order approximation of $f(R_{x_k}(d))$ near the zero element $0_{x_k}$ on $T_{x_k} \M$. Specifically, it follows from the definition of the Riemannian gradient that $\grad g(x_k) = \nabla g(R_{x_k}(0))$. From the Lipschitz continuous property of $h$ and the definition of $R$, we have 
\[ | h(R_{x_k}(d)) - h(x_k+d) | \leq L_h \| R_{x_k}(d) - (x_k +d) \|_F = O(\|d\|_F^2),  \]
where $L_h$ is the Lipschitz constant of $h$. Therefore, we conclude
\[ f(R_{x_k}(d)) = \iprod{\grad g(x_k)}{d} + h(x_k + d) + O(\|d\|_F^2),\;\; d \rightarrow 0.   \]
Then the next step is to solve \eqref{prob:manpg}. Since \eqref{prob:manpg} is
convex and with linear constraints, the KKT conditions are sufficient and
necessary for the global optimality. Specifically, we have 
\[ d(\lambda) = \mathrm{prox}_{th}(b(\lambda)) -x_k, \; \; b(\lambda) = x_k - t(\grad f(x_k) - \mathcal{A}_k^*(\lambda)), \;\; \mathcal{A}_k(d(\lambda)) = 0,  \]
where $d \in T_{x_k} \M$ is represented by $\mathcal{A}_k(d) = 0$ with a linear operator $\mathcal{A}_k$, $\mathcal{A}_k^*$ is the adjoint operator of $\mathcal{A}_k$. Define $E(\lambda) := \mathcal{A}_k(d(\lambda))$, it is proved in \cite{chen2018proximal} that $E$ is monotone and then the semi-smooth Newton method in \cite{xiao2018regularized} is utilized to solve the nonlinear equation $E(\lambda) = 0$ to obtain a direction $d_k$. Combining with a curvilinear search along $d_k$ with $R_{x_k}$, the decrease on $f$ is guaranteed and the global convergence is established. 
%

\subsection{Complexity Analysis}
The complexity analysis of the Riemannian gradient method and the Riemannian
trust region method has been studied in \cite{boumal2016global}. Similar to the
Euclidean unconstrained optimization, the Riemannian gradient method (using a
fixed step size or Armijo curvilinear search) converges to $\|\grad f(x)\| \leq
\varepsilon$ up to $O(1/\varepsilon^2)$ steps. Under mild assumptions, a
modified Riemannian trust region method converges to $\|\grad f(x)\| \leq
\varepsilon, \; \hess f(x) \succeq - \sqrt{\varepsilon} I$ at most $O(\max\{
1/\varepsilon^ {1.5}, 1/ \varepsilon^{2.5} \})$ iterations. For objective
functions with multi-block convex but non-smooth terms, an ADMM of complexity of $O(1/\varepsilon^4)$ is proposed in \cite{zhang2017primal}.
For the cubic regularization methods on the Riemannian manifold, recent studies \cite{zhang2018cubic,agarwal2018adaptive} show a convergence to $\|\grad f(x)\| \leq \varepsilon, \; \hess f(x) \succeq - \sqrt{\varepsilon} I$ with complexity of $O(1/\varepsilon^{1.5 })$.

\section{Analysis for manifold optimization} \label{sec:analysis}

\subsection{Geodesic convexity}
For a convex function in the Euclidean space, any local minima is also a global minima.
An interesting extension is the geodesic convexity of functions. Specifically, a function defined on manifold is said to be geodesically convex if it is convex along any geodesic.
 Similarly, a local minima of a geodesically convex function on manifold is also a global minima. 
Naturally, a question is how to distinguish the geodesically convex function. 
\begin{definition}
	Given a Riemannian manifold $(\Mcal, g)$, a set $ \mathcal{K} \subset \Mcal$ is called $g$-fully geodesic, if for any $p,q \in \mathcal{ K}$, any geodesic $\gamma_{pq}$ is located entirely in $\mathcal{K}$.
\end{definition}
For example, $D_c := \{ P \in \mathbb{S}^n_{++} ~|~ \det(P) = c \}$ with a
positive constant $c$ is not a convex set in $\R^{n \times n}$, but is a fully geodesic set of Riemannian manifolds ($\mathbb{S}^n_{++}, g$), where the Riemannian metric $g$ at $P$ is $g_P(U,V) := \tr(P^{-1}UP^{-1}V)$.
Now we present the definition of the $g$-geodesically convex function.
\begin{definition}
	Given a Riemannian manifold $(\Mcal, g)$ and a $g$-fully geodesic set $
    \mathcal{K} \subset \Mcal$, a function $f : \mathcal{K} \rightarrow \R$ is
    $g$-geodesically convex if for any $p,q \in \mathcal{K}$ and any geodesic
    $\gamma_{pq}\,:\,[0,1] \rightarrow \mathcal{K}$ connecting $p,q$, it holds: 
	\[  f(\gamma_{pq}(t)) \leq (1-t) f(p) + tf(q), \; \forall t \in [0,1]. \]
\end{definition}
A $g$-fully geodesically convex function may not be convex. For example, $f(x):= (\log x)^2, \, x\in \R_+$ is not convex in the Euclidean space, but is convex with respect to the manifold ($\R_+, g$), where $g_x(u,v):= ux^{-1}v$.

Therefore, for a specific function, it is of significant importance to define a proper Riemannian metric to recognize the geodesic convexity. A natural problem is, given a manifold $\Mcal$ and a smooth function $f: \Mcal \rightarrow \R$, whether there is a metric $g$ such that $f$ is geodesic convex with respective to $g$? It is generally not easy to prove the existence of such a metric. From the definition of the geodesic convexity, we know that if a function has a non-global local minimum, then this function is not geodesically convex for any metric. For more information on geodesic convexity, we refer to \cite{vishnoi2018geodesic}.

\subsection{Convergence of self-consistent field iterations}
In \cite{liu2014convergence,liu2015analysis}, several classical theoretical
problems from KSDFT are studied. Under certain conditions, the equivalence
between KS energy minimization problems and KS equations are established. In
addition, a lower bound of non-zero elements of the charge density is also
analyzed. By treating the KS equation as a fixed point equation with respect to
a potential function, the Jacobian matrix is explicitly derived using the
spectral operator theory and the theoretical properties of the SCF method are
analyzed. It is proved that the second-order derivatives of the
exchange-correlation energy are uniformly bounded if the Hamiltonian has a
sufficiently large eigenvalue gap. Moreover, SCF converges from any initial
point and enjoys a local linear convergence rate. Related results can be found in \cite{dai2017conjugate,zhu2017riemannian, cai2017eigenvector, zhao2015riemannian, bai2018robust, zhang2015maximization}.

Specifically, for the KS equation \eqref{eq:ks}, we define the potential function
\be \label{eq:cV}
V:= \mathbb{V}(\rho) = L^\dagger \rho + \mu_{xc}(\rho)^\top e
\ee
and 
\be \label{eq:H-V} H(V):= \half L + \sum_l \zeta_l w_lw_l^* + V_{ion} +
\diag(V). \ee
Then, we have $H_\ks(\rho)= H(V(\rho))$. From \eqref{eq:ks}, $X$ are the
eigenvectors corresponding to the $p$-smallest eigenvalues of $H(V)$, which is
dependent on $V$. Then, a fixed point mapping for $V$ can be written as
\be\label{eq:V}
V = \mathbb{V}( F_\phi(V)),
\ee
where $F_\phi(V) = \diag(X(V)X(V)^\top)$.
Therefore, each iteration of SCF is to update $V_k$ as 
\be\label{eq:V-SCF}V_{k+1} = \mathbb{V}(F_\phi(V_k)).
\ee
For SCF with a simple charge-mixing strategy, the update scheme can be written as
\be \label{eq:V-simple-mixing}
V_{k+1} = V_k - \alpha (V_k - \mathbb{V}(F_\phi(V_k))),
\ee
where $\alpha$ is an appropriate step size. Under some mild assumptions, SCF converges with a local linear convergence rate. 
\begin{theorem}
	Suppose that $\lambda_{p+1}(H(V)) - \lambda_{p}(H(V)) > \delta, \; \forall
    V$, the second order derivatives of $\epsilon_\xc$ are upper bounded and there is a constant $\theta$ such that $\| L^\dagger +\frac{\partial \mu_{xc}(\rho)}{\partial \rho} e\|_2 \leq \theta,\; \forall \rho \in \R^n $. Let $b_1:= 1 - \frac {\theta}{\delta} > 0$,
	$\{V_k\}$ be a sequence generated by \eqref{eq:V-simple-mixing} with a step size of $\alpha$ satisfying
	\bee \label{eq:gl-conv-alpha}
	0<\alpha<\frac{2}{2-b_1}.
	\eee
	Then, $\{V^i\}$ converges to a solution of the KS equation \eqref{eq:ks}, and its convergence rate is not worse than $|1-\alpha| + \alpha (1-b_1) $.
\end{theorem}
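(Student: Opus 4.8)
The plan is to rewrite the simple-mixing recursion \eqref{eq:V-simple-mixing} as a stationary fixed-point iteration
\[ V_{k+1} = T(V_k), \qquad T(V) := (1-\alpha)V + \alpha\, G(V), \quad G := \mathbb{V}\circ F_\phi, \]
and to prove that $T$ is a global contraction on $\R^n$ with modulus at most $r := |1-\alpha| + \alpha(1-b_1)$; the Banach fixed-point theorem then yields a unique $V_*$ with $V_{k}\to V_*$ geometrically at rate $r$. Since every fixed point of $T$ satisfies $V_* = G(V_*) = \mathbb{V}(F_\phi(V_*))$, it is exactly a solution of the fixed-point form \eqref{eq:V} of the KS equation \eqref{eq:ks}, so convergence and the identification of the limit come together. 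The uniform spectral-gap hypothesis $\lambda_{p+1}(H(V))-\lambda_p(H(V))>\delta$ for all $V$ is used first to guarantee that the $p$-dimensional invariant subspace associated with the smallest eigenvalues of $H(V)=\half L + \sum_l \zeta_l w_lw_l^* + V_{ion} + \Diag(V)$ is isolated for every $V$; hence the spectral projector $X(V)X(V)^\top$, and with it $F_\phi(V)=\diag(X(V)X(V)^\top)$, is a globally well-defined $C^1$ map, so that $G$ and $T$ are $C^1$ on all of $\R^n$ and the contraction modulus may be estimated through $\sup_V\|\nabla T(V)\|$.

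The estimate rests on the chain-rule bound $\|\nabla G(V)\|\le \|\nabla\mathbb{V}(\rho)\|\,\|\nabla F_\phi(V)\|$ evaluated at $\rho=F_\phi(V)$. The first factor is exactly the hypothesis: from \eqref{eq:cV} one has $\nabla\mathbb{V}(\rho)=L^\dagger + \tfrac{\partial \mu_{xc}(\rho)}{\partial\rho}$, the uniform boundedness of the second-order derivatives of $\epsilon_{xc}$ makes $\tfrac{\partial\mu_{xc}}{\partial\rho}$ a bounded symmetric operator, and the assumed bound $\|L^\dagger + \tfrac{\partial\mu_{xc}(\rho)}{\partial\rho}e\|_2\le\theta$ gives $\|\nabla\mathbb{V}(\rho)\|\le\theta$ for all $\rho$. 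The second factor is the crux and the main obstacle. I would compute $\nabla F_\phi(V)$ by first-order perturbation theory of the invariant subspace: perturbing $H(V)$ by $\Diag(u)$ and using $\dot x_i = \sum_{j\ne i}\tfrac{x_j^\top \Diag(u) x_i}{\lambda_i-\lambda_j}x_j$, the occupied--occupied contributions to $\diag(\dot P)$ cancel pairwise and only occupied--virtual terms survive, so that
\[ \iprod{u}{-\nabla F_\phi(V)u} = 2\sum_{i\le p}\sum_{j>p}\frac{(x_j^\top \Diag(u)x_i)^2}{\lambda_j-\lambda_i}\ge 0, \]
which shows $\nabla F_\phi(V)$ is symmetric negative semidefinite. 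Bounding each denominator below by the gap and using $\sum_{i\le p}(x_i)_k^2=\rho_k\le 1$ (the rows of the full orthonormal eigenbasis have unit norm) controls this quadratic form by the gap; the sharp estimate $\|\nabla F_\phi(V)\|\le 1/\delta$ needed to match the stated rate is the content of the spectral-operator analysis of \cite{liu2014convergence}, and making it rigorous and uniform in $V$ — pinning the sharp constant $1/\delta$ rather than a looser multiple of it — is where I expect the real difficulty to lie.

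Combining the two bounds gives $\|\nabla G(V)\|\le\theta/\delta=1-b_1$ and hence $\sup_V\|\nabla T(V)\|\le|1-\alpha|+\alpha(1-b_1)=r$. It then remains only to check that the admissible step sizes force $r<1$: for $\alpha\in(0,1]$ we have $r=1-\alpha b_1<1$ since $\alpha b_1>0$, while for $\alpha\in(1,\tfrac{2}{2-b_1})$ we have $r=\alpha(2-b_1)-1$, which is $<1$ exactly when $\alpha<\tfrac{2}{2-b_1}$, i.e.\ the stated range. Thus $T$ is a contraction on $\R^n$ with modulus $r$, the sequence $\{V_k\}$ converges geometrically to the unique fixed point $V_*$ at rate no worse than $|1-\alpha|+\alpha(1-b_1)$, and $V_*$ solves the KS equation \eqref{eq:ks}. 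Besides the sharp-constant issue above, the only other care needed is the uniform $C^1$ control of $F_\phi$ across all $V$, which again hinges on the uniform spectral gap.
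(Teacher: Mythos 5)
Your proposal is correct and follows essentially the same route as the paper, which states this theorem without its own proof and points to the spectral-operator analysis of \cite{liu2014convergence,liu2015analysis}: recast SCF as the fixed-point iteration $V_{k+1}=(1-\alpha)V_k+\alpha\,\mathbb{V}(F_\phi(V_k))$, bound the Jacobian of $F_\phi$ by $1/\delta$ via first-order eigenspace perturbation under the uniform gap, bound that of $\mathbb{V}$ by $\theta$, and conclude a global contraction with modulus $|1-\alpha|+\alpha(1-b_1)$. The single step you defer to the literature, the sharp constant $1/\delta$ rather than the $2/\delta$ your estimate $\rho_k\le 1$ yields, closes by averaging the two symmetric bounds $\sum_{i\le p,\,j>p}\bigl(x_j^\top\Diag(u)x_i\bigr)^2\le\sum_k u_k^2\rho_k$ and $\le\sum_k u_k^2(1-\rho_k)$, which gives $\bigl|u^\top\nabla F_\phi(V)\,u\bigr|\le\frac{2}{\delta}\cdot\frac{1}{2}\|u\|^2$ uniformly in $V$.
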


\subsection{Pursuing global optimality}
In the Euclidean space, a common way to escape the local minimum is to add white noise to the gradient flow, which leads to a stochastic differential equation
\bee
\mathrm{d} X(t) = -\nabla f(X(t)) \mathrm{d} t + \sigma(t) \mathrm{d} B(t),
\label{eq:sde:rn}
\eee
where $B(t)$ is the standard $n$-by-$p$ Brownian motion.  A generalized noisy
gradient flow on the Stiefel manifold is investigated in \cite{yuan2017global}
\bee
\mathrm{d} X(t) = - \grad f(X(t)) \mathrm{d} t + \sigma(t) \circ \mathrm{d} B_{\Mcal}(t),
\label{eq:sde:stfmfd}
\eee
where $B_{\Mcal}(t)$ is the Brownian motion on the manifold $\Mcal:= \St(n,p)$.
The construction of a Brownian motion is then given in an extrinsic form. Theoretically, it can converge to the global minima by assuming second-order continuity.

\subsection{Community detection}
For community detection problems, a commonly used model is called the degree-correlated stochastic block model (DCSBM). It assumes that there are no overlaps between nodes in different communities. Specifically, the hypothesis node set $[n] =
\{1,...,n\}$ contains $k$ communities, $\{ C_1^*, \ldots, C_k^* \}$ satisfying
$$C_a^*\cap C_b^* = \emptyset,\forall a\neq b \text{ and} \cup_{a=1}^kC_a^* = [n].$$
In DCSBM, the network is a random graph, which can be represented by a matrix with all elements 0 to 1 represented by $B\in \mathbb{S}^{k}$.
Let $A\in\{0,1\}^{n\times n}$ be the adjacency matrix of this network and
$A_{ii}=0, \forall i\in[n]$. Then for $i\in C_a^*, j\in C_b^*, i\neq j$,
$$A_{ij} = \begin{cases}
1, & \mbox{ with probability } B_{ab}\theta_i\theta_j, \\
0, & \mbox{ with probability } 1-B_{ab}\theta_i\theta_j,
\end{cases}$$
where the heterogeneity of nodes is characterized by the vector $\theta$. More specifically, larger $\theta_i$ corresponds to $i$ with more edges connecting other nodes.
For DCSBM, the aforementioned relaxation model \eqref{prob:comm-relax} is proposed in \cite{zhang2017sparse}. By solving \eqref{prob:comm-relax}, an approximation of the global optimal solution can be obtained with high probability.
\begin{theorem}
	Define
	$ G_a=\sum_{i\in C_a^*}{\theta_i}, H_a=\sum_{b=1}^kB_{ab}G_b,
	F_i=H_a\theta_i.$ Let $U^*$ and $\Phi^*$ be global optimal solutions for \eqref{prob:comm-relax} and \eqref{prob:comm}, respectively and define $\Delta = U^*(U^*)^{\top}-\Phi^*(\Phi^*)^{\top}$.
	Suppose that
	$\max_{1\leq a<b\leq k}\frac{B_{ab}+\delta}{H_aH_b}<\lambda<\min_{1\leq
		a\leq k}\frac{B_{aa}-\delta}{H_a^2}$
	(where $\delta>0$). Then, with high probability, we have
	$$\|\Delta\|_{1,\theta}\leq \frac{C_0}{\delta}\left(1+\left(\max_{1\leq a\leq k}\frac{B_{ Aa}}{H_a^2}\|f\|_1\right)\right)(\sqrt{n\|f\|_1}+n).$$
\end{theorem}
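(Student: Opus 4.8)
The statement is an approximate-recovery guarantee for the nonnegative sphere-constrained relaxation \eqref{prob:comm-relax}, and I would prove it by a \emph{basic-inequality} argument of the kind standard in the analysis of relaxations for the stochastic block model. The starting point is feasibility of the ground-truth assignment $\Phi^*$ for the relaxed problem: each row of $\Phi^*$ is a community indicator, so it satisfies the unit-norm, $1$-sparse and nonnegativity constraints of \eqref{prob:comm-relax}. Optimality of $U^*$ then yields the basic inequality $\iprod{C}{U^*(U^*)^\top} \leq \iprod{C}{\Phi^*(\Phi^*)^\top}$, i.e.\ $\iprod{C}{\Delta} \leq 0$ with $\Delta = U^*(U^*)^\top - \Phi^*(\Phi^*)^\top$. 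The whole argument reduces to turning this single scalar inequality into a bound on $\|\Delta\|_{1,\theta}$.

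\textbf{Step 1: population separation.} I would first split $C = -(A - \lambda dd^\top)$ into its conditional mean and a fluctuation, $C = \E[C] + W$. Under the DCSBM one computes $\E[A_{ij}] = B_{ab}\theta_i\theta_j$ and, crucially, $\E[d_i] = \theta_i\sum_b B_{ab}G_b = H_a\theta_i = F_i$, so the population profit matrix has entries $M_{ij} := \theta_i\theta_j(B_{ab} - \lambda H_aH_b)$ for $i\in C_a^*,\, j\in C_b^*$. The two-sided gap hypothesis is exactly what makes $M$ separate the communities with margin $\delta$: the bound $\lambda < (B_{aa}-\delta)/H_a^2$ forces $M_{ij} > \delta\,\theta_i\theta_j$ on within-community pairs, while $\lambda > (B_{ab}+\delta)/(H_aH_b)$ forces $M_{ij} < -\delta\,\theta_i\theta_j$ on between-community pairs. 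Tracking signs entrywise (on a within-community pair $\Delta_{ij}\leq 0$, on a between-community pair $\Delta_{ij}\geq 0$, because $U^*(U^*)^\top$ has entries in $[0,1]$) gives the deterministic separation
\[ \iprod{\E[C]}{\Delta} = \iprod{-M}{\Delta} \geq \delta \sum_{ij} \theta_i\theta_j |\Delta_{ij}| = \delta\,\|\Delta\|_{1,\theta}, \]
up to lower-order diagonal corrections, which matches the weighted norm in the statement.

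\textbf{Step 2: controlling the noise.} Combining Step 1 with the basic inequality gives $\delta\,\|\Delta\|_{1,\theta} \leq \iprod{\E[C]}{\Delta} \leq |\iprod{W}{\Delta}|$, so it remains to bound $\iprod{W}{\Delta} = -\iprod{A - \E A}{\Delta} + \lambda\iprod{dd^\top - \E[dd^\top]}{\Delta} + (\text{corrections from }\lambda)$. Here I would use matrix concentration: a Bernstein/Bennett bound for the bilinear form in the independent Bernoulli entries $A_{ij}$, whose variance proxy is governed by $\E A_{ij}$ and hence by the expected total degree $\|f\|_1 = \sum_i F_i$. The leading fluctuation scales like $\sqrt{n\,\|f\|_1}$, the heterogeneous-degree analogue of the usual $\sqrt{np}$ operator-norm bound for $A - \E A$; the additive $n$ collects the bias terms from the degree correction $dd^\top$ and from replacing the data-dependent normalization $\lambda = 1/\|d\|_2$ by its population value, and the prefactor $\max_a B_{aa}/H_a^2$ enters through the relative size of the diagonal blocks of $M$. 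Dividing through by $\delta$ then produces the claimed bound and explains the $C_0/\delta$ factor.

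\textbf{Main obstacle.} The delicate point is Step 2: one must bound $\iprod{W}{\Delta}$ by a quantity \emph{independent of} $\Delta$ (the fixed scale $\sqrt{n\|f\|_1}+n$) rather than by $\|W\|_{\mathrm{dual}}\|\Delta\|_{1,\theta}$, which would only give exact recovery under an exact-gap condition. This requires uniform control of the bilinear form over the feasible difference set — either via a bound on $\|A-\E A\|$ paired with a trace/nuclear-norm estimate on $\Delta$, or via a union bound over partition structures — carried out so that the constant does not swamp the margin $\delta$. A second nuisance is the coupling between the random weight $\lambda$ and $A$; I would handle it by showing $\|d\|_2$ concentrates around its mean and absorbing the discrepancy into the additive $n$ term. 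The remaining ingredients (the population computation of $\E[A]$ and $\E[d]$, and the entrywise sign bookkeeping) are routine.
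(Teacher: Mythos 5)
First, a caveat about the comparison itself: the survey states this theorem without proof --- it is quoted (with some typos) from \cite{zhang2017sparse} --- so there is no in-paper argument to check you against. Judged against the argument in that line of work (the Gu\'edon--Vershynin-style analysis of convexified modularity maximization for DCSBM, which the cited source follows), your outline is essentially the right proof: basic inequality from feasibility of a membership matrix in \eqref{prob:comm-relax}, the split $C = \E[C] + W$, the margin computation $M_{ij} = \theta_i\theta_j(B_{ab} - \lambda H_a H_b)$ with the two-sided condition on $\lambda$ giving a $\delta\theta_i\theta_j$ separation of the correct sign on each block, the entrywise sign bookkeeping yielding $\iprod{\E[C]}{\Delta} \geq \delta\|\Delta\|_{1,\theta}$ (with $\|\Delta\|_{1,\theta} = \sum_{ij}\theta_i\theta_j|\Delta_{ij}|$, which is the intended meaning of the norm the survey leaves undefined), and a $\Delta$-free uniform bound on the noise at scale $\sqrt{n\|f\|_1}+n$. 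You also correctly identify the crux, namely that $\iprod{W}{\Delta}$ must be bounded by a fixed scale rather than by a dual norm times $\|\Delta\|_{1,\theta}$.

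Two concrete gaps remain. (1) You conflate $\Phi^*$ with the ground-truth assignment, but in the statement $\Phi^*$ is the optimizer of the discrete problem \eqref{prob:comm}; your Step 1 sign analysis needs the support of $\Phi^*(\Phi^*)^\top$ to coincide with the ground-truth blocks $C_a^*$ that define $M$, which is not given. The standard repair: let $Y^*$ be the ground-truth membership matrix, whose factor is feasible for both \eqref{prob:comm} and \eqref{prob:comm-relax}; this yields the two basic inequalities $\iprod{C}{U^*(U^*)^\top} \le \iprod{C}{Y^*}$ and $\iprod{C}{\Phi^*(\Phi^*)^\top} \le \iprod{C}{Y^*}$, your margin-plus-noise argument applies verbatim to each difference (both $U^*(U^*)^\top$ and $\Phi^*(\Phi^*)^\top$ have entries in $[0,1]$), and the triangle inequality gives the claim with $C_0$ doubled. (2) Your Step 2 is a plan rather than a proof, and of the two completions you float, only one delivers the stated rate: the pairing of $\|A-\E A\|_{\mathrm{op}}$ with a nuclear-norm bound $\|\Delta\|_* \le 2n$ requires degree regularization and degrades in the sparse regime, while a ``union bound over partition structures'' does not apply because $U^*$ ranges over a continuous feasible set, not over partitions. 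The tool that works --- and the one used in the cited analysis --- is Grothendieck's inequality: since $\Delta$ is a difference of PSD matrices with diagonal at most one, $|\iprod{A - \E A}{\Delta}| \lesssim K_G \|A - \E A\|_{\infty \to 1}$, and the cut norm concentrates at $C(\sqrt{n\|f\|_1} + n)$ via Bernstein plus a union bound over sign vectors, with no lower bound on degrees needed. The $\lambda(dd^\top - ff^\top)$ term is then handled as you suggest, by concentration of $d$ about $f$, and is where the prefactor $\max_a B_{aa}/H_a^2 \cdot \|f\|_1$ enters. With these two repairs your outline becomes a complete proof.
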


\subsection{Max cut}
Consider the SDP relaxation \eqref{prob:maxcut-sdp} and the non-convex relaxation problem with low rank constraints \eqref{prob:maxcut-noncvxrelax}. If $p \geq \sqrt{2n}$, the composition of a solution $V^*$ of \eqref{prob:maxcut-noncvxrelax}, i.e., $V^*(V^*)^\top$, is always an optimal solution of SDP \eqref{prob:maxcut-sdp} \cite{barvinok1995problems, pataki1998rank, burer2003nonlinear}.
If $p \geq \sqrt{2n}$, for almost all matrices $C$,
problem \eqref{prob:maxcut-noncvxrelax} has a unique local minimum and this minimum is also a global minimum of the original problem
\eqref{prob:maxcut} \cite{boumal2016non}. The relationship between solutions of the two problems \eqref{prob:maxcut-sdp} and \eqref{prob:maxcut-noncvxrelax} is presented in \cite{mei2017solving}. Define $
\mathrm{SDP}(C) = \max\{ \langle C, X\rangle : X \succeq 0, X_{ii} = 1, i \in [n] \}$. A point $V \in \mathrm{Ob}(p,n)$ is called an $\varepsilon$-approximate concave point of \eqref{prob:maxcut-noncvxrelax}, if 
\[ \iprod{U}{\hess f(V)[U]} \leq \varepsilon \|u\|^2, \;\; \forall U \in T_{V} \mathrm{Ob}(p,n). \]
The following theorem tells the approximation quality of an $\varepsilon$-approximate concave point of \eqref{prob:maxcut-noncvxrelax}.  
\begin{theorem}
	\label{thm:apprxconcavepoint}
	For any $\varepsilon$-approximate concave point $V$ of \eqref{prob:maxcut-noncvxrelax}, we have
	\begin{align}
	\tr(CV^\top V) \geq \mathrm{SDP}(C) - \frac{1}{p-1} (\mathrm{SDP}(C) + \mathrm{SDP}(-C)) - \frac{n}{2} \varepsilon.
	\end{align}
\end{theorem}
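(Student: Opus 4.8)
The plan is to certify near-optimality by turning the approximate concavity of $f(V)=\tr(CV^\top V)$ into a lower bound through a judicious family of averaged test directions built from the SDP optimizer. First I would record the Riemannian Hessian quadratic form on the oblique manifold. Writing $V=[\sigma_1,\dots,\sigma_n]$ with $\|\sigma_i\|_2=1$, and using $\nabla f(V)=2VC$, $\nabla^2 f(V)[U]=2UC$ together with the projection $\mathbf{P}_{T_V\mathrm{Ob}(p,n)}(Z)=Z-V\Diag(\diag(V^\top Z))$, a direct computation gives for every tangent $U=[U_1,\dots,U_n]$ (i.e.\ $\langle\sigma_i,U_i\rangle=0$)
\[
\iprod{U}{\hess f(V)[U]}=2\sum_{i,j}C_{ij}\langle U_i,U_j\rangle-2\sum_i\|U_i\|^2 y_i,\qquad y_i:=\sum_k C_{ik}\langle\sigma_i,\sigma_k\rangle .
\]
The $\varepsilon$-approximate concavity then reads $2\sum_{i,j}C_{ij}\langle U_i,U_j\rangle-2\sum_i\|U_i\|^2 y_i\le\varepsilon\sum_i\|U_i\|^2$ for all such $U$, and in particular holds in expectation over any randomized choice of $U$.

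Next I would build the test directions. Let $X^*$ attain $\mathrm{SDP}(C)$, factor $X^*=Y^\top Y$ with $Y=[u_1,\dots,u_n]$ and $\|u_i\|=1$, and let $G$ have i.i.d.\ $N(0,1)$ entries. Define the automatically tangent direction $U_i=(I-\sigma_i\sigma_i^\top)Gu_i$, so that $\langle\sigma_i,U_i\rangle=0$. Using $\E[G^\top MG]=\tr(M)\,I$ for fixed $M$, together with $\tr[(I-\sigma_i\sigma_i^\top)(I-\sigma_j\sigma_j^\top)]=p-2+\langle\sigma_i,\sigma_j\rangle^2$ and $\E\|U_i\|^2=(p-1)X^*_{ii}=p-1$, taking expectations in the concavity inequality and dividing by $2$ yields
\[
(p-2)\,\mathrm{SDP}(C)+\sum_{i,j}C_{ij}\langle\sigma_i,\sigma_j\rangle^2 X^*_{ij}-(p-1)\tr(CV^\top V)\le\tfrac12\varepsilon n(p-1).
\]

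The crux is the cross term $\sum_{i,j}C_{ij}\langle\sigma_i,\sigma_j\rangle^2 X^*_{ij}=\iprod{C}{(V^\top V)^{\odot 2}\odot X^*}$. Here I would invoke the Schur product theorem: since $(V^\top V)^{\odot 2}$ and $X^*$ are positive semidefinite, so is their Hadamard product, and its diagonal entries equal $\langle\sigma_i,\sigma_i\rangle^2 X^*_{ii}=1$. Hence $(V^\top V)^{\odot 2}\odot X^*$ is a feasible point of the SDP, giving $\iprod{C}{(V^\top V)^{\odot 2}\odot X^*}\ge\min\{\iprod{C}{X}:X\succeq0,\,X_{ii}=1\}=-\mathrm{SDP}(-C)$. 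Substituting this lower bound into the displayed inequality, isolating $\tr(CV^\top V)$, and using $\tfrac{p-2}{p-1}=1-\tfrac1{p-1}$ produces exactly $\tr(CV^\top V)\ge\mathrm{SDP}(C)-\tfrac1{p-1}(\mathrm{SDP}(C)+\mathrm{SDP}(-C))-\tfrac n2\varepsilon$.

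I expect the main obstacle to be the last step: recognizing that the residual cross term is itself an SDP-feasible inner product. The observation that the entrywise square of a correlation matrix is again positive semidefinite with unit diagonal is what converts an otherwise intractable quartic-in-$V$ quantity into the clean penalty $-\mathrm{SDP}(-C)$, and it is precisely this that avoids a spurious factor $n$ that a cruder route (bounding $\lambda_{\max}(C-\Diag(y))$ by weak duality with a single eigenvector) would incur. The remaining pieces—the Hessian identity, the Gaussian averaging formula $\E[G^\top MG]=\tr(M)I$, and the trace evaluation—are routine, though I would verify the tangency, the constant $p-2+\langle\sigma_i,\sigma_j\rangle^2$, and the normalization $X^*_{ii}=1$ with care, since the exact constants in the bound hinge on them.
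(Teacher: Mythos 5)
The survey states this theorem without its proof, citing \cite{mei2017solving}; your argument is correct and is essentially the argument of that reference: random tangent test directions $U_i=(I-\sigma_i\sigma_i^\top)Gu_i$ built from a unit-column factorization of the SDP optimizer, the second-moment identity $\E[G^\top MG]=\tr(M)I$, and the Schur product theorem certifying that $(V^\top V)^{\odot 2}\odot X^*$ is PSD with unit diagonal, hence SDP-feasible, which converts the quartic cross term into the $-\mathrm{SDP}(-C)$ penalty. I verified the constants: the Riemannian Hessian quadratic form on $\mathrm{Ob}(p,n)$ is $2\sum_{i,j}C_{ij}\langle U_i,U_j\rangle-2\sum_i\|U_i\|^2y_i$ as you state, $\tr[(I-\sigma_i\sigma_i^\top)(I-\sigma_j\sigma_j^\top)]=p-2+\langle\sigma_i,\sigma_j\rangle^2$, $\E\|U_i\|^2=p-1$, and the final rearrangement via $\tfrac{p-2}{p-1}=1-\tfrac{1}{p-1}$ reproduces the stated bound exactly.
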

Another problem with similar applications is the $\mathbb{Z}_2$ synchronization problem \cite{bandeira2016low}. Specifically, given noisy observations $Y_{ij} = z_iz_j + \sigma W_{ij}$, where $W_{i>j} \sim \mathcal{N}(0,1) $ and $W_{ij} = W_{ji}, W_{ii} = 0$, we want to estimate the unknown labels $z_i \in \{\pm 1\}$.
It can be seen as a special case of the max cut problem with $p=2$. The following results are presented in \cite{bandeira2016low}. 
\begin{theorem}
	If $\sigma < \frac{1}{8} \sqrt{n}$, then,
	with a high probability, all second-order stable points $Q$ of problem
    \eqref{prob:maxcut-noncvxrelax} ($p=2$) have the following non-trivial relationship with the true label $z$, i.e., for each such $\sigma$, there is $\varepsilon$ such that
	\[ \frac{1}{n} \|Q^\top z\|_2 \geq \varepsilon.\]
\end{theorem}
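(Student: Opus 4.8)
The plan is to treat \eqref{prob:maxcut-noncvxrelax} with $C=Y$ and $p=2$, writing the second-order stable point as a tuple of unit vectors $q_1,\dots,q_n\in\R^2$ (the rows of $Q$), so that $X:=QQ^\top\in\Sbb^n$ is positive semidefinite of rank at most $2$ with unit diagonal and $z^\top X z=\|Q^\top z\|_2^2$ is exactly the quantity to be bounded below. Writing $Y=zz^\top+\sigma W$ (the diagonal of $Y$ merely shifts the objective by a constant, since $X_{ii}=1$, and may be ignored), I would first specialize the oblique-manifold Riemannian gradient and Hessian formulas of \cref{sec:algorithm} to this setting. First-order stationarity forces $\sum_j Y_{ij}q_j=\mu_i q_i$ with $\mu_i:=\iprod{q_i}{\sum_j Y_{ij}q_j}$, whence $\sum_i\mu_i=\sum_{ij}Y_{ij}X_{ij}=\iprod{Y}{X}=f(Q)$. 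At such a point the Hessian quadratic form simplifies to
\[ \iprod{U}{\hess f(Q)[U]} = 2\Big(\textstyle\sum_{ij}Y_{ij}\iprod{u_i}{u_j}-\sum_i\mu_i\|u_i\|^2\Big) \]
for every tangent $U=(u_1,\dots,u_n)$ with $u_i\perp q_i$; second-order stability (a maximum, hence $\hess f(Q)\preceq 0$) is therefore the assertion that $\sum_{ij}Y_{ij}\iprod{u_i}{u_j}\le\sum_i\mu_i\|u_i\|^2$ for all such $U$.

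The crucial step is to feed a structured direction into this inequality. For a fixed $v\in\R^2$ set $u_i=z_i P_i v$, where $P_i:=I_2-q_iq_i^\top$ is the orthogonal projector onto $q_i^\perp$; this is admissible since $u_i\perp q_i$. Summing the second-order inequality over $v$ ranging in an orthonormal basis of $\R^2$ collapses the $v$-dependence through $\sum_a v_av_a^\top=I_2$, and the identities $\tr(P_iP_j)=\iprod{q_i}{q_j}^2=X_{ij}^2$ and $\tr(P_i)=1$ (valid precisely because $p=2$) yield the clean scalar inequality
\[ \textstyle\sum_{ij}Y_{ij}\,z_iz_j\,X_{ij}^2 \;\le\; \sum_i\mu_i \;=\; \iprod{Y}{X}. \]
Substituting $Y=zz^\top+\sigma W$ and using $(z_iz_j)^2=1$ turns the left-hand side into $\|X\|_{\mathrm F}^2+\sigma\iprod{\tilde W}{X\odot X}$ with $\tilde W_{ij}:=z_iz_jW_{ij}$, while the right-hand side is $\|Q^\top z\|_2^2+\sigma\iprod{W}{X}$. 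Rearranging gives
\[ \|Q^\top z\|_2^2 \;\ge\; \|X\|_{\mathrm F}^2-\sigma\,\iprod{W}{X}+\sigma\,\iprod{\tilde W}{X\odot X}. \]

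To finish I would lower-bound $\|X\|_{\mathrm F}^2$ and control the two noise terms. Since $X\succeq 0$ has rank at most $2$ and $\tr X=n$, its two nonzero eigenvalues sum to $n$, whence $\|X\|_{\mathrm F}^2\ge n^2/2$. Both $X$ and the Schur square $X\odot X$ are positive semidefinite with unit diagonal, so $\|X\|_*=\|X\odot X\|_*=n$; together with $\|\tilde W\|_2=\|W\|_2$ (conjugation by the sign diagonal $\Diag(z)$ preserves the spectrum) this bounds each noise term by $\sigma\|W\|_2\,n$, giving $\|Q^\top z\|_2^2\ge \tfrac{n^2}{2}-2\sigma\|W\|_2\,n$. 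Invoking the standard high-probability spectral estimate $\|W\|_2\le 2\sqrt n$ for the symmetric Gaussian matrix $W$ and writing $\sigma=c\sqrt n$ with $c<\tfrac18$, this reads $\|Q^\top z\|_2^2\ge(\tfrac12-4c)\,n^2$, i.e. $\tfrac1n\|Q^\top z\|_2\ge\sqrt{\tfrac12-4c}=:\varepsilon>0$, which is exactly the claim and pins the threshold at $\sigma<\tfrac18\sqrt n$. I expect the main obstacle to be quantitative rather than structural: the entire argument hinges on the operator-norm bound for $W$, and matching the precise constant $\tfrac18$ requires the sharp estimate $\|W\|_2\le(2+o(1))\sqrt n$ with exponentially small failure probability rather than a crude bound, so some care with the finite-$n$ concentration of $\|W\|_2$ — and with the fact that $\varepsilon\downarrow 0$ as $\sigma\uparrow\tfrac18\sqrt n$ — is needed to render the high-probability statement clean.
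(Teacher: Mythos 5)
Your proposal is correct, and it is essentially \emph{the} proof of this result: the survey states the theorem without proof, citing \cite{bandeira2016low}, and your argument --- stationarity $\sum_j Y_{ij}q_j=\mu_i q_i$ on the product of circles, the test directions $u_i=z_iP_iv$ summed over an orthonormal basis with the $p=2$ identities $\tr(P_i)=1$ and $\tr(P_iP_j)=X_{ij}^2$, the resulting inequality $\sum_{ij}Y_{ij}z_iz_jX_{ij}^2\le\iprod{Y}{X}$, and the bounds $\|X\|_F^2\ge n^2/2$, $\|X\|_*=\|X\odot X\|_*=n$, $\|\tilde W\|_2=\|W\|_2$ --- reconstructs precisely the mechanism of that reference, with the threshold $\tfrac12-4c>0$ correctly pinning the constant $\sigma<\tfrac18\sqrt{n}$. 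The concentration caveat you flag is handled exactly as you suggest: for fixed $\sigma=c\sqrt n$ with $c<\tfrac18$ strictly, the standard bound $\|W\|_2\le(2+\epsilon)\sqrt n$ with failure probability $e^{-c'n}$ absorbs the slack into $\varepsilon(\sigma)>0$, so there is no gap.
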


\subsection{Burer-Monteiro factorizations of smooth semidefinite programs} \label{subsec:sdp}
Consider the following SDP
\be \label{prob:sdp} \min_{X \in\mathbb{S}^{n\times n}} \tr(C X) \quad \st \; \; \mathcal{A}(X)= b,\, X \succeq 0, \ee
where $C \in \mathbb{S}^{n\times n}$ is a cost matrix,  $\mathcal{A}: \mathbb{S}^{n\times n} \rightarrow \R^m$ is a linear operator and $\mathcal{A}(X) = b$  leads to $m$ equality constraints on $X$, i.e, $\tr(A_iX) = b_i \mathrm{~with~} A_i \in \mathbb{S}^{n\times n},\, b \in \R^m, \, i=1,\ldots, m$. Define $\mathcal{C}$ as the constraint set
\[ \mathcal{C} = \{X \in \mathbb{S}^{n\times n}:\mathcal{A}(X) = b,\, X \succeq 0 \}. \]
If $\mathcal{C}$ is compact, it is proved in \cite{barvinok1995problems,pataki1998rank}  that \eqref{prob:sdp} has a global minimum of rank $r$ with $\frac{r(r+1)}{2} \leq m$. This allows to use the Burer-Monteiro factorizations \cite{burer2003nonlinear} (i.e., let $X = YY^\top$ with $Y\in \R^{n\times p},\, \frac{p(p+1)}{2} \geq m$) to solve the following non-convex optimization problem
\be \label{prob:sdp-ncvx}
\min_{Y \in \R^{n\times p}} \tr(CYY^\top) \quad \st \;\; \mathcal{A}(YY^\top) = b. 
\ee
Here, we define the constraint set 
\be \label{eq:smooth-sdp} \Mcal = \M_p = \{Y \in \R^{n\times p} \,:\, \mathcal{A}(YY^\top) = b \} .\ee
Since $\Mcal$ is non-convex, there may exist many non-global local minima of \eqref{prob:sdp-ncvx}. It is claimed in \cite{burer2005local} that each local minimum of \eqref{prob:sdp-ncvx} maps to a global minimum of \eqref{prob:sdp} if $\frac{p(p+1)}{2} > m$. By utilizing the optimality theory of manifold optimization, any second-order stationary point can be mapped to a global minimum of \eqref{prob:sdp} under mild assumptions \cite{boumal2018deterministic}. Note that \eqref{eq:smooth-sdp} is generally not a manifold. When the dimension of the space spanned by $\{A_1Y, \ldots, A_mY\}$, denoted by $\rk{\mathcal{A}}$, is fixed for all $Y$, $\M_p$ defines a Riemannian manifold. Hence, we need the following assumptions. 
\begin{assumption} \label{assum:sdp}
	
	For a given $p$ such that $\Mcal_p$ is not empty, assume at least one of the
    following conditions are satisfied.
	\begin{itemize}
		\item[\rm(SDP.1)] $\{A_1Y, \ldots, A_mY\}$ are linearly independent in $\R^{n\times p}$ for all $Y\in M_p$
		\item[\rm(SDP.2)] $\{A_1Y, \ldots, A_mY\}$ span a subspace of constant dimension in $\R^{n\times p}$
		 for all $Y $ in an open neighborhood of $\M_p \in \R^{n \times p}$. 
	\end{itemize}
\end{assumption}
By comparing the optimality conditions of \eqref{prob:sdp-ncvx} and the KKT conditions of \eqref{prob:sdp}, the following equivalence between \eqref{prob:sdp} and \eqref{prob:sdp-ncvx} is established in \cite[Theorem 1.4]{boumal2018deterministic}.
\begin{theorem}
	Let $p$ be such that $\frac{p(p+1)}{2} > \rk{\mathcal{A}}$. Suppose that Assumption \ref{assum:sdp} holds. For almost any cost matrix $C \in \mathbb{S}^{n\times n}$, if $Y \in \M_p$ satisfies first- and second-order necessary
	optimality conditions for \eqref{prob:sdp-ncvx}, then $Y$ is globally optimal and $X = YY^\top$ is globally optimal
	for \eqref{prob:sdp}.
\end{theorem}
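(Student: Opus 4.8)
The plan is to split the statement into two logically independent pieces and recombine them: a \textbf{deterministic} part asserting that any rank-deficient second-order critical point is globally optimal, and a \textbf{genericity} part asserting that, for almost every $C$, every second-order critical point is rank-deficient. Throughout I use that Assumption \ref{assum:sdp} makes $\M_p$ a Riemannian manifold, so that the first- and second-order necessary conditions of \cref{sec:algorithm} apply, with tangent space $T_Y\M_p = \{\dot Y : \mathcal{A}(Y\dot Y^\top + \dot Y Y^\top) = 0\}$ and normal space spanned by $\{A_1 Y, \ldots, A_m Y\}$ (of constant dimension $\rk\mathcal{A}$).

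First I would extract a dual certificate from first-order stationarity. Since $\nabla f(Y) = 2CY$ and $\grad f(Y) = \mathbf{P}_{T_Y\M_p}(\nabla f(Y))$, stationarity forces $2CY$ into the normal space, i.e.\ there is a multiplier $\mu \in \R^m$ with $CY = \mathcal{A}^*(\mu)Y$. Setting $S := C - \mathcal{A}^*(\mu) \in \Sbb^n$, this reads $SY = 0$. The second-order necessary condition is cleanest via the Lagrangian $L(Y) = \tr(SYY^\top)$: because each constraint is quadratic, the associated Hessian acts as $\dot Y \mapsto 2S\dot Y$, so at the critical point second-order necessity reduces to $\tr(\dot Y^\top S \dot Y) \ge 0$ for all $\dot Y \in T_Y\M_p$.

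The deterministic core is then short. Suppose $\rk(Y) < p$ and choose $0 \ne z \in \R^p$ with $Yz = 0$. For any $y \in \R^n$ the rank-one perturbation $\dot Y = yz^\top$ obeys $Y\dot Y^\top + \dot Y Y^\top = (Yz)y^\top + y(Yz)^\top = 0$, so $\dot Y \in T_Y\M_p$; substituting into the second-order inequality gives $\|z\|^2\, y^\top S y = \tr(\dot Y^\top S \dot Y) \ge 0$ for every $y$, whence $S \succeq 0$. Now $X := YY^\top$ satisfies $X \succeq 0$, $\mathcal{A}(X) = b$, $S = C - \mathcal{A}^*(\mu) \succeq 0$, and $\tr(SX) = \tr(Y^\top S Y) = 0$; these are precisely the primal–dual KKT conditions of the convex program \eqref{prob:sdp}, so $X$ is globally optimal there and $\mu$ is dual optimal. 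Because every feasible $Y'$ of \eqref{prob:sdp-ncvx} yields a feasible $Y'(Y')^\top$ of \eqref{prob:sdp} with equal objective, the optimal value of \eqref{prob:sdp} lower-bounds that of \eqref{prob:sdp-ncvx}; since $\tr(CYY^\top) = \tr(CX)$ attains it, $Y$ is globally optimal.

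The remaining and hardest step is the genericity claim: for almost every $C$, every second-order critical point has rank $< p$. Here I would invoke $\tfrac{p(p+1)}{2} > \rk\mathcal{A}$ together with the Barvinok--Pataki bound \cite{barvinok1995problems,pataki1998rank}, which gives an optimal solution of \eqref{prob:sdp} of rank $r$ with $\tfrac{r(r+1)}{2} \le \rk\mathcal{A} < \tfrac{p(p+1)}{2}$, so $r < p$. The obstacle is to exclude full-rank ($\rk(Y) = p$) second-order critical points for generic $C$: I would parametrize the set of full-rank critical pairs $(Y,C)$ — on which, under (SDP.1), $\mu$ is a smooth function determined by the first-order equation — and apply Sard's theorem (parametric transversality) to the resulting smooth map into $\Sbb^n$, exploiting the $O(p)$ gauge invariance $Y \mapsto YQ$ and the strict inequality $\tfrac{p(p+1)}{2} > \rk\mathcal{A}$ to conclude its image has Lebesgue measure zero. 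The delicate points are establishing the requisite smoothness and regularity of this map and carrying out the dimension count that forces the measure-zero conclusion exactly when $\tfrac{p(p+1)}{2} > \rk\mathcal{A}$; for this I would follow the construction of \cite{boumal2018deterministic}. Combining the two parts, for almost all $C$ every second-order critical point is rank-deficient and hence globally optimal, with $X = YY^\top$ globally optimal for \eqref{prob:sdp}.
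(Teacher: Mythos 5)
First, a calibration point: the paper does not actually prove this theorem — it states it and cites \cite[Theorem 1.4]{boumal2018deterministic}, with only the one-line remark that it follows "by comparing the optimality conditions" of \eqref{prob:sdp-ncvx} with the KKT conditions of \eqref{prob:sdp}. So the relevant comparison is with the argument in that reference. Your \emph{deterministic} half reconstructs that argument correctly and completely: the multiplier extraction $SY=0$ with $S=C-\mathcal{A}^*(\mu)$, the identity $\iprod{\hess f(Y)[\dot Y]}{\dot Y}=2\tr(\dot Y^\top S\dot Y)$ at a critical point (the Lagrangian-Hessian form, valid because the constraints are quadratic), the rank-one tangent perturbation $\dot Y=yz^\top$ with $Yz=0$ forcing $S\succeq 0$, and the duality argument $\tr(CX')=\tr(SX')+\mu^\top b\ge \mu^\top b$ with equality at $X=YY^\top$ are all sound, and no constraint qualification is needed. (Your appeal to Barvinok--Pataki is superfluous for the statement — and would anyway require compactness of the feasible set; the theorem only claims optimality of critical points that exist, so you can drop it.)

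The genuine gap is in your genericity half, and it is not merely an omitted computation: the route you sketch does not close. Run the dimension count on your parametrization of full-rank critical pairs: the parameters are $Y\in\M_p$ of rank $p$ (dimension $np-m$ under (SDP.1)), $S\in\Sbb^n$ with $SY=0$, i.e.\ annihilating the fixed $p$-dimensional column space (dimension $\tfrac{(n-p)(n-p+1)}{2}$), and $\mu\in\R^m$, mapped to $C=S+\mathcal{A}^*(\mu)$. The total is $np+\tfrac{(n-p)(n-p+1)}{2}=\tfrac{n(n+1)}{2}+\tfrac{p(p-1)}{2}$, and quotienting by the $O(p)$ gauge removes exactly $\tfrac{p(p-1)}{2}$, leaving dimension \emph{exactly} $\tfrac{n(n+1)}{2}$ — not strictly less than the ambient dimension. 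So "smooth image of a lower-dimensional manifold is null" does not apply, and, tellingly, the hypothesis $\tfrac{p(p+1)}{2}>\rk\mathcal{A}$ never enters your count. The argument of \cite{boumal2018deterministic} avoids Sard entirely and works already at \emph{first} order: if $Y$ has rank $p$ and is first-order critical, then $S=C-\mathcal{A}^*(\mu)$ kills all of $\mathrm{col}(Y)$, so $\rk S\le n-p$, whence $C$ lies in $\mathcal{A}^*(\R^m)+\{S\in\Sbb^n : \rk S\le n-p\}$. The low-rank locus has codimension $\tfrac{p(p+1)}{2}$ in $\Sbb^n$, so this set has dimension at most $\rk\mathcal{A}+\tfrac{n(n+1)}{2}-\tfrac{p(p+1)}{2}<\tfrac{n(n+1)}{2}$ and is Lebesgue-null. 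The structural point you missed is that $C$ depends on $Y$ only through $\mathrm{col}(Y)$, so the effective quotient is by $\mathrm{GL}(p)$ (dimension $p^2$), not $O(p)$; the difference $p^2-\tfrac{p(p-1)}{2}=\tfrac{p(p+1)}{2}$ is precisely the missing codimension. With this replacement, genericity excludes all full-rank first-order critical points, and second-order information is needed only in your (correct) rank-deficient step.
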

 
\subsection{Little Grothendieck problem with orthogonality constraints}
Given a positive semidefinite matrix $C \in \R^{dn \times dn}$, the little Grothendieck problem with orthogonality constraints can be expressed as
\be \label{prob:groth} \max_{O_1,\ldots, O_d \in \mathcal{O}_d} \sum_{i=1}^n \sum_{j=1}^n \tr(C_{ij}^\top O_i O_j^\top), \ee
where $C_{ij}$ represents the $(i,j)$-th $d \times d$ block of $C$, $\mathcal{O}_d$ is a group of $d \times d$ orthogonal matrices (i.e., $O \in \mathcal{O}_d$ if and only if $O^\top O = OO^\top = I$.) A SDP relaxation of \eqref{prob:groth} is as follows \cite {bandeira2016approximating}
\be
\max_{\substack{ G\in\mathbb{R}^{dn\times dn} \\ G_{ii}=I_{d\times d} , \ G\succeq 0 }} \tr(CG).
\label{eq_OCSDPm}
\ee
For the original problem \eqref{prob:groth}, a randomized approximation algorithm is presented in \cite{bandeira2016approximating}. Specifically, it consists of the following two procedures. 
\begin{itemize}
	\item Let $G$ be a solution to problem \eqref{eq_OCSDPm}. Denote by the Cholesky decomposition $G = LL^\top$. Let $X_i$ be a $d\times (nd)$ matrix such that $L = (X_1^\top,X_2^\top,\ldots, X_n^\top)^\top$.  
	\item Let $R \in \R^{(nd)\times d}$ be a real-valued Gaussian random matrix whose entries are i.i.d.$\Ncal(0,\frac1d)$.
	The approximate solution of the problem \eqref{prob:groth} can be calculated as follows
	\[
	V_i = \mathcal{P}(X_i R),
	\]
	where $\mathcal{P}(Y)=\argmin_{Z\in \mathcal{O}_d}\|Z-Y\|_F$ with $Y \in \R^{d \times d}$. 
\end{itemize}
For the solution obtained in the above way, a constant approximation ratio on the objective function value is shown, which recovers the known $\frac{2}{\pi}$ approximation guarantee for the classical little Grothendieck problem. 
\begin{theorem}\label{lemma_maintheorem}
	Given a symmetric matrix $C\succeq 0$. Let $V_1,\dots,V_n\in \mathcal{O}_d$ be obtained as above. Then
	\[
	\E \left[ \sum_{i=1}^n\sum_{j=1}^n\tr\left(C_{ij}^TV_iV_j^T\right) \right] \geq \alpha(d)^ 2 \max_{O_1,.. .,O_n \in \mathcal{O}_d} \sum_{i=1}^n\sum_{j=1}^n\tr\left(C_{ij}^TO_iO_j^ T\right),
	\]
	where 
	\[ \alpha(d) := \E \left[ \frac1d \sum_{j=1}^d \sigma_j(Z)\right], \]
	$Z \in \R^{d \times d}$ is a Gaussian random matrix whose components i.i.d. $\Ncal(0,\frac1d)$ and $\sigma_j(Z)$ is the $j$-th singular value of $Z$.
\end{theorem}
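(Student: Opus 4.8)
The plan is to run a Grothendieck-type rounding analysis: I would first exhibit \eqref{eq_OCSDPm} as a relaxation of \eqref{prob:groth}, and then show that the randomized polar rounding degrades the objective by at most the factor $\alpha(d)^2$ in expectation, the whole argument reducing to a single inequality in the positive semidefinite order. For the relaxation step, any feasible tuple $O_1,\dots,O_n \in \mathcal{O}_d$ for \eqref{prob:groth} yields a feasible $G$ for \eqref{eq_OCSDPm} by setting $G_{ij} = O_iO_j^\top$: such a $G$ is a Gram matrix, hence $G \succeq 0$, with $G_{ii} = O_iO_i^\top = I_d$, while $\tr(CG) = \sum_{ij}\tr(C_{ij}^\top O_iO_j^\top)$. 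Thus the optimal value $\tr(CG)$ of \eqref{eq_OCSDPm} is at least $\max_{O}\sum_{ij}\tr(C_{ij}^\top O_iO_j^\top)$. Recall $G = LL^\top$ with the $i$-th row block of $L$ equal to $X_i$, so $G_{ij} = X_iX_j^\top$ and in particular $X_iX_i^\top = I_d$. Stacking $\mathcal{V} := (V_1^\top,\dots,V_n^\top)^\top$, one checks by the block-trace identity that the rounded objective equals $\tr(C\,\mathcal{V}\mathcal{V}^\top)$, so its expectation is $\tr(C\hat G)$ with $\hat G := \E[\mathcal{V}\mathcal{V}^\top]$, $\hat G_{ij} = \E[V_iV_j^\top]$. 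Since $C \succeq 0$, it therefore suffices to prove the matrix inequality $\hat G \succeq \alpha(d)^2 G$.

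To obtain this inequality I would introduce the Gaussian images $Y_i := X_iR$ and the stack $\mathcal{Y} := (Y_1^\top,\dots,Y_n^\top)^\top = LR$. Because $X_iX_i^\top = I_d$ and $R$ has i.i.d. $\Ncal(0,1/d)$ entries, a covariance computation shows each $Y_i \overset{d}{=} Z$, so that $\alpha(d) = \frac1d\E[\tr(\mathcal{P}(Z)^\top Z)]$ is exactly the relevant normalization, and bi-invariance of the Gaussian law under left/right multiplication by orthogonal matrices forces $\E[\mathcal{P}(Z)Z^\top] = \alpha(d) I_d$ (this matrix commutes with every orthogonal matrix and has trace $d\alpha(d)$). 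For the cross term I would split $R$ along the orthogonal projector $P_i := X_i^\top X_i$ onto the row space of $X_i$: then $Y_i = X_iP_iR$ depends on $R$ only through $P_iR$, whereas $X_j(I-P_i)R$ is an independent, mean-zero Gaussian; writing $Y_j = X_jX_i^\top Y_i + X_j(I-P_i)R$ and using independence gives $\E[\mathcal{P}(Y_i)Y_j^\top] = \E[\mathcal{P}(Y_i)Y_i^\top]\,X_iX_j^\top = \alpha(d)\,G_{ij}$, i.e. $\E[\mathcal{V}\mathcal{Y}^\top] = \alpha(d)\,G$. Finally $\E[RR^\top] = I_{nd}$ yields $\E[\mathcal{Y}\mathcal{Y}^\top] = LL^\top = G$.

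These moments assemble into a joint second-moment matrix that is automatically positive semidefinite as an expectation of an outer product:
\[ \E \begin{pmatrix} \mathcal{V} \\ \mathcal{Y} \end{pmatrix}\begin{pmatrix} \mathcal{V}^\top & \mathcal{Y}^\top \end{pmatrix} = \begin{pmatrix} \hat G & \alpha(d) G \\ \alpha(d) G & G \end{pmatrix} \succeq 0. \]
The generalized Schur complement with respect to the block $G$, together with $G G^\dagger G = G$, then gives $\hat G \succeq \alpha(d)^2 G$. Combining with $C \succeq 0$ produces $\E[\sum_{ij}\tr(C_{ij}^\top V_iV_j^\top)] = \tr(C\hat G) \geq \alpha(d)^2\tr(CG) \geq \alpha(d)^2\max_{O}\sum_{ij}\tr(C_{ij}^\top O_iO_j^\top)$, which is the claim. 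I expect the main obstacle to be the cross-moment identity $\E[\mathcal{P}(X_iR)(X_jR)^\top] = \alpha(d)X_iX_j^\top$: one must argue rigorously that the polar factor $\mathcal{P}(Y_i)$ is measurable with respect to the projection $P_iR$ and independent of the complementary Gaussian component, and separately pin down $\E[\mathcal{P}(Z)Z^\top] = \alpha(d)I_d$ via rotational invariance; the Schur-complement passage merely needs care with the pseudoinverse when $G$ is rank-deficient, and the remaining steps are routine.
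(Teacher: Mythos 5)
Your proof is correct. A caveat on the comparison: this survey states the theorem without any proof, citing \cite{bandeira2016approximating}, so there is no in-paper argument to match against; your proposal essentially reconstructs the argument of that reference. In particular, the cross-moment identity $\E[\mathcal{P}(X_iR)(X_jR)^\top]=\alpha(d)\,X_iX_j^\top$ --- obtained by splitting $R$ along the projector $P_i=X_i^\top X_i$, observing that $\mathcal{P}(X_iR)$ is a measurable function of $P_iR$ (well defined a.s.\ since $X_iR$ is almost surely invertible) and independent of the mean-zero remainder, together with the invariance argument giving $\E[\mathcal{P}(Z)Z^\top]=\alpha(d)I_d$ --- is exactly the key lemma of the original rounding analysis, and your Schur-complement passage is equivalent to their expansion of $\tr\bigl(C\,\E[(\mathcal{V}-\alpha(d)\mathcal{Y})(\mathcal{V}-\alpha(d)\mathcal{Y})^\top]\bigr)\geq 0$. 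One small simplification you may want: the pseudoinverse and range-condition bookkeeping can be bypassed entirely by testing the PSD joint second-moment matrix against vectors of the form $\bigl(u^\top,\,-\alpha(d)u^\top\bigr)^\top$, which gives $u^\top \hat G u \geq \alpha(d)^2\, u^\top G u$ for all $u$ directly, after which $\tr(C(\hat G-\alpha(d)^2G))\geq 0$ follows from $C\succeq 0$ as you say.
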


\section{Conclusions}
Manifold optimization has been extensively studied in literatures. We review the
definition of manifold optimization, a few related applications, algorithms and
analysis. However, there are still many issues and challenges.  Many
manifold optimization problems that can be effectively solved are still limited
 to relatively simple structures such as orthogonal constraints, rank
constraints and etc. For other manifolds with complicated structures, what are
the most efficient choices of Riemannian metrics and retraction operators are
 not obvious. Another interesting topic is to combine the manifold structure with the characteristics of specific problems and applications, such as graph-based data analysis, real-time data flow analysis, biomedical image analysis, etc. 
 Nonsmooth problems appear to be more and more attractive.  

\bibliographystyle{siamplain}
\bibliography{optimization}

\begin{thebibliography}{100}

\bibitem{AbsilBakerGallivan2007}
{\sc P.-A. Absil, C.~G. Baker, and K.~A. Gallivan}, {\em Trust-region methods
  on {R}iemannian manifolds}, Found. Comput. Math., 7 (2007), pp.~303--330.

\bibitem{opt-manifold-book}
{\sc P.-A. Absil, R.~Mahony, and R.~Sepulchre}, {\em Optimization algorithms on
  matrix manifolds}, Princeton University Press, Princeton, NJ, 2008.

\bibitem{absil2012projection}
{\sc P.-A. Absil and J.~Malick}, {\em Projection-like retractions on matrix
  manifolds}, SIAM Journal on Optimization, 22 (2012), pp.~135--158.

\bibitem{agarwal2018adaptive}
{\sc N.~Agarwal, N.~Boumal, B.~Bullins, and C.~Cartis}, {\em Adaptive
  regularization with cubics on manifolds with a first-order analysis}, arXiv
  preprint arXiv:1806.00065,  (2018).

\bibitem{bacak2016second}
{\sc M.~Bac{\'a}k, R.~Bergmann, G.~Steidl, and A.~Weinmann}, {\em A second
  order nonsmooth variational model for restoring manifold-valued images}, SIAM
  Journal on Scientific Computing, 38 (2016), pp.~A567--A597.

\bibitem{bai2018robust}
{\sc Z.~Bai, D.~Lu, and B.~Vandereycken}, {\em Robust {R}ayleigh quotient
  minimization and nonlinear eigenvalue problems}, SIAM J. Sci. Comput., 40
  (2018), pp.~A3495--A3522.

\bibitem{bandeira2016low}
{\sc A.~S. Bandeira, N.~Boumal, and V.~Voroninski}, {\em On the low-rank
  approach for semidefinite programs arising in synchronization and community
  detection}, in Conference on Learning Theory, 2016, pp.~361--382.

\bibitem{bandeira2016approximating}
{\sc A.~S. Bandeira, C.~Kennedy, and A.~Singer}, {\em Approximating the little
  grothendieck problem over the orthogonal and unitary groups}, Mathematical
  programming, 160 (2016), pp.~433--475.

\bibitem{barvinok1995problems}
{\sc A.~I. Barvinok}, {\em Problems of distance geometry and convex properties
  of quadratic maps}, Discrete \& Computational Geometry, 13 (1995),
  pp.~189--202.

\bibitem{becigneul2018riemannian}
{\sc G.~B{\'e}cigneul and O.-E. Ganea}, {\em {R}iemannian adaptive optimization
  methods}, arXiv preprint arXiv:1810.00760,  (2018).

\bibitem{bento2011convergence}
{\sc G.~Bento, J.~Neto, and P.~Oliveira}, {\em Convergence of inexact descent
  methods for nonconvex optimization on {R}iemannian manifolds}, arXiv preprint
  arXiv:1103.4828,  (2011).

\bibitem{bento2017iteration}
{\sc G.~C. Bento, O.~P. Ferreira, and J.~G. Melo}, {\em Iteration-complexity of
  gradient, subgradient and proximal point methods on {R}iemannian manifolds},
  Journal of Optimization Theory and Applications, 173 (2017), pp.~548--562.

\bibitem{birgin2018augmented}
{\sc E.~G. Birgin, G.~Haeser, and A.~Ramos}, {\em Augmented lagrangians with
  constrained subproblems and convergence to second-order stationary points},
  Computational Optimization and Applications, 69 (2018), pp.~51--75.

\bibitem{bonnabel2013stochastic}
{\sc S.~Bonnabel}, {\em Stochastic gradient descent on {R}iemannian manifolds},
  IEEE Transactions on Automatic Control, 58 (2013), pp.~2217--2229.

\bibitem{borckmans2014riemannian}
{\sc P.~B. Borckmans, S.~E. Selvan, N.~Boumal, and P.-A. Absil}, {\em A
  {R}iemannian subgradient algorithm for economic dispatch with valve-point
  effect}, Journal of Computational and Applied Mathematics, 255 (2014),
  pp.~848--866.

\bibitem{boumal2016global}
{\sc N.~Boumal, P.-A. Absil, and C.~Cartis}, {\em Global rates of convergence
  for nonconvex optimization on manifolds}, IMA J. Numer. Anal.,  (2016).

\bibitem{boumal2016non}
{\sc N.~Boumal, V.~Voroninski, and A.~Bandeira}, {\em The non-convex
  {Burer-Monteiro} approach works on smooth semidefinite programs}, in Advances
  in Neural Information Processing Systems, 2016, pp.~2757--2765.

\bibitem{boumal2018deterministic}
{\sc N.~Boumal, V.~Voroninski, and A.~S. Bandeira}, {\em Deterministic
  guarantees for {Burer-Monteiro} factorizations of smooth semidefinite
  programs}, arXiv preprint arXiv:1804.02008,  (2018).

\bibitem{burer2003nonlinear}
{\sc S.~Burer and R.~D. Monteiro}, {\em A nonlinear programming algorithm for
  solving semidefinite programs via low-rank factorization}, Mathematical
  Programming, 95 (2003), pp.~329--357.

\bibitem{burer2005local}
{\sc S.~Burer and R.~D. Monteiro}, {\em Local minima and convergence in
  low-rank semidefinite programming}, Mathematical Programming, 103 (2005),
  pp.~427--444.

\bibitem{cai2019fast}
{\sc J.-F. Cai, H.~Liu, and Y.~Wang}, {\em Fast rank-one alternating
  minimization algorithm for phase retrieval}, Journal of Scientific Computing,
  79 (2019), pp.~128--147.

\bibitem{cai2017eigenvector}
{\sc Y.~Cai, L.-H. Zhang, Z.~Bai, and R.-C. Li}, {\em On an
  eigenvector-dependent nonlinear eigenvalue problem}, 2017.

\bibitem{cambier2016robust}
{\sc L.~Cambier and P.-A. Absil}, {\em Robust low-rank matrix completion by
  {R}iemannian optimization}, SIAM Journal on Scientific Computing, 38 (2016),
  pp.~S440--S460.

\bibitem{carson2017manifold}
{\sc T.~Carson, D.~G. Mixon, and S.~Villar}, {\em Manifold optimization for
  {K}-means clustering}, in 2017 International Conference on Sampling Theory
  and Applications (SampTA), IEEE, 2017, pp.~73--77.

\bibitem{chen2018proximal}
{\sc S.~Chen, S.~Ma, A.~M.-C. So, and T.~Zhang}, {\em Proximal gradient method
  for manifold optimization}, arXiv preprint arXiv:1811.00980,  (2018).

\bibitem{cho2017riemannian}
{\sc M.~Cho and J.~Lee}, {\em Riemannian approach to batch normalization}, in
  Advances in Neural Information Processing Systems, 2017, pp.~5225--5235.

\bibitem{dai2017conjugate}
{\sc X.~Dai, Z.~Liu, L.~Zhang, and A.~Zhou}, {\em A conjugate gradient method
  for electronic structure calculations}, SIAM Journal on Scientific Computing,
  39 (2017), pp.~A2702--A2740.

\bibitem{de2016new}
{\sc G.~de~Carvalho~Bento, J.~X. da~Cruz~Neto, and P.~R. Oliveira}, {\em A new
  approach to the proximal point method: convergence on general {R}iemannian
  manifolds}, Journal of Optimization Theory and Applications, 168 (2016),
  pp.~743--755.

\bibitem{dirr2007nonsmooth}
{\sc G.~Dirr, U.~Helmke, and C.~Lageman}, {\em Nonsmooth {R}iemannian
  optimization with applications to sphere packing and grasping}, in Lagrangian
  and Hamiltonian Methods for Nonlinear Control 2006, Springer, 2007,
  pp.~29--45.

\bibitem{duchi2011adaptive}
{\sc J.~Duchi, E.~Hazan, and Y.~Singer}, {\em Adaptive subgradient methods for
  online learning and stochastic optimization}, Journal of Machine Learning
  Research, 12 (2011), pp.~2121--2159.

\bibitem{EdelmanAriasSmith1999}
{\sc A.~Edelman, T.~A. Arias, and S.~T. Smith}, {\em The geometry of algorithms
  with orthogonality constraints}, SIAM J. Matrix Anal. Appl., 20 (1999),
  pp.~303--353.

\bibitem{gabay1982minimizing}
{\sc D.~Gabay}, {\em Minimizing a differentiable function over a differential
  manifold}, J. Optim. Theory Appl., 37 (1982), pp.~177--219.

\bibitem{gao2018new}
{\sc B.~Gao, X.~Liu, X.~Chen, and Y.~Yuan}, {\em A new first-order algorithmic
  framework for optimization problems with orthogonality constraints}, SIAM
  Journal on Optimization, 28 (2018), pp.~302--332.

\bibitem{GaoSun2010}
{\sc Y.~Gao and D.~Sun}, {\em A majorized penalty approach for calibrating rank
  constrained correlation matrix problems}, tech. report, National University
  of Singapore, 2010.

\bibitem{grohs2015nonsmooth}
{\sc P.~Grohs and S.~Hosseini}, {\em Nonsmooth trust region algorithms for
  locally {L}ipschitz functions on {R}iemannian manifolds}, IMA Journal of
  Numerical Analysis, 36 (2015), pp.~1167--1192.

\bibitem{hosseini2015convergence}
{\sc S.~Hosseini}, {\em Convergence of nonsmooth descent methods via
  {K}urdyka--{L}ojasiewicz inequality on {R}iemannian manifolds}, Hausdorff
  Center for Mathematics and Institute for Numerical Simulation, University of
  Bonn (2015,(INS Preprint No. 1523)),  (2015).

\bibitem{hosseini2017riemannian}
{\sc S.~Hosseini and A.~Uschmajew}, {\em A {R}iemannian gradient sampling
  algorithm for nonsmooth optimization on manifolds}, SIAM Journal on
  Optimization, 27 (2017), pp.~173--189.

\bibitem{hu2018structured}
{\sc J.~Hu, B.~Jiang, L.~Lin, Z.~Wen, and Y.~Yuan}, {\em Structured
  quasi-{N}ewton methods for optimization with orthogonality constraints},
  arXiv preprint arXiv:1809.00452,  (2018).

\bibitem{hu2016note}
{\sc J.~Hu, B.~Jiang, X.~Liu, and Z.~Wen}, {\em A note on semidefinite
  programming relaxations for polynomial optimization over a single sphere},
  Science China Mathematics, 59 (2016), pp.~1543--1560.

\bibitem{hu2017adaptive}
{\sc J.~Hu, A.~Milzarek, Z.~Wen, and Y.~Yuan}, {\em Adaptive regularized newton
  method for {R}iemannian optimization}, arXiv preprint arXiv:1708.02016,
  (2017).

\bibitem{hu2018adaptive}
{\sc J.~Hu, A.~Milzarek, Z.~Wen, and Y.~Yuan}, {\em Adaptive quadratically
  regularized {N}ewton method for {R}iemannian optimization}, SIAM J. Matrix
  Anal. Appl., 39 (2018), pp.~1181--1207.

\bibitem{huang2013optimization}
{\sc W.~Huang}, {\em Optimization algorithms on {R}iemannian manifolds with
  applications}, PhD thesis, The Florida State University, 2013.

\bibitem{huang2018riemannian}
{\sc W.~Huang, P.-A. Absil, and K.~Gallivan}, {\em A {R}iemannian {BFGS} method
  without differentiated retraction for nonconvex optimization problems}, SIAM
  J. Optim., 28 (2018), pp.~470--495.

\bibitem{huang2015riemannian}
{\sc W.~Huang, P.-A. Absil, and K.~A. Gallivan}, {\em A {R}iemannian symmetric
  rank-one trust-region method}, Math. Program., 150 (2015), pp.~179--216.

\bibitem{huang2017intrinsic}
{\sc W.~Huang, P.-A. Absil, and K.~A. Gallivan}, {\em Intrinsic representation
  of tangent vectors and vector transports on matrix manifolds}, Numerische
  Mathematik, 136 (2017), pp.~523--543.

\bibitem{huang2015broyden}
{\sc W.~Huang, K.~A. Gallivan, and P.-A. Absil}, {\em A {B}royden class of
  quasi-{N}ewton methods for {R}iemannian optimization}, SIAM J. Optim., 25
  (2015), pp.~1660--1685.

\bibitem{IanPor17}
{\sc B.~Iannazzo and M.~Porcelli}, {\em The {R}iemannian {B}arzilai-{B}orwein
  method with nonmonotone line search and the matrix geometric mean
  computation}, IMA Journal of Numerical Analysis, 00 (2017), pp.~1--23.

\bibitem{jiang2015framework}
{\sc B.~Jiang and Y.-H. Dai}, {\em A framework of constraint preserving update
  schemes for optimization on {S}tiefel manifold}, Mathematical Programming,
  153 (2015), pp.~535--575.

\bibitem{jiang2016l_p}
{\sc B.~Jiang, Y.-F. Liu, and Z.~Wen}, {\em L\_p-norm regularization algorithms
  for optimization over permutation matrices}, SIAM Journal on Optimization, 26
  (2016), pp.~2284--2313.

\bibitem{jiang2017vector}
{\sc B.~Jiang, S.~Ma, A.~M.-C. So, and S.~Zhang}, {\em Vector transport-free
  svrg with general retraction for {R}iemannian optimization: Complexity
  analysis and practical implementation}, arXiv preprint arXiv:1705.09059,
  (2017).

\bibitem{jolliffe2003modified}
{\sc I.~T. Jolliffe, N.~T. Trendafilov, and M.~Uddin}, {\em A modified
  principal component technique based on the lasso}, Journal of computational
  and Graphical Statistics, 12 (2003), pp.~531--547.

\bibitem{kass1990nonlinear}
{\sc R.~E. Kass}, {\em Nonlinear regression analysis and its applications}, J.
  Am. Stat. Assoc., 85 (1990), pp.~594--596.

\bibitem{kovnatsky2016madmm}
{\sc A.~Kovnatsky, K.~Glashoff, and M.~M. Bronstein}, {\em Madmm: a generic
  algorithm for non-smooth optimization on manifolds}, in European Conference
  on Computer Vision, Springer, 2016, pp.~680--696.

\bibitem{kressner2014low}
{\sc D.~Kressner, M.~Steinlechner, and B.~Vandereycken}, {\em Low-rank tensor
  completion by {R}iemannian optimization}, BIT Numer. Math., 54 (2014),
  pp.~447--468.

\bibitem{lai2016localized}
{\sc R.~Lai and J.~Lu}, {\em Localized density matrix minimization and
  linear-scaling algorithms}, Journal of Computational Physics, 315 (2016),
  pp.~194--210.

\bibitem{lai2014splitting}
{\sc R.~Lai and S.~Osher}, {\em A splitting method for orthogonality
  constrained problems}, Journal of Scientific Computing, 58 (2014),
  pp.~431--449.

\bibitem{lai2014folding}
{\sc R.~Lai, Z.~Wen, W.~Yin, X.~Gu, and L.~M. Lui}, {\em Folding-free global
  conformal mapping for genus-0 surfaces by harmonic energy minimization},
  Journal of Scientific Computing, 58 (2014), pp.~705--725.

\bibitem{lecun2015deep}
{\sc Y.~LeCun, Y.~Bengio, and G.~Hinton}, {\em Deep learning}, nature, 521
  (2015), p.~436.

\bibitem{li2018near}
{\sc C.~J. Li, M.~Wang, H.~Liu, and T.~Zhang}, {\em Near-optimal stochastic
  approximation for online principal component estimation}, Mathematical
  Programming, 167 (2018), pp.~75--97.

\bibitem{liu2019simple}
{\sc C.~Liu and N.~Boumal}, {\em Simple algorithms for optimization on
  {R}iemannian manifolds with constraints}, arXiv preprint arXiv:1901.10000,
  (2019).

\bibitem{liu2017subspace}
{\sc H.~Liu, J.-F. Cai, and Y.~Wang}, {\em Subspace clustering by (k, k)-sparse
  matrix factorization}, Inverse Problems \& Imaging, 11 (2017), pp.~539--551.

\bibitem{liu2014convergence}
{\sc X.~Liu, X.~Wang, Z.~Wen, and Y.~Yuan}, {\em On the convergence of the
  self-consistent field iteration in {Kohn--Sham} density functional theory},
  SIAM Journal on Matrix Analysis and Applications, 35 (2014), pp.~546--558.

\bibitem{liu2015analysis}
{\sc X.~Liu, Z.~Wen, X.~Wang, M.~Ulbrich, and Y.~Yuan}, {\em On the analysis of
  the discretized {Kohn--Sham} density functional theory}, SIAM Journal on
  Numerical Analysis, 53 (2015), pp.~1758--1785.

\bibitem{liu2013limited}
{\sc X.~Liu, Z.~Wen, and Y.~Zhang}, {\em Limited memory block {K}rylov subspace
  optimization for computing dominant singular value decompositions}, SIAM J.
  Sci. Comput., 35 (2013), pp.~A1641--A1668.

\bibitem{liu2015efficient}
{\sc X.~Liu, Z.~Wen, and Y.~Zhang}, {\em An efficient {Gauss--Newton} algorithm
  for symmetric low-rank product matrix approximations}, SIAM Journal on
  Optimization, 25 (2015), pp.~1571--1608.

\bibitem{liu2017accelerated}
{\sc Y.~Liu, F.~Shang, J.~Cheng, H.~Cheng, and L.~Jiao}, {\em Accelerated
  first-order methods for geodesically convex optimization on {R}iemannian
  manifolds}, in Advances in Neural Information Processing Systems, 2017,
  pp.~4868--4877.

\bibitem{mei2017solving}
{\sc S.~Mei, T.~Misiakiewicz, A.~Montanari, and R.~I. Oliveira}, {\em Solving
  {SDPs} for synchronization and maxcut problems via the {G}rothendieck
  inequality}, arXiv preprint arXiv:1703.08729,  (2017).

\bibitem{montanari2016non}
{\sc A.~Montanari and E.~Richard}, {\em Non-negative principal component
  analysis: Message passing algorithms and sharp asymptotics}, IEEE
  Transactions on Information Theory, 62 (2016), pp.~1458--1484.

\bibitem{NocedalWright06}
{\sc J.~Nocedal and S.~J. Wright}, {\em Numerical Optimization}, Springer
  Series in Operations Research and Financial Engineering, Springer, New York,
  second~ed., 2006.

\bibitem{oja1985stochastic}
{\sc E.~Oja and J.~Karhunen}, {\em On stochastic approximation of the
  eigenvectors and eigenvalues of the expectation of a random matrix}, Journal
  of mathematical analysis and applications, 106 (1985), pp.~69--84.

\bibitem{pataki1998rank}
{\sc G.~Pataki}, {\em On the rank of extreme matrices in semidefinite programs
  and the multiplicity of optimal eigenvalues}, Mathematics of operations
  research, 23 (1998), pp.~339--358.

\bibitem{pulay1980convergence}
{\sc P.~Pulay}, {\em Convergence acceleration of iterative sequences. the case
  of {SCF} iteration}, Chemical Physics Letters, 73 (1980), pp.~393--398.

\bibitem{pulay1982improved}
{\sc P.~Pulay}, {\em Improved {SCF} convergence acceleration}, Journal of
  Computational Chemistry, 3 (1982), pp.~556--560.

\bibitem{qi2011numerical}
{\sc C.~Qi}, {\em Numerical optimization methods on {R}iemannian manifolds},
  PhD thesis, Florida State University, 2011.

\bibitem{Ring2012Optimization}
{\sc W.~Ring and B.~Wirth}, {\em Optimization methods on {R}iemannian manifolds
  and their application to shape space}, SIAM J. Optim., 22 (2012),
  pp.~596--627.

\bibitem{sato2017riemannian}
{\sc H.~Sato, H.~Kasai, and B.~Mishra}, {\em Riemannian stochastic variance
  reduced gradient}, arXiv preprint arXiv:1702.05594,  (2017).

\bibitem{schoen1997lectures}
{\sc R.~M. Schoen and S.-T. Yau}, {\em Lectures on harmonic maps}, vol.~2, Amer
  Mathematical Society, 1997.

\bibitem{seibert2013properties}
{\sc M.~Seibert, M.~Kleinsteuber, and K.~H{\"u}per}, {\em Properties of the
  {BFGS} method on {R}iemannian manifolds}, Mathematical System Theory C
  Festschrift in Honor of Uwe Helmke on the Occasion of his Sixtieth Birthday,
  (2013), pp.~395--412.

\bibitem{shamir2015stochastic}
{\sc O.~Shamir}, {\em A stochastic {PCA} and {SVD} algorithm with an
  exponential convergence rate}, in International Conference on Machine
  Learning, 2015, pp.~144--152.

\bibitem{SimonAbell2010}
{\sc D.~Simon and J.~Abell}, {\em A majorization algorithm for constrained
  correlation matrix approximation}, Linear Algebra Appl., 432 (2010),
  pp.~1152--1164.

\bibitem{singer2011three}
{\sc A.~Singer and Y.~Shkolnisky}, {\em Three-dimensional structure
  determination from common lines in cryo-em by eigenvectors and semidefinite
  programming}, SIAM journal on imaging sciences, 4 (2011), pp.~543--572.

\bibitem{smith1994optimization}
{\sc S.~T. Smith}, {\em Optimization techniques on {R}iemannian manifolds},
  Fields Institute Communications, 3 (1994).

\bibitem{sun2006optimization}
{\sc W.~Sun and Y.~Yuan}, {\em Optimization theory and methods: nonlinear
  programming}, vol.~1, Springer Science \& Business Media, 2006.

\bibitem{toth2017local}
{\sc A.~Toth, J.~A. Ellis, T.~Evans, S.~Hamilton, C.~Kelley, R.~Pawlowski, and
  S.~Slattery}, {\em Local improvement results for {A}nderson acceleration with
  inaccurate function evaluations}, SIAM Journal on Scientific Computing, 39
  (2017), pp.~S47--S65.

\bibitem{udriste1994convex}
{\sc C.~Udriste}, {\em Convex functions and optimization methods on
  {R}iemannian manifolds}, vol.~297, Springer Science \& Business Media, 1994.

\bibitem{ulbrich2015proximal}
{\sc M.~Ulbrich, Z.~Wen, C.~Yang, D.~Klockner, and Z.~Lu}, {\em A proximal
  gradient method for ensemble density functional theory}, SIAM Journal on
  Scientific Computing, 37 (2015), pp.~A1975--A2002.

\bibitem{vandereycken2013low}
{\sc B.~Vandereycken}, {\em Low-rank matrix completion by {R}iemannian
  optimization}, SIAM Journal on Optimization, 23 (2013), pp.~1214--1236.

\bibitem{vishnoi2018geodesic}
{\sc N.~K. Vishnoi}, {\em Geodesic convex optimization: Differentiation on
  manifolds, geodesics, and convexity}, arXiv preprint arXiv:1806.06373,
  (2018).

\bibitem{waldspurger2015phase}
{\sc I.~Waldspurger, A.~d’Aspremont, and S.~Mallat}, {\em Phase recovery,
  maxcut and complex semidefinite programming}, Mathematical Programming, 149
  (2015), pp.~47--81.

\bibitem{wang2019global}
{\sc Y.~Wang, W.~Yin, and J.~Zeng}, {\em Global convergence of admm in
  nonconvex nonsmooth optimization}, Journal of Scientific Computing, 78
  (2019), pp.~29--63.

\bibitem{wei2016guarantees}
{\sc K.~Wei, J.-F. Cai, T.~F. Chan, and S.~Leung}, {\em Guarantees of
  {R}iemannian optimization for low rank matrix recovery}, SIAM Journal on
  Matrix Analysis and Applications, 37 (2016), pp.~1198--1222.

\bibitem{wen2013adaptive}
{\sc Z.~Wen, A.~Milzarek, M.~Ulbrich, and H.~Zhang}, {\em Adaptive regularized
  self-consistent field iteration with exact {H}essian for electronic structure
  calculation}, SIAM J. Sci. Comput., 35 (2013), pp.~A1299--A1324.

\bibitem{wen2016trace}
{\sc Z.~Wen, C.~Yang, X.~Liu, and Y.~Zhang}, {\em Trace-penalty minimization
  for large-scale eigenspace computation}, Journal of Scientific Computing, 66
  (2016), pp.~1175--1203.

\bibitem{wen2013feasible}
{\sc Z.~Wen and W.~Yin}, {\em A feasible method for optimization with
  orthogonality constraints}, Math. Program., 142 (2013), pp.~397--434.

\bibitem{wen2012solving}
{\sc Z.~Wen, W.~Yin, and Y.~Zhang}, {\em Solving a low-rank factorization model
  for matrix completion by a nonlinear successive over-relaxation algorithm},
  Mathematical Programming Computation, 4 (2012), pp.~333--361.

\bibitem{wen2017accelerating}
{\sc Z.~Wen and Y.~Zhang}, {\em Accelerating convergence by augmented
  {Rayleigh--Ritz} projections for large-scale eigenpair computation}, SIAM
  Journal on Matrix Analysis and Applications, 38 (2017), pp.~273--296.

\bibitem{wu2015regularized}
{\sc X.~Wu, Z.~Wen, and W.~Bao}, {\em A regularized newton method for computing
  ground states of {Bose-Einstein} condensates}, arXiv preprint
  arXiv:1504.02891,  (2015).

\bibitem{xiao2018regularized}
{\sc X.~Xiao, Y.~Li, Z.~Wen, and L.~Zhang}, {\em A regularized semi-smooth
  newton method with projection steps for composite convex programs}, Journal
  of Scientific Computing,  (2018), pp.~1--26.

\bibitem{xie2018non}
{\sc T.~Xie and F.~Chen}, {\em Non-convex clustering via proximal alternating
  linearized minimization method}, International Journal of Wavelets,
  Multiresolution and Information Processing, 16 (2018), p.~1840013.

\bibitem{yang2014optimality}
{\sc W.~H. Yang, L.-H. Zhang, and R.~Song}, {\em Optimality conditions for the
  nonlinear programming problems on {R}iemannian manifolds}, Pacific Journal of
  Optimization, 10 (2014), pp.~415--434.

\bibitem{yuan2017global}
{\sc H.~Yuan, X.~Gu, R.~Lai, and Z.~Wen}, {\em Global optimization with
  orthogonality constraints via stochastic diffusion on manifold}, arXiv
  preprint arXiv:1707.02126,  (2017).

\bibitem{zass2007nonnegative}
{\sc R.~Zass and A.~Shashua}, {\em Nonnegative sparse pca}, in Advances in
  neural information processing systems, 2007, pp.~1561--1568.

\bibitem{ZhaHag04}
{\sc H.~Zhang and W.~W. Hager}, {\em A nonmonotone line search technique and
  its application to unconstrained optimization}, SIAM J. Optim., 14 (2004),
  pp.~1043--1056.

\bibitem{zhang2016riemannian}
{\sc H.~Zhang, S.~J. Reddi, and S.~Sra}, {\em {Riemannian SVRG}: Fast
  stochastic optimization on {R}iemannian manifolds}, in Advances in Neural
  Information Processing Systems, 2016, pp.~4592--4600.

\bibitem{zhang2016first}
{\sc H.~Zhang and S.~Sra}, {\em First-order methods for geodesically convex
  optimization}, in Conference on Learning Theory, 2016, pp.~1617--1638.

\bibitem{zhang2017sparse}
{\sc J.~Zhang, H.~Liu, Z.~Wen, and S.~Zhang}, {\em A sparse completely positive
  relaxation of the modularity maximization for community detection}, SIAM J.
  Sci. Comput., 40 (2018), pp.~A3091--A3120.

\bibitem{zhang2017primal}
{\sc J.~Zhang, S.~Ma, and S.~Zhang}, {\em Primal-dual optimization algorithms
  over {R}iemannian manifolds: an iteration complexity analysis}, arXiv
  preprint arXiv:1710.02236,  (2017).

\bibitem{zhang2017subspace}
{\sc J.~Zhang, Z.~Wen, and Y.~Zhang}, {\em Subspace methods with local
  refinements for eigenvalue computation using low-rank tensor-train format},
  Journal of Scientific Computing, 70 (2017), pp.~478--499.

\bibitem{zhang2018cubic}
{\sc J.~Zhang and S.~Zhang}, {\em A cubic regularized {N}ewton's method over
  {R}iemannian manifolds}, arXiv preprint arXiv:1805.05565,  (2018).

\bibitem{zhang2015maximization}
{\sc L.~Zhang and R.~Li}, {\em Maximization of the sum of the trace ratio on
  the {S}tiefel manifold, ii: Computation}, Science China Mathematics, 58
  (2015), pp.~1549--1566.

\bibitem{zhang2014gradient}
{\sc X.~Zhang, J.~Zhu, Z.~Wen, and A.~Zhou}, {\em Gradient type optimization
  methods for electronic structure calculations}, SIAM Journal on Scientific
  Computing, 36 (2014), pp.~C265--C289.

\bibitem{zhang2017global}
{\sc Y.~Zhang, Y.~Lau, H.-w. Kuo, S.~Cheung, A.~Pasupathy, and J.~Wright}, {\em
  On the global geometry of sphere-constrained sparse blind deconvolution}, in
  Proceedings of the IEEE Conference on Computer Vision and Pattern
  Recognition, 2017, pp.~4894--4902.

\bibitem{zhao2015riemannian}
{\sc Z.~Zhao, Z.-J. Bai, and X.-Q. Jin}, {\em A {R}iemannian newton algorithm
  for nonlinear eigenvalue problems}, SIAM Journal on Matrix Analysis and
  Applications, 36 (2015), pp.~752--774.

\bibitem{zhu2017riemannian}
{\sc X.~Zhu}, {\em A {R}iemannian conjugate gradient method for optimization on
  the {S}tiefel manifold}, Computational Optimization and Applications, 67
  (2017), pp.~73--110.

\end{thebibliography}
\end{document}